\RequirePackage{fix-cm}
\documentclass[smallextended]{svjour3}       
\smartqed  
\usepackage{amssymb}
\usepackage{latexsym}
\usepackage{url}
\usepackage{amsmath}
\usepackage{verbatim}
\usepackage{graphicx}
\usepackage{epsf}
\usepackage{enumerate}
\usepackage{geometry}
\usepackage[width=6cm]{caption}
\usepackage{hyperref}
\usepackage{amsfonts}
\usepackage{booktabs}%
\setcounter{MaxMatrixCols}{30}
\providecommand{\U}[1]{\protect\rule{.1in}{.1in}}
\providecommand{\U}[1]{\protect\rule{.1in}{.1in}}
\hypersetup{breaklinks=true,
pagecolor=white,
colorlinks=true,citecolor=black,filecolor=black,linkcolor=black,urlcolor=black
}
\urlstyle{rm}
\newtheorem{alg}{Algorithm}
\newtheorem{condition}{Condition}
\numberwithin{equation}{section}

\newcommand{\R}{\mathbb{R}}

\newcommand{\N}{\mathbb{N}}

%
%
%
%
%
\begin{document}

\titlerunning{Zero-convexity, perturbation resilience, and subgradient projections 
for feasibility-seeking methods}
\title{Zero-Convex Functions, Perturbation Resilience, and Subgradient Projections for Feasibility-Seeking Methods} 

\author{Yair Censor \and Daniel Reem         
}


\institute{Yair Censor \at
              Department of Mathematics, University of Haifa, Mount Carmel, Haifa 3498838, Israel. \\
              \email{yair@math.haifa.ac.il}           
           \and
           Daniel Reem \at
           This work was done while the author was at 
            the Department of Mathematics, University of Haifa, Haifa, Israel (2010-2011), 
            and at IMPA - Instituto Nacional de Matem\'atica Pura e Aplicada,
   Rio de Janeiro, Brazil (2011-2013). Current Address: Instituto de Ci\^encias Matem\'aticas e de Computa\c{c}\~ao (ICMC), University of S\~ao Paulo at S\~ao Carlos, Avenida Trabalhador S\~ao-carlense, 400 - Centro, 
CEP: 13566-590, S\~ao Carlos,  SP, Brazil. \\
\email{dream@icmc.usp.br}
}

\date{Received: November 18, 2012 / Revised: January 6, 2014 / Accepted: May 2, 2014}

\maketitle    

\begin{abstract}
The convex feasibility problem (CFP) is at the core of the modeling of many problems in
various areas of science. 
Subgradient projection methods are important tools for solving the CFP 
 because they enable the use of subgradient calculations 
instead of orthogonal projections onto the individual sets of the problem. 
 Working in a real Hilbert space,  we show that the sequential subgradient projection 
method is perturbation resilient. By this we mean that under appropriate conditions 
the sequence generated by the  method converges weakly, and sometimes also strongly, 
to a point in the intersection of the given subsets of the feasibility problem, despite 
certain perturbations which are allowed in each iterative step. Unlike previous 
works on solving the convex feasibility
problem, the involved functions, which induce the feasibility problem's
subsets, need not be convex. Instead, we allow them to belong to a wider and
richer class of functions satisfying a weaker condition, that we call
\textquotedblleft zero-convexity\textquotedblright. This class, which 
is introduced and discussed here, holds a promise to solve optimization 
problems in various areas, especially in non-smooth and non-convex 
optimization. The relevance of this study to approximate 
minimization and to the recent 
superiorization methodology for constrained optimization is explained.

\keywords{Feasibility problem, nonconvex, perturbations, perturbation resilience, 
separating hyperplane, stability, subdifferential, subgradient projection method,  superiorization, Voronoi function, zero-convexity. 
\\\\{\bf Mathematics Subject Classification 2010.} 90C26, 90C31, 49K40, 90C30.}

\end{abstract}

\section{Introduction}
\label{sec:Intro}

\subsection{Feasibility problems}  
In this paper we investigate, among other things, perturbation resilience of the 
sequential subgradient projection (SSP) method for feasibility-seeking.  
Feasibility-seeking is concerned with solving the
\textit{convex feasibility problem} (CFP), which is, to find a point in the
intersection $C=\cap_{j}C_{j}$ of a family (usually finite) of closed convex
subsets $C_{j}\subseteq\mathbb{R}^{d}$ of the Euclidean space or of a real Hilbert
space. The CFP formalism is at the core of the modeling of many problems in
various areas of mathematics and the physical sciences, among them image
reconstruction, radiation therapy treatment planning, data compression, and
antenna design. See, e.g., \cite{bb96,cccdh11,Combettes1996} for references.
One of the reasons for this is the observation that the solution of a system
of inequalities is nothing but a point in the intersection of the level-sets
of the corresponding functions which induce these inequalities. In particular,
when convex functions are considered, the context is that of the CFP. Feasible
sets represented by a system of inequalities appear frequently in optimization
\cite{BertsekasNedicOzdaglar2003,BorweinLewis-book-2006,Dixit1976,HenrLassCSM2004,Razumichin1987,Rockafellar1970,Tuncel2010}.

\subsection{Perturbation resilience} Perturbation resilience asks how, and by how
much, can the iterates of an algorithm be perturbed at each iterative step
without losing the overall convergence to a solution of the original problem.
Stability of algorithms is a well-known topic in numerical analysis of
algorithms, see, e.g., \cite{BtEgN2009,BS00,higham96}. However, this is
commonly studied in the context of supplying a guarantee that an algorithm
that has such stability is immune to changes that occur in its progress due to
noise, errors, and other disturbances that can cause the algorithm to deviate
from its \textquotedblleft pure\textquotedblright\ mathematical formulation.

Our motivation in studying perturbation resilience comes not only from this classical
context, but also from the recent line of 
research of a new concept called \textit{superiorization.} 
The superiorization principle aims not at finding a feasible point (the
feasibility problem) and not at the quest for a constrained minimum point.
Instead, the declared aim is to seek a feasible point that is
\textquotedblleft better\textquotedblright, i.e., \textit{superior,} over
other reachable feasible points, with respect to a given objective 
function. Superiorization algorithms rely on bounded 
perturbation resilience that gives the user the certificate to perturb the
iterations of an efficient feasibility-seeking method in a way that will steer
the iterates toward a superior solution without losing the guarantee of
convergence to a feasible point. See 
\cite{ButnariuDavidiHermanKazantsev,CensorDavidiHerman,CDHST2013,dhc09,HermanDavidi,hgdc12,scottetal10}
and \cite{DavidiPhD} for more details and  for experimental work 
demonstrating that algorithms can efficiently and usefully perform 
superiorization. 

An additional aspect of perturbation resilience 
 is a greater flexibility that the users of a given algorithm may have. 
 Indeed, once it is proved that the algorithm is perturbation resilient,    
 the users have more freedom  in generating the 
iterative sequence and, in particular, may obtain faster 
convergence by selecting appropriately the perturbation terms. 

\subsection{Subgradient projection methods}\textit{ }The reason for
investigating perturbation resilience of \textit{subgradient projection 
methods}, such as the \textit{cyclic} \textit{subgradient projection} (CSP) 
method of \cite{cl82}, is their advantage in feasibility-seeking. 
Under the commonly used assumption that each of the sets $C_j$ of the CFP 
can be written as the zero-level-set of 
some convex function $g_j$, $j\in J$, namely $C_j=\{x|\,\,g_j(x)\leq 0\}$ 
(as happens in the case of convex inequalities), the advantage is that instead of orthogonal (least Euclidean
distance) projections onto the sets $C_j$, commonly employed
by many other feasibility-seeking algorithms, the subgradient projection 
methods use \textquotedblleft subgradient 
projections\textquotedblright\ . When each set $C_j$ is 
linear (i.e., hyperplanes or half-spaces) or otherwise \textquotedblleft
simple\textquotedblright\ to orthogonally project onto (like balls), then
there is no advantage in using subgradient projections. But in other cases the
subgradient projections are easier to compute than orthogonal projections
since they do not call for the, computationally demanding, inner-loop of least
Euclidean distance minimization, but rather employ the \textquotedblleft
subgradient projection\textquotedblright\ which is merely a step in the 
negative direction of a calculable subgradient of $g_j$ at the current 
iteration; see, e.g.,
\cite{ButnariuCensorGurfilHadar,CensorLent,CensorZenios,IusemMoledo}.  
For a general review on projection
algorithms for the CFP see \cite{bb96} and consult the recent work 
\cite{Cegielski2012}.

\subsection{Current literature} Perturbation resilience of algorithms in
optimization is discussed, under the title of stability, in \cite{BS00} but
many algorithms still await investigation of this feature. The relevant
discussions in \cite{ButnariuDavidiHermanKazantsev,BRZ_InexactBregman,ButnariuReichZaslavsky,Combettes2001,CorvellecFlam,IusemOtero,Kiwiel2004,NedicBertsekas2010,OstrowskiStability,PRZ2008,PRZ2009,SolodovZavriev1998} are about
feasibility-seeking projection methods or about the incremental 
method that use orthogonal projections whose nonexpansivity (or related
properties) often plays an important role in the convergence proofs. 
Since subgradient operators are usually not nonexpansive, proofs of convergence 
of the corresponding methods should use different properties.

Currently available theorems on perturbation resilience of iterative
feasibility-seeking projection methods are for methods that employ orthogonal
(least Euclidean distance) projections onto convex sets. To the best of our
knowledge, with the exception of the work of De Pierro and Iusem  \cite{DePierroIusem1988} 
and of Combettes \cite{Combettes2001}, perturbation 
resilience of the subgradient projection method for solving the feasibility
problem has not been dealt with in the literature. 

The perturbations  
considered in \cite{DePierroIusem1988} are different from those that we consider.  
The setting is a finite-dimensional space, convex functions, almost 
cyclic control, and a Slater-type condition is imposed on the functions $g_j$ which 
induce the subsets $C_j$. 

The work of Combettes describes a 
general framework for dealing with some optimization algorithms involving a
generalization of Fej\'{e}r-monotonicity in their convergence analysis, in
which perturbations of the type we consider are allowed 
\cite[Section 4]{Combettes2001}. However, 
neither our Theorem \ref{thm:resiliency} follows from \cite{Combettes2001} nor
do the results of \cite{Combettes2001} follow from ours (e.g., because, on 
the one hand Combettes considers 
only convex functions, while we allow more general functions, but on the other hand, 
he also considers operators beyond the subgradient operator for 
convex functions, such as nonexpansive operators). Nonetheless, 
Theorem \ref{thm:resiliency} below generalizes 
the related result \cite[Corollary 6.10(i)]{BauschkeCombettes2001} from 
the setting of convex functions without perturbations to zero-convex functions 
with perturbations. 

A common assumption in many works regarding the feasibility problem is the
convexity of the functions whose level-sets define the subsets $C_{j}$ (thus
the name CFP). When this assumption is removed, the corresponding convergence
results are quite weak (local convergence or convergence of subsequences) see,
e.g., \cite{CorvellecFlam}. The only strong (global, but without
perturbations) convergence result that we are aware of is
\cite{CensorSegal2006} in which the convexity is replaced by the concept of
quasiconvexity (i.e., $f(\alpha x+(1-\alpha)y)\leq\max\{f(x),f(y)\}$ for all
$x,y$ and all $\alpha\in\lbrack0,1]$) along with a strong continuity condition
(H\"{o}lder or Lipschitz) of the involved functions; the setting there is a
finite-dimensional Euclidean space and the algorithm is a kind of a 
subgradient projection method (with star-subdifferentials \cite{Penot1998}).

\subsection{The class of zero-convex functions}\label{subsec:ZeroConvex} A variant of our method, namely
the cyclic subgradient projection (CSP) method (for functions defined on the
whole space), was previously discussed in \cite{cl82}, \cite[Theorem 
5.3.1]{CensorZeniosBook} in a finite-dimensional Euclidean space, for finitely
many convex functions and without perturbations. See also \cite{bb96} for a
Hilbert space treatment. In contrast, the nonconvex functions that we consider
here are functions which satisfy a generalized version of the subgradient
inequality. We call these functions zero-convex. An equivalent 
characterization of these functions (when they are lower semicontinuous) is
that their zero-level-sets are convex: see 
Proposition \ref{prop:0ConvCharacterize}\eqref{item:LevelSet} below.

Since a well-known characterization of quasiconvex functions is the property that all their
$\beta$-level-sets $\{x\mid f(x)\leq\beta\}$, $\beta\in\mathbb{R}$, are convex 
\cite[pp. 135--136]{bss-3ed-2006}, 
it follows, in particular, that when they are lower semicontinuous, then 
they are zero-convex, and hence the class of zero-convex functions is quite wide. 
Zero-convex functions may lack properties 
that convex functions have and their standard subdifferential might be empty
at many points. In return, their corresponding $0$-subdifferential is never empty.

The class of zero-convex functions holds a promise for studying optimization
problems which involve non-convex functions and to enrich the theory 
of generalized convexity \cite{ADSZ2010,CambiniMartein2009-book,CM-LV-1998-handbook,HKS2005-handbook}. 
The subclass of nonconvex (multivariate) polynomials seems to be of special interest. 
An example are polynomials which appear in the context 
of control theory \cite{HenrLassCSM2004}. As said there (page 72): \textquotedblleft Polynomial
optimization problems arising from control problems are often highly
non-convex, with several local optima, and are difficult to
solve...\textquotedblright. Additional related discussion can be found in
\cite[Problems 1 and 2]{HenrLassTAC2012}, with 2-variable polynomials whose degree tends to
infinity, and in \cite{HenrLass-Bookchap-2005,LassSIOPT2001}.  A 
related example is Example \ref{ex:polynomial} below. Zero-convex 
functions can help to analyze systems of (multivariate polynomial) equations,
much like convex optimization helps doing so in other cases 
\cite{CGTV-IJRNC-2003}. They can help in the analysis of (quasiconvex) quadratic functions 
which appear in the context of economics \cite[Chapter 6]{ADSZ2010}, 
\cite[Chapter 6]{CambiniMartein2009-book}. 
Our method (Algorithm \ref{alg:perturbed-csp} below) can be used for 
accelerating convergence in the case of quasiconvex polynomials \cite{HildebrandKoppe2013}. 

As said above, lower semicontionuous quasiconvex functions are zero-convex. Hence this subclass of zero-convex functions is promising too, 
especially when taking into account that such functions arise in optimization \cite{CM-LV-1998-handbook,HKS2005-handbook,Martinez-Legaz1988} or related areas such as economics and operations research \cite{ADSZ2010,CambiniMartein2009-book}, 
location theory \cite{Gromicho1998-book}, control  \cite{BGJ2013,BarronLiu1997}, and 
geometric problems \cite{AmeBerEpp-Algs-1999,Epp-MSRI-2005,Epp-TALG-2004-qaba}. 
In this context see Example \ref{ex:Voronoi} and Example \ref{ex:VoronoiWeighted} 
below where the involved 
(geometric) function is not necessarily quasiconvex. See also Section \ref{sec:ComputationalResults} below. Functions which appear 
in global optimization  \cite{HP1995-handbook,HorstTuy-book-1990,Pinter-book-1996} 
seem to be of interest too since they are usually nonconvex, e.g., d.c. functions (namely 
functions which can be represented as a difference of two convex functions).

\subsection{The number of involved sets} In most works  
dealing with subgradient  
projection methods for solving the CFP, a common assumption 
is that the feasible set $C$ 
is obtained from the intersection of finitely 
many sets.  
However, because infinitely many sets 
do appear in theory and practice, e.g., when dealing with infinite 
systems of linear equalities \cite{GG-book-1981} 
or with infinitely many nonlinear (convex) constraints arising in certain 
problems in economics and other areas (see \cite[pp. xiii-xiv]{ButnariuIusemBook} 
and the references therein), it is natural to consider also the CFP with 
infinitely many sets appearing in the formulation of the problem, and this is done in the present paper. A few other works considering  
 the CFP with infinitely many sets exist, for instance,  
 \cite{BauschkeCombettes2001,BCK2006,Combettes2001}  and  \cite{ButnariuCensorReich1997,ButnariuIusemBook}, but some 
do not consider the SSP. 

\subsection{The contributions of the present paper} 
The contributions of the present paper are listed as follows: 
(1) Introducing and discussing in a quite detailed way the class of 
zero-convex functions, a rich class of convex and nonconvex functions 
which holds a promise to solve optimization problems in various areas, 
especially in non-smooth and non-convex optimization;
(2) Discussing  the  sequential subgradient projection method 
for solving the feasibility problem, where the involved functions are zero-convex 
functions defined on a closed and convex subset of a real Hilbert space;
(3) Showing that certain perturbations are allowed without losing the 
weak and global convergence of sequences, generated with 
such perturbations, to a solution of the feasibility problem; 
(4) Sometimes the convergence is in norm; 
(5) The control sequence, according to which the subsets are employed during 
the sequential iterative process, can be more general than the cyclic or 
almost cyclic (quasi-periodic) controls;  
(6) Our results apply to feasibility problems with finitely- or infinitely-many sets; 
(7) Our results can be applied to additional optimization schemes (approximate minimization, 
superioization).

\subsection{Paper layout} The paper is laid out as follows. In Section
\ref{sec:zero-convex} the zero-convex functions and $0$-subdifferentiabilty are
defined and a few examples are given. In Section \ref{sec:properties} some of
their properties are discussed. The algorithm is formulated in Section
\ref{sec:alg}. Additional conditions for its convergence are listed in
Section \ref{sec:conditions} and its convergence is analyzed in Section
\ref{sec:convergence}. In Section \ref{sec:ComputationalResults} we 
present some computational results. We end the paper in Section \ref{sec:FurtherDiscussion} 
with a discussion of a number of issues related to the main themes of this paper, 
as well as several lines for further investigation.

\section{Zero-convex functions: Definition and examples\label{sec:zero-convex}}

In this section we introduce the class of functions that we deal with in this
paper and illustrate it with examples. These functions satisfy a generalized
version of the subgradient inequality described in Definition  
\ref{def:0subdifferential} below. 

From now on, unless otherwise stated, $H$ is a real Hilbert space 
with an inner product $\langle\cdot,\cdot\rangle$ and a norm $\|\cdot\|$,  
and $\Omega$ is a nonempty and convex subset of $H$ (closed in many cases). The 
$\beta$-level set of a function $g:\Omega\rightarrow\mathbb{R}$ is the set
$g^{\leq\beta}:=\{x\in\Omega\mid\,g(x)\leq\beta\}$ and, in particular, the
zero-level-set is $g^{\leq0}=\{x\in\Omega\mid g(x)\leq0\}$. The distance (or the gap) 
between a point $x\in H$ and a set $A\subseteq H$ is $d(x,A)=\inf\{d(x,a)\mid
a\in A\}$. The line segment connecting two points $x_1,x_2\in H$ is the set  $[x_{1},x_{2}]:=\{x_1+t(x_2-x_1)\,|\, t\in [0,1]\}$. 

\begin{definition}
\label{def:0subdifferential} Let $H$ be a real Hilbert space . Let $\Omega$ be a nonempty  convex subset of
$H$. A function $g:\Omega\rightarrow\mathbb{R}$ is said to be
\texttt{zero-convex at the point }$y\in\Omega$ if there exists a vector $t\in
H$ (called a $0$\texttt{-subgradient of }$g$\texttt{ at }$y$) satisfying%
\begin{equation}
g(y)+\langle t,x-y\rangle\leq0, \quad\forall x\in g^{\leq0}%
.\label{eq:0subgradient}%
\end{equation}
When the corresponding vector $t$ is given, then $g$ is said to be
zero-convex\texttt{ at }$y$\texttt{ with respect to }$t$. The set of all
$0$-subgradients of $g$ at $y$ is denoted by $\partial^{0}g(y)$ and called 
the 0-subdifferential of $g$ at $y$. A function
$g$ satisfying \eqref{eq:0subgradient} for all $y\in\Omega$ will be called
zero-convex\texttt{ on }$\Omega$ or just zero-convex (or 0-convex).
\end{definition}

As the examples below show (see Section \ref{sec:ComputationalResults} for 
additional examples), zero-convex functions are not necessarily convex. Also, by taking in
\eqref{eq:0subgradient} the vector $t$ to be in the dual space, the definition
can be extended to any real normed space and even beyond (e.g., locally convex
topological vector spaces and even to linear spaces if $t$ is merely a 
possibly discontinuous linear functional). However, we confine ourselves to
real Hilbert spaces.

\begin{remark}
\label{rem:Geometric0-Convex} {\bf Geometric interpretations: } 
The zero-convexity of a 
function $g$ can be illustrated geometrically. Two such interpretations are given below. \\

{\noindent \bf First interpretation: using the graph: } 
See Figures \ref{fig:ZeroConvexityIllustration1} and \ref{fig:ZeroConvexityIllustration2}. 
In what follows, it is useful to adopt the 
following terminology: the $g$\texttt{-nonpositive part of the graph of a
function} $f:\Omega\rightarrow\mathbb{R}$ is the set $\{(x,f(x))\mid x\in
g^{\leq0}\}$. Using this notion, one can see that the function $g$ is
zero-convex at $y$ with respect to $t$ if the $g$-nonpositive part of the
graph of the affine function $f(x)=g(y)+\langle t,x-y\rangle$ is below $0$.
Therefore, in order to check whether $g$ is zero-convex at $y$ with respect to
the vector $t$, we draw the graphs of this $f$ and of $g$, then we remove from
the domains of definition of these graphs all the points $x$ for which $g$ is
positive, and then we check whether the remaining part of the graph of $f$ is
below $0$. \\

\begin{figure}
\begin{minipage}[t]{0.48\textwidth}
\begin{center}
{\includegraphics[clip,scale=0.63]{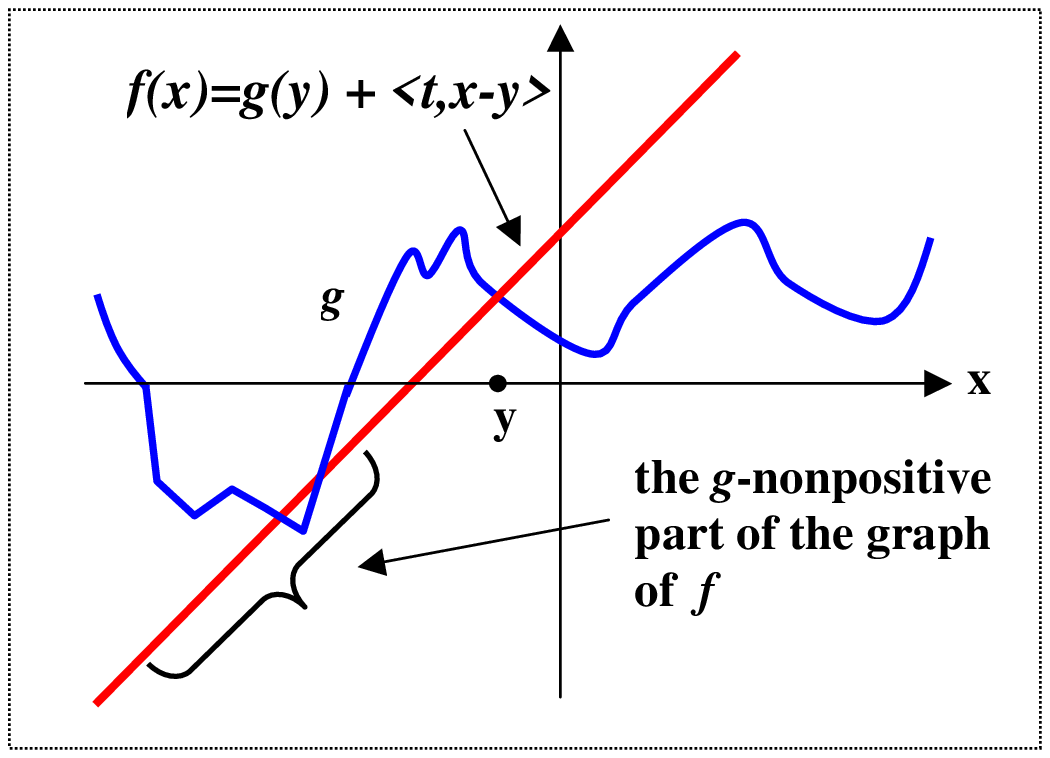}}
\end{center}
 \caption{The first geometric interpretation of zero-convexity: using 
 the graph (Remark \ref{rem:Geometric0-Convex}).} 
\label{fig:ZeroConvexityIllustration1}
\end{minipage}
\begin{minipage}[t]{0.48\textwidth}
\begin{center}
{\includegraphics[clip,scale=0.63]{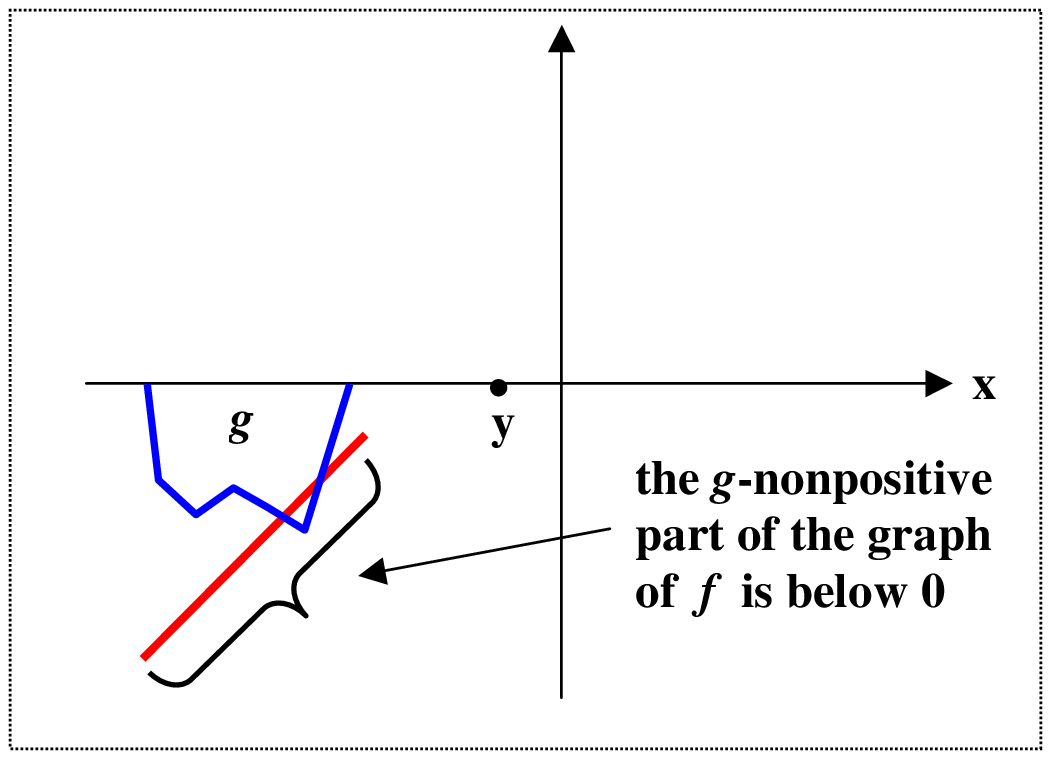}}
\end{center}
 \caption{The setting of Figure \ref{fig:ZeroConvexityIllustration1} after the $g$-positive parts of $f$ and $g$ were removed.} 
\label{fig:ZeroConvexityIllustration2}
\end{minipage}
\end{figure}

{\noindent \bf Second interpretation: using separating hyperplanes: } 
This interpretation holds only when $y\notin g^{\leq 0}$. 
We assume also that $g^{\leq 0}\neq \emptyset$. 
See Figure \ref{fig:0ConvHyperplane}. In this case 
\eqref{eq:0subgradient} implies that if $g$ is zero-convex 
at $y$ with a 0-subgradient $t$, then $t\neq 0$ (otherwise $g(y)\leq 0$ because 
of \eqref{eq:0subgradient}, a contradiction) and  for each $\omega \in (0,1]$ 
the hyperplane 
\begin{equation}
M(t,\omega):=\{x\in H|\,\, \langle t,x-y\rangle=-\omega g(y)\}
\end{equation}
strictly separates $y$ from $g^{\leq 0}$. On the other hand, as proved  
Proposition \ref{prop:0ConvCharacterize}\eqref{item:LevelSet} below, 
if $g^{\leq 0}$ is closed and convex, then $g$ is zero-convex at each point 
and any (closed) hyperplane separating $y\notin g^{\leq 0}$ from  $g^{\leq 0}$ (including  $M(t,\omega)$) allows us to find a 0-subgradient $t\in \partial^0 g(y)$ 
and to express it explicitly. In fact, any multiplication of this $t$ by a scalar 
greater than 1 remains a 0-subgradient as follows from 
Proposition \ref{prop:0Properties}\eqref{item:fg} below. 
Thus, at least when $g^{\leq 0}$ is nonempty, 
closed and convex, there is a certain duality between the 0-subgradients 
of $g$ at points $y\notin g^{\leq 0}$ and (closed) separating hyperplanes 
between $g^{\leq 0}$ and these points $y$. The freedom in the choice 
of the separating hyperplane yields a freedom in the choice of $t$, and this freedom may help 
in practice.   
\end{remark}

\begin{figure}
\begin{minipage}[t]{1\textwidth}
\begin{center}
{\includegraphics[clip,scale=0.7]{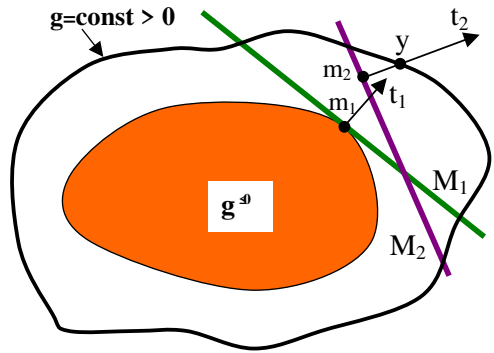}}
\end{center}
 \caption{The second geometric interpretation of zero-convexity: using 
 separating hyperplanes (Remark \ref{rem:Geometric0-Convex}). The 0-subgradients 
 can be expressed explicitly using \eqref{eq:t_remark}.} 
\label{fig:0ConvHyperplane}
\end{minipage}
\end{figure}

\begin{remark}
\label{rem:OtherSubdifferentials} To the best of our knowledge, our
generalizations of the subgradient inequality and the subdifferential in
Definition \ref{def:0subdifferential} are new. Several other generalizations or
variations of the standard notion of subdifferential have been considered in
the literature, e.g., the Clarke subdifferential \cite[pp. 25--27]%
{Clarke1983}, \cite{CSLW1998}, the Fr\'{e}chet and Hadamard subdifferentials
\cite{Rockafellar1979}, the $G$-subdifferential \cite{Ivanov2004}, 
the $H$-subdifferential \cite{Martinez-Legaz1988}, Mordukhovich's Subdifferential \cite{Mordukhovich1976,Soleimani-damaneh2010},
Plastria's lower subdifferential \cite{Plastria}, the Quasi-subdifferential
\cite{GreenbergPierskalla}, the Q-subdifferential \cite{MartinezLegazSach}, 
the $\Phi$-subdifferential \cite{PallaschkeRolewicz1997}, the star-subdifferential \cite{Penot1998},  the $\epsilon$-subdifferential \cite{MonteiroSvaiter2013}, 
generalizations of the subgradient inequality such as the notion of invexity 
\cite{Ben-IsraelMond1986,Hanson1981} or other notions related to convexity 
such as approximate convexity \cite{DJL2009,NgaiLucThera}. For a survey on
some of these concepts see \cite{BorweinZhu1999}.
\end{remark}

\begin{remark}\label{rem:0SubgradCompute} 
Computation of $\partial^{0}g$ is not always a simple task but we do have a 
theoretical method which enables the computation of an element in
$\partial^{0}g(y)$ for each $y\in\Omega$ whenever $g^{\leq 0}$ is closed and convex.  
The method is as follows. If $y\in g^{\leq 0}$, then we 
simply take $t=0$. If $y\notin g^{\leq 0}$, then we can take 
\begin{equation}\label{eq:t_remark}
t=\frac{g(y)}{\Vert y-m\Vert^{2}}(y-m), 
\end{equation}
where $M$ is any (closed) hyperplane which separates $y$ from $g^{\leq0}$ 
and $m\in M$ is the orthogonal projection of $y$ onto $M$. See Figure 
\ref{fig:0ConvHyperplane} above for an illustration and 
Proposition \ref{prop:0ConvCharacterize}\eqref{item:LevelSet} below for a proof. 

The examples given in this section, together with the propositions and their proofs given 
in Section \ref{sec:properties} and the computations given in Section \ref{sec:ComputationalResults}, illustrate further some of the techniques of computation.
In this connection we note that if one knows how to compute $G(y):=d(y,g^{\leq 0})$, then this 
yields a convex function whose 0-level-set coincides with $g^{\leq 0}$, and at least 
for the purpose of the CFP, one may want to use $G$ instead of $g$. 
However, as already said in Section \ref{sec:Intro}, usually this computation 
is not simple, and, in addition, it may result in either a complicated 
function $G$ or complicated (standard) subgradients. 
 Nevertheless, if $G$ can be computed, then one also has an additional 
way to compute 0-subgradients of $g$ (see Proposition  \ref{prop:0ConvCharacterize}\eqref{item:tm}) 
and this freedom may help in practice. 
\end{remark}

\begin{example}
\label{ex:convex} Any convex function $g:H\to\R$ having at least one point of continuity is 
zero-convex at any $y\in H$. This is so because in this case \cite[p. 76]{VanTiel1984} it has 
a standard subgradient at $y$ and the standard subgradient inequality 
\begin{equation}
g(y)+\langle t,x-y\rangle\leq g(x) \label{eq:SubgradientConvex}%
\end{equation}
implies that $g(y)+\langle t,x-y\rangle\leq0$ whenever $x\in g^{\leq 0}$, that is, 
\eqref{eq:0subgradient} holds with a standard subgradient $t\in \partial g(y)$. 
In particular $g$ is zero-convex at any $y\in H$ whenever $H=\R^n$ because 
by \cite[p. 70]{VanTiel1984} the finite dimensionality of $H$ implies that $g$ 
is continuous everywhere. 

In general, 
whenever $g:\Omega\to \R$ has a standard subgradient $t$ at some $y\in\Omega$, then 
$t$ is a 0-subgradient of $g$ no matter what subset is $\Omega$. This is true even if 
$g$ is not convex but \eqref{eq:SubgradientConvex} holds. In this connection, 
 Corollary \ref{cor:Quasiconvex} below implies that any lower semicontinuous  quasiconvex 
 function is zero-convex. 
\end{example}

\begin{example}
\label{ex:nonpositive} Any nonpositive function $g$ is zero-convex at every $y\in\Omega$ 
with $t=0$. However, this class of functions is not interesting for our SSP 
algorithm (Section \ref{sec:alg} below) since in this case any initial point $y$
will satisfy $g_{j}(y)\leq0$ for all involved functions $g_{j}$, hence the
generated sequence will be constant (equal to $y$ itself) which obviously
converges to a point in the intersection $C=\cap_{j\in J}\{x\in\Omega\mid g_{j}%
(x)\leq0\}$. Additionally, any positive function is zero-convex since
\eqref{eq:0subgradient} is void. But again, this is not interesting for our algorithm. 
However, a nonnegative function having a unique root (like many energy functions) 
is interesting for our algorithm since it is zero-convex (because its 
zero-level-set is obviously closed and convex: see Remark \ref{rem:Geometric0-Convex}, 
second interpretation) 
and hence, when we apply our algorithm to it, we can find its root, which is also 
its unique minimum. 
\end{example}

\begin{example}
\label{ex:sin} Let $g:\mathbb{R}\rightarrow\mathbb{R}$ be defined by%
\begin{equation}
g(x):=\left\{
\begin{array}
[c]{ll}%
\sin x, & \text{for }x\leq\pi/2,\\
2^{\sin x}, & \text{for }x>\pi/2,
\end{array}
\right.
\end{equation}
and let $y=\pi/2$. Then $g$ has a discontinuity at $y$. However, $g$
is zero-convex at $y$ with respect to $t=4/\pi$. Indeed, if $g(x)\leq0$, then
$x\leq0$. Therefore \eqref{eq:0subgradient} holds:%
\begin{equation}
g(y)+\langle t,x-y\rangle=1+4x/\pi-2<x\leq0.
\end{equation}

\end{example}

\begin{figure}
\begin{minipage}[t]{0.45\textwidth}
\begin{center}
{\includegraphics[clip,scale=0.8]{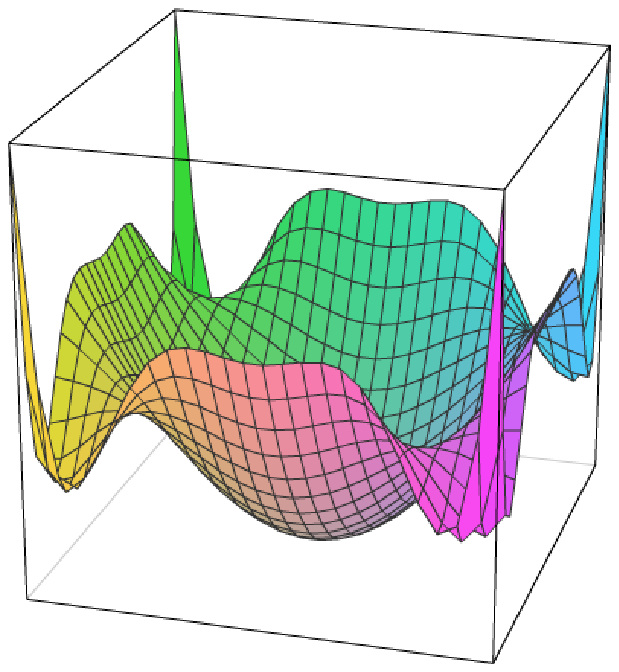}}
\end{center}
 \caption{An illustration of the polynomial of Example  \ref{ex:polynomial}.} 
\label{fig:0-ConvexPoly}
\end{minipage}
\begin{minipage}[t]{0.45\textwidth}
\begin{center}
{\includegraphics[clip,scale=0.83]{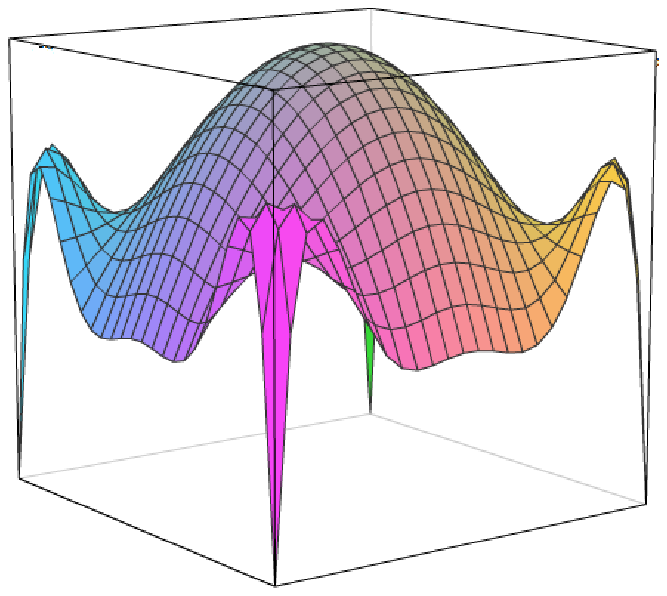}}
\end{center}
 \caption{Another illustration of the polynomial, now from the reverse perspective.} 
\label{fig:0-ConvexPolyReverse}
\end{minipage}
\end{figure}

\begin{example}
\label{ex:polynomial} Let $g:\mathbb{R}^{2}\rightarrow\mathbb{R}$ be defined
by%
\begin{equation}
g(x_{1},x_{2})=x_{1}^{2}+x_{2}^{2}-x_{1}^{4}x_{2}^{4}+x_{1}^{6}x_{2}%
^{6}/4-0.3.
\end{equation}
Elementary calculations (checking the principal minors and using polar
coordinates) show that $g$ is convex on the disk $D_{1}=\{(x_{1},x_{2}%
)\mid\,x_{1}^{2}+x_{2}^{2}<0.7^{2}\}$ and that $g^{\leq0}\subseteq
D_{2}=\{(x_{1},x_{2})\mid x_{1}^{2}+x_{2}^{2}\leq0.6^{2}\}$. In addition, it
is evident from Figures  \ref{fig:0-ConvexPoly} and \ref{fig:0-ConvexPolyReverse}
that $g$ is not quasiconvex. As for the computation of the 0-subgradients of
$g$, if $y=(y_{1},y_{2})\in D_{1}$, then we can simply take standard
subgradients, thus,%
\begin{equation}
t=\nabla g(y)=(2y_{1}-4y_{1}^{3}y_{2}^{4}+1.5y_{1}^{5}y_{2}^{6},2y_{2}%
-4y_{2}^{3}y_{1}^{4}+1.5y_{2}^{5}y_{1}^{6}).
\end{equation}
For $y\notin D_{1}$, we use  \eqref{eq:t_remark}. 
The line $M$ passing through the projection 
$m=(0.6y)/\Vert y\Vert$ of $y$ on $D_{2}$ and orthogonal to $y-m$ separates $y$
and $g^{\leq0}$. Thus from \eqref{eq:t_remark} we conclude that%
\begin{equation}
t=\frac{g(y)(y-m)}{\Vert y-m\Vert^{2}}=\frac{g(y)}{\Vert y\Vert(\Vert
y\Vert-0.6)}(y_{1},y_{2})
\end{equation}
is in $\partial^{0}g(y)$. As said in Section \ref{sec:Intro}, inequalities
involving nonconvex polynomials (sometimes of high degree) appear in
optimization problems
\cite{HenrLassCSM2004,HenrLass-Bookchap-2005,HenrLassTAC2012,HildebrandKoppe2013,LassSIOPT2001} 
and related fields such as economics and operations research \cite{ADSZ2010,CambiniMartein2009-book}. 
\end{example}

\begin{example}
\label{ex:Voronoi} Let $H$ be a real Hilbert space and $\Omega$ be a 
nonempty closed and convex subset of $H$. Let $p\in\Omega$ and $A\subseteq H$ be
given. Suppose that the distance $d(p,A)$ between $p$ and $A$ is positive.
Define a function $g:\Omega\rightarrow\mathbb{R}$ by%
\begin{equation}
g(x):=d(x,p)-d(x,A),\quad\forall x\in\Omega.\label{eq:Voronoi}%
\end{equation}
This function (or, actually, the so obtained family of functions)
is zero-convex. Indeed, as said in 
Remark \ref{rem:Geometric0-Convex} (second interpretation), it suffices to show that $g^{\leq 0}$ is closed and convex (it is nonempty because $p\in g^{\leq 0}$).   
Now, since $g^{\leq 0}=\{x\in H\mid d(x,p)\leq d(x,A)\}\bigcap \Omega$ and because $\Omega$ is 
closed and convex, it is sufficient to prove that the first set in the intersection is 
closed and convex. A computation shows that 
\begin{equation}\label{eq:Halfspace}
\{x\in H \mid d(x,p)\leq d(x,A)\}=\bigcap_{a\in A}\{x\in H \mid  d(x,p)\leq d(x,a)\}. 
\end{equation}
Since $p\neq a$ for each $a\in A$, each of the members in the above intersection 
is nothing but the closed half-space whose bounding hyperplane passes through $(p+a)/2$ and orthogonal 
to $p-a$.  Thus $\{x\in H\mid d(x,p)\leq d(x,A)\}$ is the intersection of closed and convex 
sets, and hence closed and convex.

The zero-level-set of this function $g$ is the, so-called, \texttt{Voronoi cell of } 
$p$ \texttt{ (restricted to }$\Omega
$\texttt{)} \texttt{ with respect to the set }$A$, and hence $g$ deserves the name 
``Voronoi function''. A particular  
and frequently explored case is where the set $A$ consists of finitely many distinct 
points $p_1, p_2,\ldots, p_{\ell}$. These points, together with the given point $p=p_0$, 
are called the \texttt{sites}, and the Voronoi cell corresponding to the site $p_i$ is the set 
$\{x\in \Omega\mid d(x,p_i)\leq d(x,p_j),\quad \forall j\neq i\}$. 
 The collection of these cells is the \texttt{Voronoi diagram induced by the sites}. 
 Voronoi diagrams have numerous applications in science and technology, see, e.g., 
 \cite{Aurenhammer,VoronoiWeb,OBSC}. As can be seen from these surveys, Voronoi diagrams have
applications also when the sites are assumed to have more general shapes than
points, such as lines segments, balls, and so on, and hence in this case the
set $A$ may be infinite. Traditionally, Voronoi diagrams have been
investigated in finite-dimensional spaces (especially in $\mathbb{R}^{2}$ and
$\mathbb{R}^{3}$), but recently they have been investigated in
infinite-dimensional spaces too
\cite{KopeckaReemReich,ReemISVD09,ReemGeometricStabilityArxiv}, and several
real-world and theoretical applications were mentioned there.
 
Returning to $g$, it can be shown, using the triangle inequality, that
$\left\vert g(x)\right\vert \leq\sup_{a\in A}\Vert p-a\Vert$ for every 
$x\in\Omega$ (in fact, because $\{p\}$ is a singleton, the right-hand side 
is equal to the Hausdorff distance between $\{p\}$ and $A$). Thus, when $A$ is bounded, 
then $g$ is bounded on $\Omega$. However, if in addition $\Omega=H$, then this 
implies that $g$ cannot be convex. Indeed, assume by way of negation that $g$ is convex. 
Then because it is 
proper (since it is finite) and  lower semicontinuous (actually continuous), 
it can be represented as the pointwise supremum of a nonempty family of 
continuous affine functions \cite[p. 91]{VanTiel1984}. Since $g$ is non-constant, 
at least one member $h$ in this family of affine functions must be non-constant. 
In other words, there exist $0\neq v\in H$ and $\alpha\in \R$ such that 
$h:=\langle v,\cdot\rangle+\alpha$ satisfies $h(x)\leq g(x)$ for all $x\in \Omega$. 
But $\lim_{t\to\infty} h(tv)=\infty$. Thus $g$ is not bounded, a contradiction 
to what was established before.

As a matter of fact, frequently $g$ is not even quasiconvex. Indeed, just consider
the simple case where $\Omega=H=\mathbb{R}^{2}$, $p=(0,0)$, $A=\{(0,1)\}$.
Then for $x=(-1,1),$ $z=(1,1)$, and $y=(0,1)$ we have $y\in\lbrack x,z]$ but
$g(x)=g(z)=\sqrt{2}-1<1=g(y)$. The same argument holds whenever $A$ contains
an isolated point and the dimension of the space is at least 2 and $\Omega=H$.
It can hold even if $A$ does not have any isolated point: just take
$p,x,y,z,\Omega,H$ as above but either $A=\{0\}\times\lbrack0.5,1]$ or
$A=\{0\}\times\lbrack0.5,\infty)$. However, in some symmetric configurations
$g$ may be quasiconvex: for instance, when $A$ is a sphere, $p$ is
the center of the corresponding ball, and $\Omega=H$.

Computation of the 0-subgradients of $g$ is possible by 
the description mentioned in Remark \ref{rem:0SubgradCompute} 
(especially equality \eqref{eq:t_remark}).  If $y\in g^{\leq0}$, 
 then obviously $0\in\partial^{0}g(y)$. Otherwise, the definitions of $g$ and $g^{\leq0}$ 
 imply that there exists an $a\in A$ such that $\Vert y-a\Vert<\Vert y-p\Vert$. If we denote by 
$M$ the bisector between $p$ and $a$, namely the set of all points in $H$
having equal distance to $p$ and to $a$, then $M$ is a hyperplane which is the
boundary of the half-space $\{x\in H\mid d(x,p)\leq d(x,a)\}$. The point $y$ is
located strictly inside the other half-space $\{x\in H\mid d(x,a)\leq d(x,p)\}$. 
Since $g^{\leq0}$ is contained in 
$\{x\in H\mid d(x,p)\leq d(x,a)\}$ (as explained in \eqref{eq:Halfspace} and above it) 
it follows that $M$ is a 
hyperplane separating $y$ and $g^{\leq0}$. Let $m$ be the orthogonal
projection of $y$ onto $M$. By \eqref{eq:t_remark} it follows that 
$t=g(y)(y-m)/\Vert y-m\Vert^{2}$ is in $\partial^{0}g(y)$. 

It is possible to represent $t$ in a more convenient way. 
 Indeed, note that the hyperplane 
$M$ defined above can be represented explicitly as 
$M=\{x\in H\mid\langle x-u_0, v\rangle=0\}$ where $u_0=0.5(a+p)$ 
and $v=(a-p)/\|a-p\|$. Since $m$ is the orthogonal
projection of $y$ onto $M$ we can write $y=m+\beta v$ where $\beta$ is some real number.  
This and the Pythagoras theorem imply the identity $\|y-u_0\|^2=\beta^2+\|y-\beta v-u_0\|^2$, 
from which it follows that $\beta=\langle y-u_0,v\rangle$. From \eqref{eq:t_remark} 
we conclude that 
\begin{equation}\label{eq:t_voronoi}
t=\frac{g(y)v}{\beta}=\frac{g(y)(a-p)}{\langle y-0.5(a+p),a-p\rangle}.
\end{equation}
This $t$ depends on $y$ but also on $a$. By an appropriate selection of $a\in A$ we can ensure that $\|t\|\leq 4$. In fact, we can even ensure that 
$\|t\|$ will be bounded above by a number arbitrarily close to 2 and sometimes 
even by 2 (when $d(y,A)$ is attained). Indeed, assume $y\notin g^{\leq 0}$. Let $\epsilon \in (0,0.5g(y))$ be arbitrary. 
Let $a\in A$ be chosen such that 
\begin{equation}\label{eq:a_select}
d(y,a)<d(y,A)+\epsilon. 
\end{equation}
From the definition of $g$ and the triangle inequality we see that  
any point $x$ in the open ball of radius $0.5g(y)-\epsilon$ around $y$ satisfies 
\begin{multline}
d(x,a)\leq d(x,y)+d(y,a)<0.5g(y)-\epsilon+d(y,A)+\epsilon\\
\leq d(y,p)-g(y)+0.5g(y)\leq d(y,x)+d(x,p)-0.5g(y)<d(x,p),
\end{multline}
and hence $x$ belongs to the half-space to which $y$ belongs. Recalling that $|\beta|=\|y-m\|$ 
and that $m$ is in the other half-space, we have  
$|\beta|\geq 0.5g(y)-\epsilon$. This and \eqref{eq:t_voronoi} show 
that  
\begin{equation}\label{eq:t_norm}
\|t\|\leq\frac{g(y)}{0.5g(y)-\epsilon}. 
\end{equation}
This proves the claim since $\epsilon$ can be arbitrary small and 
we can select the appropriate $a\in A$ as above so that \eqref{eq:a_select} and 
hence \eqref{eq:t_norm} will be satisfied. 
In particular, by taking $\epsilon=0.25g(y)$ we obtain $\|t\|\leq 4$. If in addition  $d(y,A)=d(y,a)$ for some $a\in A$, then 
by choosing this $a$ and mimicking the previous analysis with $\epsilon=0$ we see  
that $\|t\|\leq 2$. 
\end{example}

\begin{example}\label{ex:VoronoiWeighted}
The functions described below are variations of the Voronoi function defined in 
\eqref{eq:Voronoi}. They deserve some attention since a particular case 
of them will be used in Section \ref{sec:ComputationalResults}. 

One variation is obtained by replacing $p$ by a subset $P$ and taking  
\begin{equation}
g(x):=g_{P,A}(x)=d(x,P)-d(x,A). \label{eq:P}%
\end{equation}
See  \cite{Aurenhammer,VoronoiWeb,KopeckaReemReich,OBSC,ReemGeometricStabilityArxiv} 
and the references therein for some applications of Voronoi cells defined in 
this way. In general, the Voronoi cell $g^{\leq0}$ is closed ($g$ is 2-Lipschitz) but not convex. However, 
 in some cases it is convex, e.g., when $\Omega=H=\mathbb{R}^{2}$, 
$A=\{(-1,0),(0,-1),(1,-1),(0,1),(1,1),(2,0)\}$, and $P=\{(0,0),(1,0)\}$. 

Another variation is to consider weighted distances, namely, we assign to each $a\in A$ a real 
number $w_{a}$ (a weight), and assign a weight $w_{p}$ to $p$. 
 For every $x\in \Omega$ and $a\in A$ let 
\begin{equation}%
\begin{array}
[c]{lll}%
d_{p}(x) & := & \Vert x-p\Vert-w_{p},\\
d_{a}(x) & := & \Vert x-a\Vert-w_{a},\\
d_{A}(x) & := & \inf\{d_{a}(x)\mid a\in A\},
\end{array}
\end{equation}
and define the additively weighted  Voronoi function 
\begin{equation}
g(x):=g_{w}(x):=d_{p}(x)-d_{A}(x). \label{eq:weighted}%
\end{equation}
The 0-level-set $g^{\leq 0}$ is the additively weighted Voronoi cell of the site $p$. 
It is closed since $d_A$ is upper semicontinuous and 
hence $g$ is lower semicontinuous. In molecular biology 
\cite{GPF1997,KWCKLBK2006,RKCPK2005} the site $p$ 
represents the center of a spherical atom (or molecule) whose van der Waals radius is $w_p$. 
Hence $d_p(x)$ is the distance from $x$ to the sphere 
for each $x$ outside the corresponding ball. Similarly, each $a\in A$ represents 
the center of a spherical atom (or molecule) whose van der Waals  
radius is $w_a$. In crystallography and stochastic geometry 
the common name to additively weighted 
Voronoi diagrams is Johnson-Mehl tessellation (or model). In this model 
$p$ (and each $a\in A$) represent a nucleation center from which a crystal starts 
to grow in a uniform way in all directions, but the growing process starts at 
different times from each nucleation center. In this case $w_p$ is minus the 
starting time of the growth from $p$ and $w_a$ is minus the 
starting time of the growth from $a$. See, e.g., \cite{CSKM2013,OBSC} and the 
references therein. See also Section \ref{sec:ComputationalResults} below for 
a concrete computational result in the molecular biology context.

Under certain assumptions on the parameters the function $g$ defined in \eqref{eq:weighted} 
is zero-convex. 
For instance, assume that $A=\{a\}$, $a\in H$ is given, and that 
\begin{equation}\label{eq:wp_wa}
w_p\leq w_a<\|a-p\|+w_p. 
\end{equation}
Let $B$ be the ball of 
radius $w_a-w_p$ around $a$ (degenerates to a point when $w_a=w_p$). 
We claim that under these assumptions 
\begin{equation}\label{eq:gh}
g^{\leq 0}=G^{\leq 0}  
\end{equation}
where $G(x)=d(x,p)-d(x,B)$ for all $x\in \Omega$. 
Indeed, if $x\notin B$, then we have the equality $G(x)=d(x,p)-(d(x,a)-(w_a-w_p))=g(x)$.  Hence the intersection 
of both sides of \eqref{eq:gh} with the complement of $B$ coincide. 
If $x\in B$, then $G(x)=d(x,p)$ and hence 
$x\in G^{\leq 0}$ would imply that $x=p$, a contradiction to $p\notin B$ 
(by \eqref{eq:wp_wa}). Therefore $x\notin G^{\leq 0}$. 
However, the assumption $x\in B$ implies 
$d(x,a)\leq w_a-w_p$. Hence $x$ cannot belong to $g^{\leq 0}$ because 
this would imply that $d(x,p)\leq d(x,a)-(w_a-w_p)\leq 0$, and again $x=p$, 
a contradiction. We conclude that \eqref{eq:gh} holds. Since we already know 
from Example \ref{ex:Voronoi} that $G^{\leq 0}$ is convex, 
it follows that $g^{\leq 0}$ is convex. Since $g^{\leq 0}$ is closed, 
Remark \ref{rem:Geometric0-Convex} (second interpretation) implies that $g$ is zero-convex. 
Geometrically (at least when $\Omega$ is, say, the whole space or it is a cube 
containing $p$ and $a$ in its interior), the boundary of 
$g^{\leq 0}$ is the intersection of 
$\Omega$ with the (possibly infinite dimensional) hyperboloid  
$\{x\in H \,| \,d(x,p)-d(x,a)=w_p-w_a\}$. In addition, $p\in g^{\leq 0}$. 
When $w_p=w_a$, the hyperboloid degenerates to a hyperplane.  

We finish this example by noting that there are other weighted 
versions of Voronoi diagrams. One of them is the multiplicative weighted distance in which 
$d_p(x)=d(x,p)/w_p$ and $d_a(x)=d(x,a)/w_a$ for some given positive 
weights $w_p$ and $w_a$. This version is used in molecular biology, 
e.g., in \cite{GTL1995}, where again $w_p$ and $w_a$ are the van der Waals 
radii of the involved atoms/molecules. See also \cite{Aurenhammer,OBSC}. 
\end{example}

\section{Zero-convex functions: Properties}

\label{sec:properties}

In this section we present several properties of zero-convex functions and 
discuss theoretical ways of constructing their 0-subgradients.

\begin{proposition}
\label{prop:0ConvCharacterize} Let $H$ be a real Hilbert space and let
$\Omega$ be a nonempty convex subset of $H$. Let $g:\Omega\rightarrow\mathbb{R}$  
be given. 

\begin{enumerate}
[(a)]

\item \label{item:PsiConvex} The function $g$ is
zero-convex at $y\in\Omega$ with respect to some $t\in \partial^0g(y)$ if and only if there exists a function
$\psi:\mathbb{R}\rightarrow\mathbb{R}$ satisfying $\psi(r)\leq0$ for all
$r\leq0$ such that%
\begin{equation}
g(y)+\langle t,x-y\rangle\leq\psi(g(x)),\quad\forall x\in g^{\leq0}.
\label{eq:PsiSubgradient}%
\end{equation}

\item \label{item:0ConvexImplyConvexZ} If $g$ is
zero-convex, then its zero-level-set $g^{\leq0}$ is convex.

\item \label{item:LevelSet} If $g^{\leq0}$ is closed and convex, then $g$ is 
zero-convex. In fact, if $y\in g^{\leq0}$, then $0\in\partial^{0}g(y)$, 
and if $y\notin g^{\leq0}$, then for 
\begin{equation}\label{eq:t}
t=\frac{g(y)}{\Vert y-m\Vert^{2}}(y-m), 
\end{equation}
we have $t\in\partial^{0}g(y)$ where $m\in M$ is the orthogonal projection of 
$y$ onto  a (closed) hyperplane $M$ strictly 
separating $y$ from $g^{\leq0}$. 
\item\label{item:tm} If $m$ is the (unique) orthogonal projection of 
$y\notin g^{\leq0}$ onto $g^{\leq0}$, then for $t$ defined in \eqref{eq:t} 
we have $t\in\partial^{0}g(y)$. 
\end{enumerate}
\end{proposition}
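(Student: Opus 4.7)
My plan is to treat the four parts in order, with the bulk of the work concentrated in part (c).

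Parts (a) and (b) admit short arguments. For (a), the reverse direction is immediate: assuming \eqref{eq:PsiSubgradient} and $\psi(r)\leq 0$ for all $r\leq 0$, then for any $x\in g^{\leq 0}$ we have $g(x)\leq 0$, hence $\psi(g(x))\leq 0$, so \eqref{eq:PsiSubgradient} forces \eqref{eq:0subgradient}. For the forward direction, I would simply take $\psi\equiv 0$; then the hypothesis on $\psi$ holds trivially and \eqref{eq:PsiSubgradient} reduces to \eqref{eq:0subgradient}. For (b), given $x_1,x_2\in g^{\leq 0}$ and $\lambda\in[0,1]$, set $y:=\lambda x_1+(1-\lambda)x_2\in\Omega$ (using convexity of $\Omega$) and pick any $t\in\partial^0 g(y)$. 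Applying \eqref{eq:0subgradient} at $x=x_1$ and at $x=x_2$, multiplying the resulting inequalities by $\lambda$ and $1-\lambda$ respectively, and summing, the $\langle t,\cdot\rangle$ terms collapse because $\lambda(x_1-y)+(1-\lambda)(x_2-y)=0$, and one is left with $g(y)\leq 0$, placing $y$ in $g^{\leq 0}$.

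For part (c) I would split into cases. If $y\in g^{\leq 0}$, then $g(y)\leq 0$ and $t=0$ fulfills \eqref{eq:0subgradient} trivially. If $y\notin g^{\leq 0}$ and $g^{\leq 0}\neq\emptyset$, I invoke the strict separation theorem for a point and a closed convex subset of a Hilbert space to obtain a closed hyperplane $M$ strictly separating $y$ from $g^{\leq 0}$; letting $m$ be the orthogonal projection of $y$ onto $M$, the vector $y-m$ is normal to $M$ and points into the open half-space containing $y$, so every $x\in g^{\leq 0}$ satisfies $\langle y-m,x-m\rangle\leq 0$. The heart of the computation is the decomposition $x-y=(x-m)-(y-m)$, giving
\begin{equation}
g(y)+\langle t,x-y\rangle
= g(y)+\frac{g(y)}{\|y-m\|^2}\bigl[\langle y-m,x-m\rangle-\|y-m\|^2\bigr]
= \frac{g(y)\,\langle y-m,x-m\rangle}{\|y-m\|^2}
\end{equation}
for $t$ as in \eqref{eq:t}. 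Since $g(y)>0$ and $\langle y-m,x-m\rangle\leq 0$, the right-hand side is $\leq 0$, confirming $t\in\partial^0 g(y)$. The degenerate case $g^{\leq 0}=\emptyset$ is trivial because \eqref{eq:0subgradient} is then vacuously true for any $t$.

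Part (d) is a direct specialization of (c): choose $M$ to be the hyperplane through the metric projection $m$ of $y$ onto the closed convex set $g^{\leq 0}$ itself, with normal direction $y-m$. The standard variational characterization of the metric projection onto a closed convex set in a Hilbert space supplies exactly $\langle y-m,x-m\rangle\leq 0$ for every $x\in g^{\leq 0}$, so the same computation as in (c) applies verbatim. I expect the only real obstacle to be notational, namely keeping the decomposition $x-y=(x-m)-(y-m)$ aligned with the formula \eqref{eq:t} so that the cancellation $g(y)-\frac{g(y)}{\|y-m\|^2}\|y-m\|^2=0$ is visible; conceptually everything reduces to classical Hilbert-space separation and projection facts.
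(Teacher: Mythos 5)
Your proposal is correct and follows essentially the same route as the paper: part (a) via the same trivial choice of $\psi$, part (c) and (d) via strict separation, the projection formula $t=\frac{g(y)}{\|y-m\|^2}(y-m)$, and the variational inequality $\langle y-m,x-m\rangle\le 0$. The only cosmetic difference is in (b), where you argue directly by taking a convex combination of the two subgradient inequalities, while the paper phrases the same affine-function computation as a proof by contradiction; both are equivalent.
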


\begin{proof}
It can be assumed that $g^{\leq 0}\neq\emptyset$, otherwise 
the assertion holds trivially (void). 
\begin{enumerate}[(a)]

\item If there exists such a function $\psi$, then \eqref{eq:PsiSubgradient}
implies \eqref{eq:0subgradient} since $g(x)\leq0$ implies $\psi(g(x))\leq0$.
Hence $g$ is zero-convex at $y$ and $t\in \partial^0 g(y)$. Conversely, if $g$ is zero-convex 
at $y$ and $t\in \partial^0 g(y)$, then \eqref{eq:PsiSubgradient} is satisfied with any function $\psi
:\mathbb{R}\rightarrow\mathbb{R}$ satisfying $\psi(r)=0$ whenever $r\leq0$.

\item Suppose, by way of negation, that $g^{\leq0}$ is not convex. Then there
exist two distinct points $x_{1},$ $x_{2}\in g^{\leq0}$ such that for some $y$
in the line segment $[x_1,x_2]$ we have $y\notin g^{\leq0}$, namely
$g(y)>0$. Since $g$ is zero-convex on the convex subset $\Omega,$ thus at $y,$ there is 
a point $t\in H$ such that \eqref{eq:0subgradient} holds. This and the fact
that $g(x_{i})\leq0$, $i=1,2,$ imply that the function%
\begin{equation}
f(x):=g(y)+\langle t,x-y\rangle
\end{equation}
satisfies $f(x_{i})\leq0$, $i=1,2$. Since $f(x)$ is convex and $y\in\lbrack
x_{1},x_{2}]$ we also have $f(y)\leq0$. This is a contradiction since
$f(y)=g(y)>0$.

\item   Given $y\in\Omega$, distinguish between the cases $y\in g^{\leq0}$ or
$y\notin g^{\leq0}$. In the first case define $t:=0$. Then for any $x\in
g^{\leq0}$ we obviously have%
\begin{equation}
g(y)+\langle t,x-y\rangle\leq0+0, \label{eq:0+0}%
\end{equation}
hence, \eqref{eq:0subgradient} is satisfied. Now consider the case $y\notin
g^{\leq0}$. Since $g^{\leq0}$ is closed and convex, the Hahn-Banach theorem,
in one of its geometric versions \cite[p. 38]{VanTiel1984}, ensures that there exists a 
 hyperplane $M$ strictly separating $y$ from $g^{\leq0}$. The hyperplane $M$ is 
  guaranteed to be a closed set  and, actually, it can be written as $M=\{x\in H\mid\langle e,x-m\rangle
=0\}$ where $m\in M$ is the orthogonal projection of $y$ onto $M$ and
$e=(y-m)/\Vert y-m\Vert$. We have the decomposition $H=M\cup H_{1}\cup H_{2}$
where $H_{1}=\{x\in H\mid\langle e,x-m\rangle>0\}$ and $H_{2}=\{x\in
H\mid\langle e,x-m\rangle<0\}$. By the definition of $m$, $M$, and $e$ it
follows that $y\in H_{1}$ and $g^{\leq0}\subseteq M\cup H_{2}$. Let
$\beta:=g(y)/\Vert y-m\Vert$ and $t:=\beta e$, as in \eqref{eq:t}. Since
$g(y)>0$ we have $\beta>0$. The above implies that for each $x\in g^{\leq0}$%
\begin{equation}
g(y)+\langle t,x-y\rangle=\langle t,y-m\rangle+\langle t,x-y\rangle
=\beta\langle e,x-m\rangle\leq0, \label{eq:beta}%
\end{equation}
thus, \eqref{eq:0subgradient} is satisfied again. As a matter of fact, by
translating $M$ slightly towards $y$ we can even ensure that $g^{\leq0}\subset
H_{2}$ and so \eqref{eq:beta} will be satisfied with strict inequality.

\item Because $m$ is the  orthogonal projection of 
$y\notin g^{\leq0}$ onto $g^{\leq0}$ (whose existence and uniqueness are well-known, 
see, e.g., \cite{GG-book-1981}), then the hyperplane $M$ 
which passes through $m$ and is orthogonal to $y-m$ strictly separates $y$ and $g^{\leq0}$. 
Indeed, since $\langle y-m,y-m\rangle>0$ (otherwise $y\in g^{\leq 0}$) we have 
$y\in H_{1}=\{x\in H\mid\langle y-m,x-m\rangle>0\}$. We can write 
$M=\{x\in H\mid\langle y-m,x-m\rangle=0\}$ and we have the decomposition $H=M\cup H_{1}\cup H_{2}$ where 
 $H_{2}=\{x\in H\mid\langle y-m,x-m\rangle<0\}$. A well-known characterization of the 
 orthogonal projection $m$ of a point $y$ onto a nonempty, closed and convex 
subset says that $\langle y-m,x-m\rangle \leq 0$ 
for every $x$ in the subset (see \cite[p. 46]{BauschkeCombettes2011}). Therefore $g^{\leq 0}\subseteq M\cup H_2$ 
and hence $g^{\leq 0}\cap H_1=\emptyset$. Thus $M$ strictly separates 
$y$ and $g^{\leq 0}$ and the assertion follows from part \eqref{item:LevelSet}. 
\end{enumerate}
\end{proof}

\begin{remark}
\label{rem:ConevxZeroLevelImplyConvex} An alternative but somewhat related
approach to the fact that the
convexity of $g^{\leq0}$ implies the zero-convexity of $g$, based on an idea
of Benar Svaiter \cite{SvaiterPC2011}, is as follows. 
Assume that $g^{\leq 0}\neq\emptyset$, otherwise the assertion holds trivially (void). 
Define  the distance from $x$ to $g^{\leq0}$ as $f(x):=d(x,g^{\leq0})$ for each $x$. 
 This continuous function is also convex since
$g^{\leq0}$ is convex. Hence, as is well-known \cite[p.76]{VanTiel1984}, it has a (standard) subgradient $s$ at any
$y$. Let $t:=cs$ where%
\begin{equation}
c:=\left\{
\begin{array}
[c]{ll}%
g(y)/f(y), & \text{if }f(y)\neq0,\\
0, & \text{otherwise.}%
\end{array}
\right.
\end{equation}
Note that $f(y)=0$ if and only if $y\in g^{\leq0}$ because $g^{\leq0}$ is
closed. This implies \eqref{eq:0subgradient} when $y\in g^{\leq0}$ with $t=0$
(as in \eqref{eq:0+0}). When $y\notin g^{\leq0}$ we have $f(y)>0$, $g(y)>0$,
and $t=(g(y)/f(y))s$. By the subgradient inequality, which $f$ satisfies, we
have%
\begin{equation}
f(y)+\langle s,x-y\rangle\leq f(x)=0,\quad\forall x\in g^{\leq0}.
\end{equation}
This implies \eqref{eq:0subgradient} after multiplying this inequality by
$c=g(y)/f(y)$.
\end{remark}

\begin{corollary}
\label{cor:Quasiconvex} Let $H$ be a Hilbert space and $\Omega\subseteq H$ be nonempty,  
closed, and convex. Any lower semicontinuous function $g:\Omega\to\R$ having a convex 
zero-level-set is zero-convex. In particular, if $g$ is lower semicontinuous and 
quasiconvex, then it is zero-convex.
\end{corollary}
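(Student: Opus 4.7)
The plan is to reduce both assertions to Proposition~\ref{prop:0ConvCharacterize}\eqref{item:LevelSet}, which says that whenever the zero-level-set $g^{\leq 0}$ is closed and convex, the function $g$ is zero-convex (and even provides an explicit formula for a $0$-subgradient at each point). So the whole task becomes verifying the two hypotheses ``closed'' and ``convex'' for $g^{\leq 0}$ under each of the two sets of assumptions.

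First I would handle the main statement. Convexity of $g^{\leq 0}$ is assumed by hypothesis, so only closedness needs justification. Here I would invoke the standard fact that every sub-level set of a lower semicontinuous function is closed: if $(x_n)\subseteq g^{\leq 0}$ converges to some $x\in\Omega$ (note that $\Omega$ is closed, so the limit indeed belongs to $\Omega$), then lower semicontinuity gives $g(x)\leq\liminf_{n\to\infty}g(x_n)\leq 0$, whence $x\in g^{\leq 0}$. Combining closedness and convexity, Proposition~\ref{prop:0ConvCharacterize}\eqref{item:LevelSet} applies and $g$ is zero-convex on $\Omega$.

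For the second (``in particular'') assertion, I would recall the well-known characterization mentioned already in Section~\ref{subsec:ZeroConvex}: a function is quasiconvex precisely when each of its $\beta$-level sets is convex. Applying this with $\beta=0$, we obtain that $g^{\leq 0}$ is convex, and then the first part of the corollary, just established, finishes the argument.

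There is essentially no obstacle here, since the work has already been done in Proposition~\ref{prop:0ConvCharacterize}\eqref{item:LevelSet}; the only subtle point is remembering that closedness of $g^{\leq 0}$ relative to $H$ (not merely relative to $\Omega$) is what the proposition requires, but this is automatic because $\Omega$ itself is assumed closed in $H$, so a set that is closed in $\Omega$ and is a sub-level set pulled back from a l.s.c.\ function on $\Omega$ is closed in $H$ as well.
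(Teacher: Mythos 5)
Your proposal is correct and follows essentially the same route as the paper's own proof: deduce closedness of $g^{\leq 0}$ in $H$ from lower semicontinuity together with the closedness of $\Omega$, invoke Proposition~\ref{prop:0ConvCharacterize}\eqref{item:LevelSet}, and handle the quasiconvex case via the convexity of all level-sets. The only cosmetic difference is that the paper explicitly notes the vacuous case $g^{\leq 0}=\emptyset$ up front, which your argument also covers implicitly since the empty set is closed and convex.
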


\begin{proof}
It can be assumed that $g^{\leq 0}\neq\emptyset$, otherwise 
the assertion holds trivially (void). 
Since $g$ is lower semicontinuous $g^{\leq0}$ is closed in  $\Omega$ (in the topology 
induced by the norm) and hence ($\Omega$ is closed) in $H$. Thus, when $g^{\leq0}$ is 
assumed to be convex the assertion follows 
from Proposition \ref{prop:0ConvCharacterize}\eqref{item:LevelSet}. The
assertion about lower semicontinuous quasiconvex functions is a consequence of
Proposition  \ref{prop:0ConvCharacterize}\eqref{item:LevelSet} and the fact
that all their level-sets are closed and convex.
\end{proof}

\begin{proposition}
\label{prop:0Properties} Let $H$ be a real Hilbert space and let $\Omega$ be a 
nonempty convex subset of $H$.

\begin{enumerate}
[(a)]

\item \label{item:ConvComb} If $g:\Omega\rightarrow\mathbb{R}$ is zero-convex
at $y$ with respect to both $t_{1}\in\partial^{0}g(y)$ and $t_{2}\in
\partial^{0}g(y)$, then it is zero-convex at $y$ with respect to any convex
combination of $t_{1}$ and $t_{2}$.

\item \label{item:Alpha} Suppose that $g:\Omega\rightarrow\mathbb{R}$ is
zero-convex at $y$ with respect to some $t\in\partial^{0}g(y)$. Given
$\alpha\geq0$, the function $\tilde{g}:=\alpha g$ is zero-convex at $y$ with
respect to $\tilde{t}=\alpha t$.

\item \label{ex:Max} Suppose that $g_{1},g_{2},\ldots,g_{m}$ are given
zero-convex functions at $y$. Then the envelope of $\{g_{i}\}_{i=1}^{m}$,
defined by $g(x):=\max\{g_{i}(x)\mid\,i=1,2,\ldots,m\}$, is also zero-convex
at $y$.

\item \label{item:sup} Suppose that $\{g_{i}|\,\,i\in I\}$ is a family of
(possibly infinitely many) lower semicontinuous zero-convex functions defined 
on a closed subset $\Omega$ of $H$. Then $g=\sup\{g_{i}\mid i\in I\}$ is zero-convex.

\item \label{item:Composition} Suppose that $g:\Omega\rightarrow\mathbb{R}$ is 
zero-convex and that it has a closed zero-level-set. 
Let $\psi:\mathbb{R}\rightarrow\mathbb{R}$  be a function satisfying 
$\psi(r)\leq0$ if and only if $r\leq0$. Then the composite function 
$\psi\circ g$ is zero-convex. In particular the above holds when $\Omega$ 
is closed, $g$ is lower semicontinuous and zero-convex, and $\psi$  satisfies 
the above-mentioned property. 

\item \label{item:NonVanishSubgrad} Suppose that $g:\Omega\rightarrow
\mathbb{R}$ has a nonempty zero-level-set. If $g$ is zero-convex at $y$ and if
$g(y)>0$, then any 0-subgradient $t\in\partial^{0}g(y)$ satisfies $t\neq0$.

\item \label{item:fg} Suppose that $f:\Omega\rightarrow\mathbb{R}$ and
$g:\Omega\rightarrow\mathbb{R}$ are zero-convex at $y\in\Omega$ and that their
zero-level-sets coincide. If $y$ is outside the zero-level-set and
$t\in\partial^{0}f(y)$, then $ct\in\partial^{0}g(y)$ for any $c\geq
g(y)/f(y)$. In particular, if $t\in\partial^{0}g(y)$, then so does $ct$ for all 
$c\geq1$.
\end{enumerate}
\end{proposition}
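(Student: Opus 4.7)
The plan is to prove each item by unwinding Definition \ref{def:0subdifferential} directly, or, for the parts that require more than routine algebra, by reducing to Proposition \ref{prop:0ConvCharacterize} and Corollary \ref{cor:Quasiconvex}.

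For the purely algebraic items (a), (b), and (f): in (a) I would take the convex combination of the two defining inequalities $g(y)+\langle t_i,x-y\rangle\leq 0$, $i=1,2$, to obtain the same inequality for $\lambda t_1+(1-\lambda)t_2$. In (b), for $\alpha>0$ simply multiply the defining inequality for $t$ by $\alpha$ and note $\tilde g^{\leq 0}=g^{\leq 0}$; the degenerate case $\alpha=0$ must be handled separately, since then $\tilde g\equiv 0$ and $\tilde g(y)+\langle 0,x-y\rangle=0\leq 0$ trivially. In (f), assume towards a contradiction that $t=0$; pick any $x\in g^{\leq 0}$ (nonempty by hypothesis) and plug into \eqref{eq:0subgradient} to get $g(y)\leq 0$, contradicting $g(y)>0$.

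For the max/sup items (c) and (d): the key observation is that $g^{\leq 0}=\bigcap_{i}g_i^{\leq 0}$, since $\sup_i g_i(x)\leq 0$ iff $g_i(x)\leq 0$ for every $i$. In (c) I would pick an index $i^\ast$ attaining $g(y)=\max_i g_i(y)$ and any $t_{i^\ast}\in\partial^0 g_{i^\ast}(y)$; for every $x\in g^{\leq 0}\subseteq g_{i^\ast}^{\leq 0}$ one has $g(y)+\langle t_{i^\ast},x-y\rangle=g_{i^\ast}(y)+\langle t_{i^\ast},x-y\rangle\leq 0$, so $t_{i^\ast}\in\partial^0 g(y)$. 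In (d), each $g_i^{\leq 0}$ is convex by Proposition \ref{prop:0ConvCharacterize}(b) and closed by lower semicontinuity of $g_i$ and closedness of $\Omega$; hence the intersection $g^{\leq 0}$ is closed and convex. Since $g$ is itself lower semicontinuous as a supremum of lower semicontinuous functions, Corollary \ref{cor:Quasiconvex} immediately yields zero-convexity of $g$.

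For the remaining structural items (e) and (g): in (e) the assumption on $\psi$ gives $(\psi\circ g)^{\leq 0}=g^{\leq 0}$, which is closed by the standing hypothesis (or, in the \emph{in particular} statement, by lower semicontinuity of $g$ together with closedness of $\Omega$) and convex by Proposition \ref{prop:0ConvCharacterize}(b); Proposition \ref{prop:0ConvCharacterize}(c) then gives zero-convexity of $\psi\circ g$. In (g), $f^{\leq 0}=g^{\leq 0}$ and $y$ outside this common set forces $f(y),g(y)>0$; scaling the inequality $f(y)+\langle t,x-y\rangle\leq 0$ by $c\geq g(y)/f(y)>0$ and using $cf(y)\geq g(y)$ yields $g(y)+\langle ct,x-y\rangle\leq 0$ for every $x\in g^{\leq 0}$, so $ct\in\partial^0 g(y)$. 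The \emph{in particular} clause is the special case $f=g$, giving $c\geq 1$.

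The only real wrinkles rather than genuine obstacles are the degenerate case $\alpha=0$ in (b) and the verification of lower semicontinuity and closed-convexity of $g^{\leq 0}$ in (d); none of the parts needs any machinery beyond Definition \ref{def:0subdifferential}, Proposition \ref{prop:0ConvCharacterize}, and Corollary \ref{cor:Quasiconvex}.
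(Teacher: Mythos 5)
Your proposal is correct and follows essentially the same route as the paper's proof for every item: direct manipulation of the defining inequality \eqref{eq:0subgradient} for (a), (b), (c), (f), (g), and reduction to Proposition \ref{prop:0ConvCharacterize} and Corollary \ref{cor:Quasiconvex} via convexity/closedness of the zero-level-set for (d) and (e). The only (minor) difference is that you explicitly treat the degenerate case $\alpha=0$ in (b), which the paper leaves implicit.
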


\begin{proof}
\begin{enumerate}
[(a)]

\item Follows from multiplication of \eqref{eq:0subgradient} by each of the
convex combination coefficients and adding the resulting inequalities.

\item Follows from multiplication of \eqref{eq:0subgradient} by $\alpha$.

\item For each $i$ consider the associated subgradient $t_{i}\in\partial
^{0}g_{i}(y)$. Let $j$ be the index for which $g(y)=g_{j}(y)$ and let
$t=t_{j}$. Suppose that $x\in H$ satisfies $g(x)\leq0$. Then $g_{j}(x)\leq0$,
thus, by \eqref{eq:0subgradient},%
\begin{equation}
\langle t,x-y\rangle+g(y)=\langle t_{j},x-y\rangle+g_{j}(y)\leq0,
\end{equation}
as required.

\item Let $x_{1},x_{2}\in g^{\leq0}$. Then $g_{i}(x_{1})\leq g(x_{1})\leq0$
and $g_{i}(x_{2})\leq0$ for any $i\in I$. From  
the convexity of the zero-level-set of $g_{i}$ (Proposition 
\ref{prop:0ConvCharacterize}\eqref{item:0ConvexImplyConvexZ}) it follows that 
$g_{i}(x)\leq0$ for all $x$ in the line segment $[x_{1},x_{2}]$  and all $i$. Thus $g(x)\leq0$ and hence $g^{\leq0}$ is convex.
It is well-known and not hard to verify that $g$ is lower semicontinuous. Therefore,  
Corollary \ref{cor:Quasiconvex} implies that $g$ is zero-convex. 

\item By assumption $g^{\leq 0}$ is closed and by 
Proposition \ref{prop:0ConvCharacterize}\eqref{item:0ConvexImplyConvexZ} it 
is convex. By the nature of $\psi$ the zero-level-sets of $g$ and of $\psi\circ g$ coincide. 
Thus Proposition \ref{prop:0ConvCharacterize}\eqref{item:LevelSet} implies that 
$\psi\circ g$ is zero-convex. Finally, if $\Omega$ is closed and $g$ is also 
lower semicontinuous, then $g^{\leq 0}$ is closed and the assertion follows from the above discussion. 

\item Let $t\in\partial^{0}g_{i}(y)$ and assume, to the contrary, that $t=0$.
If $x\in\Omega$ satisfies $g(x)\leq0$, then by \eqref{eq:0subgradient}%
\begin{equation}
0<g(y)=g(y)+\langle t,x-y\rangle\leq0,
\end{equation}
which is a contradiction.

\item From \eqref{eq:0subgradient} and the equality between the zero-level-sets
of the functions we have the inequality $\langle t,x-y\rangle\leq-f(y)$ for any $x\in
g^{\leq0}$. In addition, $f(y)>0$, thus,%
\begin{equation}
g(y)+\langle ct,x-y\rangle\leq g(y)-cf(y)\leq0,
\end{equation}
by the choice of $c$. Therefore, $ct\in\partial^{0}g(y)$. Finally, by taking
$f=g$ in the previous case, we conclude that if $t\in\partial^{0}g(y)$, then
$ct\in\partial^{0}g(y)$ for any $c\geq1$.
\end{enumerate}
\end{proof}

For later use  (see, e.g., the discussion after Condition
\ref{cond:boundedness} below) we present a few propositions which give sufficient
conditions for the existence of bounded 0-subgradients. These propositions also
give some ideas regarding the way of computing 0-subgradients in certain
settings. The first proposition is a generalized variation of an assertion
hidden in the proof of \cite[Proposition 7.8, (ii)$\Longrightarrow$(iii)]{bb96}
(namely, that the subgradients of a convex and Lipschitz function are
uniformly bounded by the Lipschitz constant).

\begin{proposition}
\label{prop:alpha} Let $H$ be a real Hilbert space and let $\Omega$ be nonempty, closed and convex. 
Suppose that $g:\Omega\rightarrow\mathbb{R}$ is zero-convex
and Lipschitz on $\Omega$ with a Lipschitz constant $\alpha>0$ and that $g^{\leq 0}\neq\emptyset$.  
Then for each $y\in\Omega$ there exists $t\in\partial^{0}g(y)$ satisfying $\Vert t\Vert
\leq\alpha$. As a matter of fact, for each $y\notin g^{\leq0}$ there exists
$t\in\partial^{0}g(y)$, defined by \eqref{eq:t}, satisfying%
\begin{equation}
\Vert t\Vert\leq\alpha\frac{\Vert y-z\Vert}{\Vert y-m\Vert},
\label{eq:alpha_z_m}%
\end{equation}
where $m\in M$ is the projection of $y$ onto a closed hyperplane strictly
separating $y$ from $g^{\leq0}$, and $z\in g^{\leq0}$ is arbitrary. In
particular, the above holds if $\Omega$ is contained in an open subset of $H$ 
and $g$ is G\^{a}teaux-differentiable on this open set 
and its derivative is uniformly bounded by some $\alpha>0$.
\end{proposition}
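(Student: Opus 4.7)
The plan is to split into two cases according to whether $y\in g^{\leq 0}$ and to exploit the explicit formula \eqref{eq:t} for a 0-subgradient given in Proposition \ref{prop:0ConvCharacterize}. If $y\in g^{\leq 0}$ then by that proposition I may take $t=0$, and $\|t\|=0\leq\alpha$ is trivial. So the content is in the case $y\notin g^{\leq 0}$.

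First I would observe that $g^{\leq 0}$ is closed and convex: closedness follows from continuity of $g$ (Lipschitz with constant $\alpha$ implies continuous), and convexity follows from Proposition \ref{prop:0ConvCharacterize}\eqref{item:0ConvexImplyConvexZ} applied to the zero-convex $g$. Hence for any closed hyperplane $M$ strictly separating $y$ from $g^{\leq 0}$, with $m\in M$ the orthogonal projection of $y$ onto $M$, Proposition \ref{prop:0ConvCharacterize}\eqref{item:LevelSet} guarantees that the vector $t$ defined by \eqref{eq:t} lies in $\partial^{0}g(y)$, and a direct calculation gives $\|t\|=g(y)/\|y-m\|$. The key step is then to bound $g(y)$: for any $z\in g^{\leq 0}$ we have $g(z)\leq 0$, so from the Lipschitz property
\begin{equation}
g(y)\leq g(y)-g(z)\leq |g(y)-g(z)|\leq \alpha\|y-z\|.
\end{equation}
Combining this with $\|t\|=g(y)/\|y-m\|$ yields \eqref{eq:alpha_z_m} directly.

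To obtain the sharper universal bound $\|t\|\leq\alpha$, I would apply Proposition \ref{prop:0ConvCharacterize}\eqref{item:tm} with $m$ taken to be the orthogonal projection of $y$ onto the nonempty, closed, convex set $g^{\leq 0}$ itself (projection exists and is unique in a Hilbert space onto such a set). Then $m\in g^{\leq 0}$, so taking $z=m$ in the bound above collapses the ratio in \eqref{eq:alpha_z_m} to $1$ and yields $\|t\|\leq\alpha$.

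For the final ``In particular'' assertion, it remains to show that Gâteaux-differentiability of $g$ on an open set $U\supseteq\Omega$ together with $\|g'(\cdot)\|\leq\alpha$ on $U$ implies that $g$ is Lipschitz on $\Omega$ with constant $\alpha$. For $x,y\in\Omega$, convexity of $\Omega$ places the segment $[x,y]$ in $\Omega\subset U$, and the scalar function $\varphi(s):=g(x+s(y-x))$ on $[0,1]$ is differentiable with $\varphi'(s)=\langle g'(x+s(y-x)),y-x\rangle$, so $|\varphi'(s)|\leq\alpha\|y-x\|$. The one-dimensional mean value theorem then yields $|g(y)-g(x)|\leq\alpha\|y-x\|$, reducing this situation to the main claim. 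I do not foresee a genuine obstacle; the only delicate bookkeeping point is to keep the two different uses of $m$ (projection onto a separating hyperplane versus projection onto $g^{\leq 0}$) clearly distinguished, since the statement explicitly formulates \eqref{eq:alpha_z_m} in terms of the former.
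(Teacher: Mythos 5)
Your proposal is correct and follows essentially the same route as the paper's proof: the case split on $y\in g^{\leq0}$, the norm computation $\Vert t\Vert=g(y)/\Vert y-m\Vert$ for $t$ from \eqref{eq:t}, the Lipschitz bound $g(y)\leq g(y)-g(z)\leq\alpha\Vert y-z\Vert$ giving \eqref{eq:alpha_z_m}, and the specialization $m=z=$ the orthogonal projection of $y$ onto $g^{\leq0}$ (via Proposition \ref{prop:0ConvCharacterize}\eqref{item:tm}) to obtain $\Vert t\Vert\leq\alpha$. The only cosmetic difference is that you spell out the mean-value-theorem argument for the G\^ateaux case, where the paper simply cites a reference.
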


\begin{proof}
If $y\in g^{\leq0}$, then we can take $t=0$. Otherwise, let $t\in\partial
^{0}g(y)$ be defined as in \eqref{eq:t} where $m\in M$ is the projection of
$y$ onto a closed hyperplane $M$ strictly separating $y$ from $g^{\leq0}$, the
existence of which is ensured by the Hahn-Banach theorem since $g^{\leq0}$ is
closed ($g$ is continuous and $\Omega$ is closed) and convex (Proposition
\ref{prop:0ConvCharacterize}\eqref{item:0ConvexImplyConvexZ}). From
\eqref{eq:t} we have $\Vert t\Vert=g(y)/\Vert y-m\Vert$. Let $z\in g^{\leq0}$
be given. Since $g(z)\leq0$ and $g(y)>0$, the fact that the Lipschitz
constant of $g$ is $\alpha$ implies that%
\begin{equation}
\Vert t\Vert\leq\frac{(g(y)-g(z))\Vert y-z\Vert}{\Vert y-z\Vert\Vert y-m\Vert
}\leq\alpha\frac{\Vert y-z\Vert}{\Vert y-m\Vert}.
\end{equation}
In particular, the above is true when $m$ is the best approximation
(orthogonal) projection of $y$ onto $g^{\leq0}$ and $M$ passes through $m$ and
is orthogonal to $y-m$ (see the proof of Proposition \ref{prop:0ConvCharacterize}\eqref{item:tm}). 
By taking $z=m$ we have $\Vert t\Vert\leq\alpha$, as claimed. 
Finally, a well-known consequence of the mean value theorem says that when $g$ 
is G\^{a}teaux-differentiable and its
derivative is bounded by some constant, then $g$  is Lipschitz with
this constant \cite[Theorem 1.8, p. 13]{AmbroProdi-Book-1993} and hence the assertion follows.
\end{proof}

\begin{proposition}
\label{prop:EpsilonConvex}  Let $H$ be a real Hilbert space and let $\Omega$ be nonempty and convex.
 Let $g:\Omega\rightarrow\mathbb{R}$ be zero-convex.
Suppose that $\emptyset \neq g^{\leq0}\subseteq B(c,r)$ 
where $B(c,r)$ is the open ball with center $c$ and radius $r>0$. 
Let $\epsilon>0$ be given. Suppose that $B(c,r+\epsilon)\subset \Omega$ and that $g$ is convex on 
this ball. 

\begin{enumerate}
[(a)]

\item If $g$ is Lipschitz on $\Omega$ with constant $\alpha$, then
for each $y\in\Omega$ there exists $t\in\partial^{0} g(y)$ satisfying
$\|t\|\leq\alpha(1+(2r/\epsilon))$. In fact, if $y\in B(c,r+\epsilon)$, then $t$ can be taken 
as a standard subgradient and if $y\notin B(c,r+\epsilon)$, then $t$ 
can be defined by \eqref{eq:t}.

\item If $g$ is bounded on $\Omega$ by some $\beta>0$, then for each
$y\in\Omega$ there exists $t\in\partial^{0}g(y)$ satisfying $\Vert t\Vert
\leq 4\beta/\epsilon$. In fact, if $y\in B(c,r+0.5\epsilon)$,
then $t$ can be taken as a standard subgradient, and if $y\notin B(c,r+0.5\epsilon)$, then 
this $t$ can be defined by \eqref{eq:t}.
\end{enumerate}
\end{proposition}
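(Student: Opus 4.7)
The plan is to handle parts (a) and (b) by a common strategy: split into two cases according to whether $y$ lies in a slightly enlarged ball around $c$ (inside which $g$ is convex) or in $\Omega$ outside it. Inside, I would take a standard subgradient of the convex function $g$; outside, I would invoke Proposition \ref{prop:0ConvCharacterize}\eqref{item:LevelSet} with a concrete separating hyperplane so that \eqref{eq:t} yields a computable bound.

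For $y$ outside the inner ball the natural hyperplane is the one tangent to $\overline{B(c,r)}$ that is perpendicular to $y-c$ at the point $m=c+r(y-c)/\|y-c\|$. Since $g^{\leq 0}\subseteq B(c,r)$, a short calculation with $e=(y-c)/\|y-c\|$ shows $\langle e,x-c\rangle<r$ for every $x\in g^{\leq 0}$ while $\langle e,y-c\rangle=\|y-c\|>r$, so this hyperplane strictly separates $y$ from $g^{\leq 0}$ in the sense required by Proposition \ref{prop:0ConvCharacterize}\eqref{item:LevelSet}; moreover $m$ is the orthogonal projection of $y$ onto it. Hence \eqref{eq:t} delivers $t\in\partial^{0}g(y)$ with $\|t\|=g(y)/\|y-m\|$ and $\|y-m\|=\|y-c\|-r$.

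For part (a), when $y\in B(c,r+\epsilon)$ the local convexity and $\alpha$-Lipschitz continuity of $g$ on this open set produce a standard subgradient at $y$ of norm at most $\alpha$, which is a 0-subgradient via Example \ref{ex:convex}. When $y\notin B(c,r+\epsilon)$ one has $\|y-m\|\geq\epsilon$, and I would bound $g(y)$ by the two-step Lipschitz estimate $g(y)\leq\alpha\|y-m\|+g(m)$ together with $g(m)\leq\alpha\,d(m,g^{\leq 0})\leq 2\alpha r$ (the latter using $\|m-c\|=r$ and $g^{\leq 0}\subset B(c,r)$, and the trivial observation that if $g(m)\leq 0$ the bound is still valid). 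Dividing by $\|y-m\|\geq\epsilon$ yields the stated $\alpha(1+2r/\epsilon)$.

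For part (b), when $y\in B(c,r+\epsilon/2)$ the function $g$ is convex and bounded by $\beta$ on $B(c,r+\epsilon)$, hence locally Lipschitz and subdifferentiable there; substituting $y+su$ into the subgradient inequality for $\|u\|=1$ and any $s$ slightly less than $\epsilon/2$, and then taking $u=t/\|t\|$, yields the classical estimate $\|t\|\leq 4\beta/\epsilon$ for any standard subgradient $t$. When $y\notin B(c,r+\epsilon/2)$ the hyperplane construction above gives $\|y-m\|\geq\epsilon/2$, whence $\|t\|=g(y)/\|y-m\|\leq 2\beta/\epsilon$, comfortably within the stated bound. The most delicate step, I expect, will be the two-step Lipschitz estimate in part (a), which routes through the intermediate point $m$ that need not itself lie in $g^{\leq 0}$; handling the possibility $g(m)>0$ is precisely what forces the factor $2r$ and explains the asymmetry between the constants $2r/\epsilon$ in (a) and $4/\epsilon$ in (b).
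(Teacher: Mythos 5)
Your proposal is correct and follows essentially the same route as the paper: standard subgradients on the enlarged ball where $g$ is convex, and for $y$ outside it the hyperplane through the projection of $y$ onto $\overline{B(c,r)}$ together with \eqref{eq:t}, giving $\|t\|=g(y)/\|y-m\|$ with $\|y-m\|\geq\epsilon$ (resp. $\epsilon/2$). The only cosmetic differences are that you rederive the norm bounds directly (via $g(y)\leq g(m)+\alpha\|y-m\|$ and via the subgradient inequality applied to $y+su$) where the paper instead invokes \eqref{eq:alpha_z_m} and the known Lipschitz-from-boundedness estimate; the resulting computations are the same.
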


\begin{proof}
\begin{enumerate}
[(a)]

\item Since $g$ is Lipschitz and convex on the ball $B(c,r+\epsilon)$ 
with a Lipschitz constant $\alpha$, it is known that its standard subgradients 
at points in this ball are bounded by $\alpha$ (see the proof of  \cite[Proposition 7.8, (ii)$\Longrightarrow$
(iii)]{bb96} and replace there $rB_{X}$ by our ball). Any standard subgradient 
is a 0-subgradient as explained in Example \ref{ex:convex} above. Now let $y\in\Omega$,
$y\notin B(c,r+\epsilon)$ and consider its projection $m$ onto the closed 
ball $\overline{B(c,r)}$. Consider also the closed hyperplane $M$ passing through $m$ and orthogonal to $y-m$. 
This $M$ separates the ball and hence $g^{\leq0}$ from $y$  and for $t$ defined by
\eqref{eq:t} we know from 
Proposition \ref{prop:0ConvCharacterize}\eqref{item:LevelSet} that $t\in\partial^{0}g(y)$. Let $z\in g^{\leq0}$ be
given. From \eqref{eq:alpha_z_m}, the fact that $z,m\in\overline{B(c,r)}$, and
the fact that $\Vert y-m\Vert\geq\epsilon$, we have%
\begin{equation}
\Vert t\Vert\leq\alpha\frac{\Vert y-z\Vert}{\Vert y-m\Vert}\leq\alpha
\frac{(\Vert y-m\Vert+\Vert m-z\Vert)}{\Vert y-m\Vert}\leq\alpha\left(
1+\frac{2r}{\epsilon}\right). \label{eq:2r_epsilon}%
\end{equation}
Since the right-hand side of \eqref{eq:2r_epsilon} is greater than $\alpha$ we
conclude that in both cases discussed above we can find $t\in\partial^{0}g(y)$
satisfying $\Vert t\Vert\leq\alpha(1+(2r/\epsilon))$.

\item The restriction of $g$ to the ball $B(c,r+\epsilon)$ is a convex
function which is bounded by $\beta$, thus it is a known fact 
(which follows, e.g., from the proof of \cite[Theorem 5.21, p. 69]{VanTiel1984}) that 
$g$ is Lipschitz on the ball $B(c,r+0.5\epsilon)$ with  constant 
$2\beta/(r+0.5\epsilon-r)=4\beta/\epsilon$. Hence, as explained before, any standard subgradient (which is 
a 0-subgradient) of a point $y$ in the ball is bounded by this Lipschitz constant. 
Now consider a point $y$ outside this ball. For $t$ defined by \eqref{eq:t} we know 
from Proposition \ref{prop:0ConvCharacterize}\eqref{item:LevelSet} that $t\in 
\partial^{0}g(y)$ and $\Vert t\Vert=g(y)/\Vert y-m\Vert\leq 2\beta/\epsilon$.
The assertion follows. 
\end{enumerate}
\end{proof}

\begin{remark}\label{rem:Unbounded0Subgrad}
In general, if some of the above conditions are not satisfied, then uniform
boundedness of a selection of 0-subgradients cannot be ensured even if the given function
is continuous and quasiconvex. A simple example is 
$g:\mathbb{R}\to\mathbb{R}$ defined by $g(y)=0$ when $y\leq0$, and
$g(y)=\sqrt{y}$ otherwise. This is a continuous and quasiconvex function, but 
when $y>0$ and $t\in\partial^{0} g(y)$, it follows from
\eqref{eq:0subgradient} (by putting $x=0\in g^{\leq0}$) that $t\geq
1/\sqrt{y}$.
\end{remark}

\section{Formulation of the zero-convex feasibility problem and the associated
algorithm\label{sec:alg}}
In this section we formulate our algorithm for solving the CFP with zero-convex functions. 
See Section \ref{sec:ComputationalResults} below for a concrete example (including computational 
results). See also 
Subsection \ref{subsec:ZeroConvex} and Section \ref{sec:zero-convex} above for related examples. 

Let $\Omega$ be 
a nonempty closed and convex subset of the real Hilbert space $H$.
Denote by $P_{\Omega}$ the best approximation (orthogonal) projection onto $\Omega$. 
Let $J$ be a finite or a 
countable set of indices. For each $j\in J$, let $g_{j}:\Omega\rightarrow
\mathbb{R}$ be a continuous zero-convex function.  For each $j\in J$ let%
\begin{equation}
C_{j}=\{x\in\Omega\mid g_{j}(x)\leq0\}\label{eq:C_j}%
\end{equation}
and suppose that
\begin{equation}
C=\bigcap_{j\in J}C_{j}\neq\emptyset.\label{eq:nonempty}%
\end{equation}
Let $\{i(n)\}_{n=0}^{\infty}$ be an infinite sequence indices $i(n)\in J$,
henceforth called a \textit{control sequence}, which is  \textit{almost
cyclic in a generalized sense}, i.e., $i:\mathbb{N}\cup \{0\}\rightarrow J$ 
and for each $j\in J$ there exists an $L_{j}\in\mathbb{N}$ such that the control selects
the subset $C_{j}$ at least once in each block of length $L_{j}$ of successive
indices of $J$. Formally,
\begin{equation}
\forall j\in J\text{, }\exists L_{j}\in\mathbb{N}\,\text{\ such that }%
\forall\,\,s\in\mathbb{N}\,\ \text{we have }j\in\{i(s),i(s+1),\ldots
,i(s+L_{j}-1)\}.\label{eq:control}%
\end{equation}
This definition seems to have been introduced by Browder \cite[Definition
5]{Browder1967}. See \cite[pp. 209--210]{Combettes1996} for an 
example with $L_{j}=2^{j},j\in J=\mathbb{N}$. A well-known particular case of  \eqref{eq:control} is the 
almost cyclic control, namely $J=\{1,2,\ldots,\ell\}$, $\ell\in \N$ is given, 
and there exists $L\in\N$ such that $L_j=L$ for all $j\in J$. 
The particular case of the almost cyclic control when $L=\ell$ is the 
cyclic control. For other types of controls which are 
related to \eqref{eq:control}, see \cite{CensorChenPajoohesh2011}. 

We consider the following algorithm.

\begin{alg}
\label{alg:perturbed-csp}$\,$\textbf{The Sequential Subgradient Projection 
(SSP) Method with Perturbations}\newline

{\noindent\textbf{Initialization:}} $x_{0}\in\Omega$ is arbitrary.\newline

{\noindent\textbf{Iterative Step:}}%
\begin{equation}
x_{n+1}=\left\{
\begin{array}
[c]{ll}%
P_{\Omega}\left(  x_{n}-\displaystyle\lambda_{n}{\frac{g_{i(n)}(x_{n}%
)}{\parallel t_{n}\parallel^{2}}}t_{n}+b_{n}\right)  , & \text{if }%
g_{i(n)}(x_{n})>0,\\
x_{n}, & \text{if }g_{i(n)}(x_{n})\leq0,
\end{array}
\right.  \label{eq:iterative}%
\end{equation}
where $t_{n}\in\partial^{0}g_{i(n)}(x_{n})$ and $\{b_{n}\}_{n=0}^{\infty}$ is a
sequence of elements in $H$.\newline

{\noindent\textbf{Relaxation Parameters: }}$\{\lambda_{n}\}_{n=0}^{\infty}$ is a
sequence of real numbers satisfying the inequality 
\begin{equation}\label{eq:relaxation}
\epsilon_{1}\leq \lambda_{n}\leq2-\epsilon_{2}, \quad \forall n \in \N\cup\{0\}
\end{equation} 
for fixed, arbitrarily small, $\epsilon_{1},\epsilon_{2}>0$ satisfying $\epsilon_1+\epsilon_2\leq 2$.\newline

{\noindent\textbf{Control Sequence:}} $\{i(n)\}_{n=0}^{\infty}$ is almost cyclic in 
a generalized sense, i.e., it obeys \eqref{eq:control}.
\end{alg}

Proposition \ref{prop:0Properties}\eqref{item:NonVanishSubgrad} ensures that
$t_{n}\neq0$ whenever $g_{i(n)}(x_{n})>0$, and hence $x_{n+1}$ is
well-defined. The elements of the sequence $\{b_{n}\}_{n=0}^{\infty}$ act as
perturbation terms in the algorithm. If $b_{n}=0$ for all $n$ then the
algorithm is the ordinary feasibility-seeking Cyclic Subgradient Projection 
(CSP) algorithm of \cite{cl82}, at least when the control is almost cyclic and
the functions $g_{j}$ are convex. When the first line of (\ref{eq:iterative})
occurs, then we say that the algorithm makes an \textit{active step} at step
$n+1$. When the second line of (\ref{eq:iterative}) occurs, then we say that
the algorithm makes \textit{an inactive step} at step $n+1$.

\section{Conditions for convergence\label{sec:conditions}}

For the convergence analysis we will need the following conditions.

\begin{condition}
\label{cond:b_n} For some $\mu>0$ which is any number greater than the distance
$d(x_{0},C)$ between $x_{0}$ and $C$, the following inequality is satisfied
\begin{equation}
\parallel b_{n}\parallel\leq\min\left( \mu,\frac{\epsilon_{1}\epsilon_{2}%
h_{n}^{2}}{2(5\mu+4h_{n})}\right)  ,\quad\forall n\in\mathbb{N}, \label{eq:b^nu}
\end{equation}
where%
\begin{equation}\label{eq:h_n}
h_{n}=\left\{
\begin{array}
[c]{ll}%
g_{i(n)}(x_{n})/\Vert t_{n}\Vert, & \text{if }g_{i(n)}(x_{n})>0,\\
0, & \text{if }g_{i(n)}(x_{n})\leq0.
\end{array}
\right.
\end{equation}

\end{condition}

The construction of the sequence $\{b_{n}\}_{n=0}^{\infty}$ of perturbations 
is done in an adaptive way, in contrast to other works dealing with
inexact algorithms (such as \cite{Eckstein1998,Rockafellar1976}) in which such
terms satisfy a certain fixed (nonadaptive) condition, e.g., the summability
condition $\sum_{n=1}^{\infty}\Vert b_{n}\Vert<\infty$ or some other fixed
conditions \cite{CorvellecFlam,DES1982,SolodovSvaiter2000,SolodovSvaiter2001}. 
In our case one computes $g_{i(n)}(x_{n})$ and $h_{n}$, and then chooses any
$b_{n}$ such that \eqref{eq:b^nu} holds. The only somewhat adaptive
perturbation terms that we are aware of appear in the very recent work 
\cite[relations (31)--(32)]{OteroIusem2012}.

It is interesting to note that Condition \ref{cond:b_n} actually implies that
$\sum_{n=1}^{\infty}\Vert b_{n}\Vert<\infty$: see Remark \ref{rem:Summability} below. 
This means that Condition \ref{cond:b_n} is less general than summability,
but this is not necessarily a bad thing. Indeed, as argued briefly in \cite[p.
216]{SolodovSvaiter2000} and in a more detailed form in \cite{SvaiterPC2012}, 
the summability condition is not satisfactory 
since it gives too much freedom for the perturbations and hence it may lead to
undesired practical results. On the one hand it allows perturbations of the
form $b_{n}=n^{100n}$ for each $n\leq10^{22222}$ and $b_{n}={10000}^{-n}$ 
for each $n>10^{22222}$, which means essentially no convergence at all in
practice. On the other hand, if $b_{n}={10000}^{-n}$ right at the beginning, then
this implies that very soon the perturbations will be too small for the
computing device to make any difference as perturbations proceed (but usually this
will not accelerate the convergence). In contrast, conditions such as Condition
\ref{cond:b_n} guide the user regarding the possible values of the
perturbation at the $n$-th iteration. These values are given in terms of
previous iterations and they do not depend on future iterations as in the case of 
the summability condition. In a sense they are more adaptive to the whole
problem: they are not too large and not too small.

As a final remark concerning Condition \ref{cond:b_n}, we note that in order
to verify \eqref{eq:b^nu} one has to know $\mu$, i.e., to have an upper bound on
the (yet unknown) distance $d(x_{0},C)$. However, in practice, when applying
Algorithm  \ref{alg:perturbed-csp}, one usually restricts the problem to a
large closed, bounded, and convex region $\Omega$ (say, a cube or a ball), due to limitations in the
computing device, and the diameter of this region can be taken as $\mu$. In
other cases one may have better estimates on the value of $\mu$. For instance,
if one of the involved subsets $C_{j}$ is bounded, then $C\subseteq C_{j}$ is
bounded and one can start from a point $x_{0}$ in $C_{j}$ and take the
diameter of $C_{j}$ as $\mu$.

The second condition for convergence is the following.

\begin{condition}
\label{cond:LowerSemiCont} For each $j\in J$, the function $g_{j}$ is zero-convex,  
  uniformly continuous on closed and bounded subsets of $\Omega$, and 
weakly sequential lower semicontinuous.
\end{condition}

The condition of uniform continuity holds in many cases, e.g., when the space
is finite-dimensional (recall that $g_{j}$ is continuous with respect to the
norm topology) or when $g_{j}$ satisfies a Lipschitz or H{\"{o}}lder
condition. The weakly sequential lower semicontinuity condition holds, for
instance, when the space is finite-dimensional, or when $g_{j}$ is quasiconvex
(by \cite[Proposition 10.23]{BauschkeCombettes2011} and the assumption that
$g_{j}$ is continuous). 

The last condition that we need is the following.

\begin{condition}
\label{cond:boundedness} There exists a number $K>0$ such that $\Vert t_{n}\Vert\leq
K$ for all $n\in\mathbb{N}\cup\{0\}$.
\end{condition}

It seems that verification of Condition \ref{cond:boundedness} requires
knowledge about the functions that define the subsets of the feasibility
problem as level-sets. A possible relevant property which may help here is
that of uniform boundedness of the subgradients on bounded sets. This property
is a standard one, frequently used in theorems on subgradient projection
methods, when the functions are convex. If the space is finite-dimensional,
then it holds, e.g., if the effective domain of all functions is the whole
space and there are finitely many functions, see, e.g., \cite[Proposition 7.8
and Corollary 7.9]{bb96}. If the space is infinite-dimensional but the
functions are uniformly continuous on closed and bounded subsets (as implied
by Condition  \ref{cond:LowerSemiCont}) and all the finitely many functions are
convex, then Condition  \ref{cond:boundedness} holds too, again from
\cite[Proposition 7.8]{bb96}. When infinitely many functions are involved in
the algorithm and all of them are Lipschitz with uniformly bounded Lipschitz
constants, then Condition \ref{cond:boundedness} holds too from
\cite[Proposition 7.8]{bb96} since in this case the proof of this proposition
implies that $K$ can be any upper bound on the Lipschitz constants.

In analogy with the above we want to define the property of uniform
boundedness on bounded sets of the $0$-subgradients. However, because of
Proposition \ref{prop:0Properties}\eqref{item:fg} one cannot expect to have
uniform boundedness of all $t\in\partial^{0}g_{j}(y)$ for a given $y$ and a
given $j$, namely that $\Vert t\Vert\leq K$ for all $t\in\partial^{0}g_{j}(y)$. 
It turns out that for all our purposes it is enough that a selection of
0-subgradients will be uniformly bounded, and this is formulated in the
following definition.

\begin{definition}
\label{def:bounded} Given a family $\{g_{j}\}_{j\in J}$ of zero-convex
functions defined on $\Omega\subseteq H$, if for any bounded set
$U\subseteq\Omega$ there exists a constant $K$, called \texttt{a uniform
bound}, such that for \texttt{all} $j\in J$ and \texttt{all} $x\in U$ there
exist at least one $0$-subgradient $t\in\partial^{0}g_{j}(x)$ satisfying
$\left\Vert t\right\Vert \leq K$, then we say that the family of zero-convex
functions has the property of \texttt{partial uniform boundedness on bounded
sets of the }$0$-\texttt{subgradients}.
\end{definition}

As shown in Lemma \ref{lem:fejerM} below, the sequence $\{x_{n}\}_{n=0} 
^{\infty}$ generated by Algorithm \ref{alg:perturbed-csp} is contained in a
bounded set, independently of the assumption imposed by Condition
\ref{cond:boundedness}. Thus, if it is assumed that the family $\{g_{j}%
\}_{j\in J}$ of zero-convex functions has the partial uniform boundedness on
bounded sets of the 0-subgradients, then Condition \ref{cond:boundedness}
holds for the selection of the corresponding $0$-subgradient $t_n\in\partial
^{0}g_{i(n)}(x_n)$, $n\in\N\cup\{0\}$. Example \ref{ex:convex} (with convex functions), as well as Examples \ref{ex:polynomial}--\ref{ex:Voronoi} and  Propositions  \ref{prop:alpha}-\ref{prop:EpsilonConvex} 
show that Condition \ref{cond:boundedness} can hold in various scenarios. For instance, 
the condition holds if we assume that there is a uniform Lipschitz constant for all 
of the functions $g_j$ and then use Proposition \ref{prop:alpha} 
(with $t_n=0$ if $y\in g_{i(n)}^{\leq 0}$ and with $t_n$ defined by \eqref{eq:t} 
when $y\notin g_{i(n)}^{\leq 0}$). On the other hand, 
Remark \ref{rem:Unbounded0Subgrad} shows that this condition can be violated 
in some exotic cases. 

\section{The convergence theorem\label{sec:convergence}}

The following theorem affirms convergence of the SSP feasibility-seeking
algorithm with perturbations.

\begin{theorem}
\label{thm:resiliency} In the framework of, and under the assumptions in,
Sections \ref{sec:alg} and \ref{sec:conditions}, any sequence $\{x_{n}\}_{n=0}^{\infty},$ 
generated by Algorithm \ref{alg:perturbed-csp}, converges 
weakly to a point in the set $B[x_{0},2\mu]\cap C$, where $B[x_{0},2\mu]$ is the closed
ball of radius $2\mu$ centered at $x_{0}$ and $\mu$ is a fixed positive number greater than  $d(x_0,C)$. In addition, if either the space is
finite-dimensional or if the set $B[x_{0},2\mu]\cap C$ has a nonempty interior with respect to $H$, then the sequence converges in norm to a point in this set.
\end{theorem}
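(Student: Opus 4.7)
The plan is to embed Algorithm \ref{alg:perturbed-csp} in the classical quasi-Fej\'er monotone framework for feasibility-seeking, replacing the usual subgradient inequality by the 0-subgradient inequality \eqref{eq:0subgradient} and using the adaptive bound \eqref{eq:b^nu} to tame the perturbations. First I would fix an arbitrary $z\in C$ (eventually taken to be the orthogonal projection of $x_0$ onto $C$, which exists because $C$ is the intersection of closed convex sets contained in $\Omega$) and derive a quasi-Fej\'er-type recursion. At an active step, set $y_n := x_n-\lambda_n(g_{i(n)}(x_n)/\|t_n\|^2)t_n+b_n$ so that $x_{n+1}=P_\Omega(y_n)$; nonexpansivity of $P_\Omega$ and $z\in\Omega$ give $\|x_{n+1}-z\|\leq\|y_n-z\|$. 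Expanding $\|y_n-z\|^2$, invoking the 0-subgradient inequality $g_{i(n)}(x_n)+\langle t_n,z-x_n\rangle\leq 0$ (valid because $z\in C\subseteq g_{i(n)}^{\leq 0}$), and using $\lambda_n(2-\lambda_n)\geq\epsilon_1\epsilon_2$, one arrives at an inequality of the form
\[
\|x_{n+1}-z\|^2 \leq \|x_n-z\|^2 - \epsilon_1\epsilon_2 h_n^2 + 2\|b_n\|\bigl(\|x_n-z\|+h_n\bigr) + \|b_n\|^2.
\]
A careful induction using \eqref{eq:b^nu}, which is designed precisely so that the perturbation terms are absorbed by at most half of $\epsilon_1\epsilon_2 h_n^2$ as long as $\|x_n-z\|$ stays below a constant multiple of $\mu$, then gives both $\{x_n\}\subset B[x_0,2\mu]$ and $\sum_n h_n^2<\infty$; in particular $h_n\to 0$. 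This is the content of the anticipated Lemma \ref{lem:fejerM}.

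Second, I would convert $h_n\to 0$ into control on every $g_j$. By Condition \ref{cond:boundedness} we have $\|t_n\|\leq K$, so the positive part of $g_{i(n)}(x_n)$ equals $h_n\|t_n\|$ and tends to $0$. Simultaneously $\|x_{n+1}-x_n\|\leq\lambda_n h_n+\|b_n\|\to 0$. For a fixed $j\in J$, the generalized almost cyclicity \eqref{eq:control} produces an index $k_n(j)\in\{n,\ldots,n+L_j-1\}$ with $i(k_n(j))=j$; telescoping yields $\|x_{k_n(j)}-x_n\|\to 0$, and the uniform continuity of $g_j$ on the bounded set $B[x_0,2\mu]\cap\Omega$ (Condition \ref{cond:LowerSemiCont}) gives $g_j(x_n)-g_j(x_{k_n(j)})\to 0$. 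Hence $\limsup_n g_j(x_n)\leq 0$ for every $j\in J$.

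Third, I would pass to weak cluster points using weak sequential lower semicontinuity and an Opial-type argument. Any weak cluster point $x^*$ of $\{x_n\}$ lies in $\Omega$ (which is weakly closed, being convex and closed) and, by Condition \ref{cond:LowerSemiCont} combined with the previous step, satisfies $g_j(x^*)\leq\liminf_k g_j(x_{n_k})\leq 0$ for every $j$, so $x^*\in C$. Iterating the quasi-Fej\'er recursion shows that $\{\|x_n-z\|\}$ converges for each $z\in C$; the standard two-cluster-point argument (cf.\ \cite[Corollary 6.10(i)]{BauschkeCombettes2001}) then rules out two distinct weak cluster points in $C$, so $\{x_n\}$ converges weakly to a single point of $B[x_0,2\mu]\cap C$. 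For the strong-convergence statement, finite-dimensionality trivially upgrades weak to norm convergence; and if $B[x_0,2\mu]\cap C$ has nonempty interior, picking an interior $z$ and applying the Fej\'er-type inequality at points of the form $z+\rho u$ for unit vectors $u$ forces $\|x_n-x^*\|\to 0$ by the classical argument for Fej\'er-monotone sequences with respect to a set having nonempty interior.

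The main obstacle is the circularity in the first step: the bound \eqref{eq:b^nu} on $\|b_n\|$ depends on $h_n$, which depends on $x_n$, while the induction hypothesis $\{x_n\}\subset B[x_0,2\mu]$ is itself what permits one to bound $\|x_n-z\|$ in the error terms. Threading this dependency forces a careful bookkeeping that justifies the precise constants $5\mu$ and $4h_n$ appearing in \eqref{eq:b^nu}; the inactive-step case (where $x_{n+1}=x_n$) and iterations at which $h_n=0$ must also be accommodated uniformly in the recursion.
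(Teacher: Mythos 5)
Your proposal is correct and follows essentially the same route as the paper: the quasi-Fej\'er recursion absorbed by half of $\epsilon_1\epsilon_2 h_n^2$ via the adaptive bound \eqref{eq:b^nu} (Lemma \ref{lem:fejerM}), the passage from $h_n\to 0$ and $\|x_{n+1}-x_n\|\to 0$ through the generalized almost cyclic control and uniform continuity to $\limsup_n g_j(x_n)\leq 0$ (Lemmas \ref{lem:n_0}--\ref{lem:n_2}), weak sequential lower semicontinuity to place cluster points in $C$ (Lemma \ref{lem:cluster}), and the Opial/Browder argument for weak convergence with the nonempty-interior upgrade to strong convergence. The only deviations are cosmetic (your cross-term coefficient should be $4h_n\|b_n\|$ rather than $2h_n\|b_n\|$ since $\lambda_n\leq 2$, which you implicitly defer to the constant bookkeeping you already flag).
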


The proof of Theorem \ref{thm:resiliency} is based on the following lemmas.

\begin{lemma}
\label{lem:fejerM} In the framework of, and under the assumptions in, Sections
\ref{sec:alg} and \ref{sec:conditions}, let $q$ be any real number in the
interval $[\mu,2\mu]$ and let $x\in C$ be such that $\Vert x_{0}-x\Vert\leq q$.
Then any sequence $\{x_{n}\}_{n=0}^{\infty}$, generated by Algorithm 
\ref{alg:perturbed-csp}, is contained in $\Omega$ and has the property that 
\begin{equation}
\Vert x_{n+1}-x\Vert^{2}\leq\Vert x_{n}-x\Vert^{2}-0.5\epsilon_{1}\epsilon
_{2}h_{n}^{2}\label{eq:ActiveStep}
\end{equation}
for each $n\in\mathbb{N}\cup\{0\}$.
\end{lemma}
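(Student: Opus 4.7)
The plan is to proceed by induction on $n$, simultaneously establishing three statements: (i) $x_n\in\Omega$, (ii) $\|x_n-x\|\le q$, and (iii) the inequality \eqref{eq:ActiveStep} between consecutive iterates. The base case $n=0$ holds by hypothesis. Membership in $\Omega$ at each step is automatic: either $x_{n+1}=x_n$, or $x_{n+1}$ is in the range of $P_{\Omega}$. The inactive case $g_{i(n)}(x_n)\le 0$ is immediate: $x_{n+1}=x_n$ and $h_n=0$, so \eqref{eq:ActiveStep} reduces to an equality, and the induction hypothesis on $\|x_n-x\|$ is preserved trivially.

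The main work is the active case $g_{i(n)}(x_n)>0$. Set $y_n:=x_n-\lambda_n g_{i(n)}(x_n)\|t_n\|^{-2}t_n$, so $x_{n+1}=P_{\Omega}(y_n+b_n)$. Since $x\in C\subseteq\Omega$, the nonexpansiveness of $P_{\Omega}$ yields
\begin{equation}
\|x_{n+1}-x\|^{2}\le\|y_n+b_n-x\|^{2}=\|y_n-x\|^{2}+2\langle y_n-x,b_n\rangle+\|b_n\|^{2}.
\end{equation}
For the first term, expand $\|y_n-x\|^{2}$ and apply the $0$-subgradient inequality \eqref{eq:0subgradient} at $x_n$: because $x\in C_{i(n)}$, i.e. $x\in g_{i(n)}^{\leq 0}$, one has $\langle t_n,x_n-x\rangle\ge g_{i(n)}(x_n)$. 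This gives the standard relaxed subgradient projection bound
\begin{equation}
\|y_n-x\|^{2}\le\|x_n-x\|^{2}-\lambda_n(2-\lambda_n)h_n^{2}.
\end{equation}
The assumption $\epsilon_1+\epsilon_2\le 2$ together with monotonicity of $\lambda\mapsto\lambda(2-\lambda)$ on each side of $\lambda=1$ gives $\lambda_n(2-\lambda_n)\ge\epsilon_1\epsilon_2$ throughout $[\epsilon_1,2-\epsilon_2]$, hence $\|y_n-x\|^{2}\le\|x_n-x\|^{2}-\epsilon_1\epsilon_2 h_n^{2}$.

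For the perturbation contribution, use Cauchy--Schwarz and the triangle inequality: by the inductive hypothesis and $\|s_n\|=\lambda_n h_n\le 2h_n$,
\begin{equation}
\|y_n-x\|\le\|x_n-x\|+\lambda_n h_n\le q+2h_n\le 2\mu+2h_n,
\end{equation}
while $\|b_n\|^{2}\le\mu\|b_n\|$ by the first bound in \eqref{eq:b^nu}. Therefore
\begin{equation}
2\langle y_n-x,b_n\rangle+\|b_n\|^{2}\le(4\mu+4h_n)\|b_n\|+\mu\|b_n\|=(5\mu+4h_n)\|b_n\|.
\end{equation}
The second bound in \eqref{eq:b^nu} is precisely calibrated so that $(5\mu+4h_n)\|b_n\|\le 0.5\,\epsilon_1\epsilon_2 h_n^{2}$. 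Combining the two estimates yields $\|x_{n+1}-x\|^{2}\le\|x_n-x\|^{2}-0.5\,\epsilon_1\epsilon_2 h_n^{2}$, and in particular $\|x_{n+1}-x\|\le\|x_n-x\|\le q$, closing the induction.

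The only subtlety is the bookkeeping in the perturbation estimate: the inductive hypothesis $\|x_n-x\|\le q\le 2\mu$ must be available \emph{before} we can control $\|y_n-x\|$, and the adaptive bound on $\|b_n\|$ in Condition \ref{cond:b_n} is then designed exactly to absorb both the $5\mu\|b_n\|$ part (coming from $\|x_n-x\|$ and $\|b_n\|^2$) and the $4h_n\|b_n\|$ part (coming from the triangle-inequality surcharge $\lambda_n h_n$). Choosing to bound $\|y_n-x\|$ via $\|x_n-x\|+\lambda_n h_n$, rather than the tighter but $h_n$-free $\|y_n-x\|\le\|x_n-x\|$, is what produces the denominator $2(5\mu+4h_n)$ that appears in \eqref{eq:b^nu}.
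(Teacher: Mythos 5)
Your proof is correct and follows essentially the same route as the paper's: induction to keep $\Vert x_n-x\Vert\leq q\leq 2\mu$, nonexpansiveness of $P_{\Omega}$, the $0$-subgradient inequality to get the $-\lambda_n(2-\lambda_n)h_n^2$ decrease, and Cauchy--Schwarz to collect the perturbation terms into exactly the same quantity $(5\mu+4h_n)\Vert b_n\Vert$ that Condition \ref{cond:b_n} is calibrated to absorb. The only difference is cosmetic bookkeeping (you expand $\Vert(y_n-x)+b_n\Vert^2$ while the paper expands $\Vert(x_n-x)+(b_n-a_nt_n)\Vert^2$), which leads to identical estimates.
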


\begin{proof}
Simple induction shows that $x_{n}\in\Omega$ for all $n\in\mathbb{N}\cup\{0\}$. The
assumptions $C\neq\emptyset$ and $d(x_{0},C)<\mu\leq q$ imply that there does
exist an $x\in C$ such that $\Vert x_{0}-x\Vert\leq q$. Suppose that an active
step occurs at step $n+1$ and denote 
\begin{equation}\label{eq:a_n}
a_{n}:=\lambda_{n}g_{i(n)}(x_{n})/\Vert t_{n}\Vert^{2}. 
\end{equation}
Since $P_{\Omega}$ is nonexpansive 
\cite[pp. 59-61]{BauschkeCombettes2011}, the equality
$x=P_{\Omega}(x)$ and direct calculation show that
\begin{align}
\Vert x_{n+1}-x\Vert^{2} &  =\Vert P_{\Omega}(x_{n}-a_{n}t_{n}+b_{n}
)-P_{\Omega}(x)\Vert^{2}\nonumber\\
&  \leq\Vert x_{n}-a_{n}t_{n}+b_{n}-x\Vert^{2}\nonumber\\
&  =\Vert x_{n}-x\Vert^{2}+\Vert b_{n}-a_{n}t_{n}\Vert^{2}+2\langle
b_{n}-a_{n}t_{n},x_{n}-x\rangle\nonumber\\
&  =\Vert x_{n}-x\Vert^{2}+\Vert b_{n}\Vert^{2}+|a_{n}|^{2}\Vert t_{n}
\Vert^{2}-2a_{n}\langle b_{n},t_{n}\rangle\nonumber\\
&  \quad+2\langle b_{n},x_{n}-x\rangle-2a_{n}\langle t_{n},x_{n}
-x\rangle.\label{eq:4.3}%
\end{align}
Since $x\in C$ it follows that $g_{i(n)}(x)\leq0$, thus, by the 
$0$-subgradient inequality \eqref{eq:0subgradient} and the fact that $a_{n}\geq0$
we get
\begin{equation}
-2a_{n}\langle t_{n},x_{n}-x\rangle\leq-2a_{n}g_{i(n)}(x_{n}).\label{eq:4.4}%
\end{equation}
From \eqref{eq:h_n}, \eqref{eq:a_n}, \eqref{eq:4.3}, \eqref{eq:4.4},  and the Cauchy-Schwarz inequality,
\begin{align}
\Vert x_{n+1}-x\Vert^{2} &  \leq\Vert x_{n}-x\Vert^{2}+\Vert b_{n}\Vert
^{2}+a_{n}^{2}\Vert t_{n}\Vert^{2}-2a_{n}\langle b_{n},t_{n}\rangle+2\Vert
b_{n}\Vert\Vert x_{n}-x\Vert\nonumber\\
&  \quad-2a_{n}g_{i(n)}(x_{n})\nonumber\\
&  =\Vert x_{n}-x\Vert^{2}+(\lambda_{n}^{2}-2\lambda_{n})h_{n}^{2}+\Vert
b_{n}\Vert^{2}-2a_{n}\langle b_{n},t_{n}\rangle+2\Vert b_{n}\Vert\Vert
x_{n}-x\Vert.
\end{align}
By the properties of $\lambda_{n}$, the Cauchy-Schwarz inequality, the
definition of $h_{n}$, and the fact that $\Vert b_{n}\Vert\leq \mu$, we reach
\begin{align}
\Vert x_{n+1}-x\Vert^{2} &  \leq\Vert x_{n}-x\Vert^{2}-\epsilon_{1}%
\epsilon_{2}h_{n}^{2}+\Vert b_{n}\Vert(\Vert b_{n}\Vert+2\Vert x_{n}%
-x\Vert)+2a_{n}\Vert t_{n}\Vert\Vert b_{n}\Vert\nonumber\\
&  \leq\Vert x_{n}-x\Vert^{2}-\epsilon_{1}\epsilon_{2}h_{n}^{2}+\Vert
b_{n}\Vert(\mu+2\Vert x_{n}-x\Vert+4h_{n}).\label{eq:Mh_n}%
\end{align}
Now let $n=0$. If an active step occurs at step $1$, then
\eqref{eq:ActiveStep} holds because $\Vert x_{0}-x\Vert\leq q\leq2\mu$,
by (\ref{eq:Mh_n}), and by \eqref{eq:b^nu}. In particular 
$\Vert x_{1}-x\Vert\leq\Vert x_{0}-x\Vert\leq q$. If an inactive step 
occurs at step $1$, then obviously \eqref{eq:ActiveStep} holds since 
$h_{0}=0$ and $x_{n+1}=x_{n}$. In particular, $\Vert x_{1}-x\Vert\leq q$.

Continuing the induction, suppose that \eqref{eq:ActiveStep} holds up to some
$n\geq1$. If an inactive step occurs at step $n+1$ of Algorithm \ref{alg:perturbed-csp}, 
then $h_{n}=0$ and obviously \eqref{eq:ActiveStep} holds. Otherwise, since by 
the induction hypothesis $\Vert x_{n}-x\Vert\leq\cdots\leq\Vert x_{0}-x\Vert\leq q\leq2\mu$,
we obtain from \eqref{eq:Mh_n} and \eqref{eq:b^nu} the inequality 
\eqref{eq:ActiveStep}. Therefore, \eqref{eq:ActiveStep} holds for $n+1$ and
hence for every $n\in\mathbb{N}\cup\{0\}$.
\end{proof}

\begin{lemma}
\label{lem:n_0} Under the assumptions of Lemma \ref{lem:fejerM}, there exist
an integer $\nu_{0}\in\mathbb{N}$ and a real $\alpha>0$ such that%
\begin{equation}
\Vert x_{n+1}-x_{n}\Vert^{2}\leq\alpha(\Vert x_{n}-x\Vert^{2}-\Vert
x_{n+1}-x\Vert^{2}),\label{eq:estimate_C1C2}%
\end{equation}
for all $n\geq\nu_{0}$.
\end{lemma}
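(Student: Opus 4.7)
\textbf{Proof plan for Lemma \ref{lem:n_0}.} The idea is to directly control $\|x_{n+1}-x_n\|$ in terms of $h_n$ and then invoke Lemma \ref{lem:fejerM} to convert $h_n^2$ into the telescoping difference $\|x_n-x\|^2-\|x_{n+1}-x\|^2$. First I would split according to whether step $n+1$ is active or inactive. If it is inactive, then $x_{n+1}=x_n$ and $h_n=0$, so the left-hand side of \eqref{eq:estimate_C1C2} is $0$ while the right-hand side is nonnegative by \eqref{eq:ActiveStep}, and the inequality is vacuous. Thus the interesting case is the active step.

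For an active step, using $x_n=P_\Omega(x_n)$ (since $x_n\in\Omega$ by Lemma \ref{lem:fejerM}), the nonexpansivity of $P_\Omega$, the triangle inequality and the definitions of $a_n$ and $h_n$ give
\begin{equation}
\|x_{n+1}-x_n\|=\|P_\Omega(x_n-a_nt_n+b_n)-P_\Omega(x_n)\|\le a_n\|t_n\|+\|b_n\|=\lambda_n h_n+\|b_n\|.
\end{equation}
Since $\lambda_n\le 2-\epsilon_2$, the first summand is at most $(2-\epsilon_2)h_n$. For the perturbation term, I would exploit Condition \ref{cond:b_n} as follows: because $5\mu+4h_n\ge 4h_n$ whenever $h_n>0$,
\begin{equation}
\|b_n\|\le\frac{\epsilon_1\epsilon_2 h_n^2}{2(5\mu+4h_n)}\le\frac{\epsilon_1\epsilon_2}{8}\,h_n.
\end{equation}
The key observation is that although the defining bound on $\|b_n\|$ in \eqref{eq:b^nu} is quadratic in $h_n$, the factor $4h_n$ in the denominator yields an \emph{automatic linear control} of $\|b_n\|$ by $h_n$, with a small universal constant.

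Combining the two estimates gives $\|x_{n+1}-x_n\|\le\bigl(2-\epsilon_2+\epsilon_1\epsilon_2/8\bigr)h_n\le 2h_n$, since $\epsilon_2(1-\epsilon_1/8)\ge 0$ (recall $\epsilon_1\le 2-\epsilon_2<8$). Squaring and plugging into Lemma \ref{lem:fejerM} yields
\begin{equation}
\|x_{n+1}-x_n\|^2\le 4h_n^2\le\frac{8}{\epsilon_1\epsilon_2}\bigl(\|x_n-x\|^2-\|x_{n+1}-x\|^2\bigr),
\end{equation}
so $\alpha:=8/(\epsilon_1\epsilon_2)$ works, and in fact the bound holds for every $n\in\mathbb{N}\cup\{0\}$, so any $\nu_0\in\mathbb{N}$ (for instance $\nu_0=1$) is admissible.

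The only delicate step is the control of $\|b_n\|$: one might initially fear that the $\mu$ branch of the minimum in \eqref{eq:b^nu} could dominate when $h_n$ is large and prevent a linear bound in $h_n$, which would be the reason to require $n\ge\nu_0$ (using $h_n\to 0$, which follows by summing \eqref{eq:ActiveStep}). However, because $\|b_n\|$ is simultaneously bounded by \emph{both} terms of the minimum, one can always use the quadratic-over-linear term and the trivial lower bound $5\mu+4h_n\ge 4h_n$ to obtain the clean linear estimate above without appealing to asymptotic smallness of $h_n$.
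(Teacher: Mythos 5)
Your proof is correct, and it departs from the paper's own argument in one substantive way: the treatment of the perturbation term. The paper squares the displacement first, expanding $\Vert x_{n+1}-x_{n}\Vert^{2}\leq\Vert b_{n}\Vert^{2}+4\Vert b_{n}\Vert h_{n}+4h_{n}^{2}$, and then controls $\Vert b_{n}\Vert$ via the quadratic estimate $\Vert b_{n}\Vert\leq(\epsilon_{1}\epsilon_{2}/(10\mu))h_{n}^{2}$ coming from the $5\mu$ part of the denominator in \eqref{eq:b^nu}; this produces terms of order $h_{n}^{3}$ and $h_{n}^{4}$, which are dominated by $h_{n}^{2}$ only once $h_{n}<1$, and that is precisely why the paper introduces the threshold $\nu_{0}$ (using that $h_{n}\to 0$, which follows from \eqref{eq:h_estimate} and the convergence of $\{\Vert x_{n}-x\Vert\}_{n=0}^{\infty}$). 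You instead exploit the $4h_{n}$ part of the same denominator to get the linear bound $\Vert b_{n}\Vert\leq(\epsilon_{1}\epsilon_{2}/8)h_{n}$, apply the triangle inequality \emph{before} squaring to obtain $\Vert x_{n+1}-x_{n}\Vert\leq(2-\epsilon_{2}+\epsilon_{1}\epsilon_{2}/8)h_{n}\leq 2h_{n}$, and conclude with the explicit constant $\alpha=8/(\epsilon_{1}\epsilon_{2})$ valid for every $n$. All the individual steps check out: $a_{n}\Vert t_{n}\Vert=\lambda_{n}h_{n}$ in an active step, the inequality $\epsilon_{1}\epsilon_{2}/8\leq\epsilon_{2}$ follows from $\epsilon_{1}<2$, the division by $h_{n}$ is legitimate because $h_{n}>0$ in an active step, and the inactive case is handled correctly. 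What your route buys is a cleaner, fully explicit constant and the observation that the threshold $\nu_{0}$ is not actually needed for this lemma; what the paper's route retains is a template (separate control of $\Vert b_{n}\Vert^{2}$ and of the cross term after squaring) that it reuses in the subsequent summability remark, but nothing downstream depends on $\nu_{0}$ being strictly positive, so your stronger conclusion causes no problems.
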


\begin{proof}

By \eqref{eq:ActiveStep} we have
\begin{equation}
h_{n}^{2}\leq\alpha_{1}(\Vert x_{n}-x\Vert^{2}-\Vert x_{n+1}-x\Vert
^{2}),\label{eq:h_estimate}
\end{equation}
where $\alpha_{1}=2/(\epsilon_{1}\epsilon_{2})$. The fact that $P_{\Omega
}$ is nonexpansive, the equality $x_{n}=P_{\Omega}(x_{n})$, the inequality
$|\lambda_{n}|\leq2$, \eqref{eq:h_estimate}, the Cauchy-Schwarz inequality, 
and \eqref{eq:iterative} imply that
\begin{equation}
\begin{array}
[c]{lll}%
\Vert x_{n+1}-x_{n}\Vert^{2} & \leq & \Vert b_{n}-(\lambda_{n}h_{n}t_{n}/\Vert
t_{n}\Vert)\Vert^{2}\\
& \leq & \Vert b_{n}\Vert^{2}+4\Vert b_{n}\Vert h_{n}+4\alpha_{1}(\Vert
x_{n}-x\Vert^{2}-\Vert x_{n+1}-x\Vert^{2}),
\end{array}
\label{eq:estimateC1}
\end{equation}
whenever an active step occurs. However, \eqref{eq:estimateC1} holds also when
an inactive step occurs since in that case the left-hand side is 0 and the
right-hand side is nonnegative (from Lemma {lem:fejerM}). From Lemma \ref{lem:fejerM}, the sequence 
$\{\Vert x_{n}-x\Vert\}_{n=0}^{\infty}$ is decreasing and bounded from below
and hence converges to a limit. Therefore, it is a Cauchy sequence and from
\eqref{eq:h_estimate} it follows that there exists a positive integer $\nu_{0}$ 
having the property that $h_{n}<1$ whenever $n\geq\nu_{0}$. Hence $h_{n}^{4}\leq h_{n}^{3}\leq
h_{n}^{2}$ for each $n\geq\nu_{0}$. Let $\alpha_{2}=(\epsilon_{1}\epsilon
_{2}/(10\mu))^{2}$. From \eqref{eq:b^nu} and \eqref{eq:h_estimate} it follows
that%
\begin{align}
\Vert b_{n}\Vert^{2} &  \leq(\epsilon_{1}\epsilon_{2}h_{n}^{2}/(2\cdot
(5\mu+4h_{n})))^{2}\nonumber\\
&  \leq\alpha_{2}h_{n}^{4}\leq\alpha_{2}h_{n}^{2}\leq\alpha_{1}\alpha
_{2}(\Vert x_{n}-x\Vert^{2}-\Vert x_{n+1}-x\Vert^{2}).
\end{align}
From \eqref{eq:b^nu}, the inequality $h_{n}^{3}\leq h_{n}^{2}$, and
\eqref{eq:h_estimate} it follows that%
\begin{equation}
4\Vert b_{n}\Vert h_{n}\leq2(\epsilon_{1}\epsilon_{2}/(5\mu))\alpha_{1}(\Vert
x_{n}-x\Vert^{2}-\Vert x_{n+1}-x\Vert^{2}).
\end{equation}
This and \eqref{eq:estimateC1} imply \eqref{eq:estimate_C1C2} with
\begin{equation}
\alpha=\alpha_{1}(4+\alpha_{2}+(2\epsilon_{1}\epsilon_{2}/(5\mu))), 
\end{equation}
whenever $n\geq\nu_{0}$.
\end{proof}

\begin{remark}
\label{rem:Summability} It follows from \eqref{eq:b^nu} and
\eqref{eq:h_estimate} that $\sum_{n=0}^{\infty}\Vert b_{n}\Vert<\infty$.
Indeed, from \eqref{eq:h_estimate} we have $\sum_{n=1}^{\infty}h_{n}^{2}%
\leq(2/(\epsilon_{1}\epsilon_{2}))\|x_{0}-x\|^{2}<\infty$ and from
\eqref{eq:b^nu} we have $\sum_{n=0}^{\infty}\Vert b_{n}\Vert\leq\beta
\sum_{n=1}^{\infty}h_{n}^{2}$ for some $\beta>0$.
\end{remark}

\begin{lemma}
\label{lem:n_1} Under the assumptions of Lemma \ref{lem:fejerM}, let some
$\tau>0$ be given. There exists an integer $\nu_{1}=\nu_1(\tau)\in\mathbb{N},\,\nu
_{1}\geq\nu_{0},$ where $\nu_{0}$ is from Lemma \ref{lem:n_0}, such that
$g_{i(n)}(x_{n})<\tau/2$ whenever $n\geq\nu_{1}$.
\end{lemma}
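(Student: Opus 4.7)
The plan is to show that $g_{i(n)}(x_n) \to 0$ as $n \to \infty$, from which the existence of $\nu_1$ follows immediately by choosing it large enough (and at least $\nu_0$) so that the bound $\tau/2$ is achieved.

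First, I would observe that by Remark \ref{rem:Summability} (or directly by summing the telescoping inequality \eqref{eq:h_estimate} of Lemma \ref{lem:n_0}), the series $\sum_{n=0}^{\infty} h_n^2$ converges. In particular $h_n \to 0$ as $n \to \infty$. Next, I would split the analysis into the two cases dictated by the definition \eqref{eq:h_n} of $h_n$: if the step $n$ is inactive, then $g_{i(n)}(x_n) \leq 0 < \tau/2$ and there is nothing to prove for that $n$; if the step is active, then $g_{i(n)}(x_n) > 0$ and by definition $g_{i(n)}(x_n) = h_n \|t_n\|$.

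For the active case, the key ingredient is Condition \ref{cond:boundedness}, which supplies a uniform constant $K>0$ with $\|t_n\| \leq K$ for all $n$. Combined with $g_{i(n)}(x_n) = h_n\|t_n\|$, this gives $g_{i(n)}(x_n) \leq Kh_n$ whenever the step is active. Since $h_n \to 0$, one can choose $\nu_1 \geq \nu_0$ large enough so that $Kh_n < \tau/2$ for every $n \geq \nu_1$, and this $\nu_1$ works uniformly over both cases.

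There is no real obstacle here; the lemma is a clean consequence of the summability of $\{h_n^2\}$ (which is the substantive content coming from Lemma \ref{lem:n_0} and the control \eqref{eq:b^nu} on the perturbations) together with the assumed uniform bound on the selected $0$-subgradients $t_n$. The only thing to be careful about is to handle the inactive steps separately, since there $t_n$ need not be defined in the same way, but on those steps the inequality $g_{i(n)}(x_n) < \tau/2$ is trivial.
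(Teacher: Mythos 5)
Your proposal is correct and follows essentially the same route as the paper: both derive $h_n\to 0$ from the telescoping inequality of Lemma \ref{lem:n_0}, split into active and inactive steps, and use Condition \ref{cond:boundedness} to get $g_{i(n)}(x_n)\leq Kh_n$ in the active case. No gaps.
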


\begin{proof}
By \eqref{eq:h_estimate} and the fact that the sequence $\{\Vert x_{n}-x\Vert
\}_{n=0}^{\infty}$ is a Cauchy sequence it follows that there exists an
integer $\nu_{1}\geq\nu_{0}$ such $Kh_{n}<\tau/2$ for any $n\geq\nu_{1}$,
where $K$ is from Condition \ref{cond:boundedness}. Let $n\geq\nu_{1}$ be
given. If an inactive step occurs at step $n+1$, then 
$g_{i(n)}(x_{n})\leq0<\tau/2$. Otherwise, an active step occurs at step $n+1$. By Condition
\ref{cond:boundedness} it follows that $\Vert t_{n}\Vert\leq K$. The definition of
$h_{n}$ then implies that $g_{i(n)}(x_{n})/K\leq h_{n}$. As a result,
$g_{i(n)}(x_{n})\leq Kh_{n}<\tau/2$.
\end{proof}

\begin{lemma}
\label{lem:n_2} Under the assumptions of Lemma \ref{lem:fejerM}, let $j\in J$
and $\tau>0$ be given. Let $\nu_{1}=\nu_1(\tau)$ be taken from Lemma \ref{lem:n_1}. Then
there exists an integer $\nu_{2,j}=\nu_{2,j}(\tau)\in\mathbb{N}$, such that $\nu_{2,j}\geq
\nu_{1}$ and $|g_{j}(x_{n+s})-g_{j}(x_{n})|<\tau/2$ for all $n\geq\nu_{2,j}$
and all $s\in\{1,2,\ldots,L_{j}\}$, where $L_{j}$ is from \eqref{eq:control}.
\end{lemma}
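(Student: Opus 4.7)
The plan is to combine the uniform continuity assumption on $g_j$ from Condition \ref{cond:LowerSemiCont} with the fact (to be extracted from Lemma \ref{lem:n_0}) that consecutive iterates become arbitrarily close. Concretely, first I would observe that by Lemma \ref{lem:fejerM} the sequence $\{x_n\}_{n=0}^\infty$ is contained in the set $U := B[x,2\mu]\cap\Omega$, which is a closed and bounded subset of $\Omega$. Condition \ref{cond:LowerSemiCont} then supplies a modulus of uniform continuity: for the given $\tau>0$, there exists $\delta>0$ such that whenever $u,v\in U$ satisfy $\|u-v\|<\delta$, one has $|g_j(u)-g_j(v)|<\tau/2$.

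Next I would deduce from Lemma \ref{lem:n_0} that $\|x_{n+1}-x_n\|\to 0$. Indeed, Lemma \ref{lem:fejerM} shows that $\{\|x_n-x\|^2\}_{n=0}^\infty$ is nonincreasing and bounded below, so it converges; hence the telescoping differences $\|x_n-x\|^2-\|x_{n+1}-x\|^2$ tend to $0$, and \eqref{eq:estimate_C1C2} forces $\|x_{n+1}-x_n\|\to 0$.

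Given this, I would choose $\nu_{2,j}\geq\nu_1$ large enough so that $\|x_{n+1}-x_n\|<\delta/L_j$ for all $n\geq\nu_{2,j}$. A straightforward triangle inequality then gives, for every $n\geq\nu_{2,j}$ and every $s\in\{1,2,\ldots,L_j\}$,
\begin{equation}
\|x_{n+s}-x_n\|\leq\sum_{k=0}^{s-1}\|x_{n+k+1}-x_{n+k}\|<L_j\cdot\frac{\delta}{L_j}=\delta.
\end{equation}
Since both $x_n$ and $x_{n+s}$ lie in $U$, the uniform continuity of $g_j$ on $U$ yields $|g_j(x_{n+s})-g_j(x_n)|<\tau/2$, as required.

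I do not expect a serious obstacle here. The only mildly delicate point is ensuring that the bound $\delta/L_j$ propagates through at most $L_j$ consecutive triangle-inequality terms uniformly in $s$; but since $L_j$ is a fixed integer depending only on $j$ (via the generalized almost cyclic control \eqref{eq:control}), this is immediate. Everything else is a routine combination of the Fej\'er-type estimate of Lemma \ref{lem:fejerM}, the tail estimate of Lemma \ref{lem:n_0}, and uniform continuity on a bounded set.
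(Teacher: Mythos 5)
Your proposal is correct and follows essentially the same route as the paper: uniform continuity of $g_j$ on the bounded set containing the iterates supplies a modulus $\delta$, the estimate \eqref{eq:estimate_C1C2} together with the convergence of the Fej\'er-monotone quantities $\Vert x_n-x\Vert$ forces $\Vert x_{n+1}-x_n\Vert\to 0$, and choosing $\nu_{2,j}\geq\nu_1$ so that $\Vert x_{n+1}-x_n\Vert<\delta/L_j$ lets the triangle inequality close the argument. No gaps.
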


\begin{proof}
By Lemma \ref{lem:fejerM} the sequence $\{x_{n}\}_{n=0}^{\infty}$ is contained
in the closed ball $B[x,q]$ of radius $q$ and center $x$. Since $g_{j}$ is
uniformly continuous on $B[x,q]\bigcap\Omega$ there exists a positive number $\delta_j$ such
that for all $u,v\in B[x,q]\bigcap\Omega$, if $\Vert u-v\Vert<\delta_j$ then 
$|g_{j}(u)-g_{j}(v)|<\tau/2$.

By \eqref{eq:estimate_C1C2} and the fact that the sequence $\{\Vert x_{n}-x\Vert
\}_{n=0}^{\infty}$ is (bounded below and decreasing and hence) a Cauchy sequence, it follows that there exists
$\nu_{2,j}\in\mathbb{N},\nu_{2,j}\geq\nu_{1}$ such that
\begin{equation}\label{eq:delta_jL_j}
\Vert x_{n+1}-x_{n}\Vert<\delta_j/L_{j}\quad\text{ for all }n\geq\nu_{2,j}.
\end{equation}
From \eqref{eq:delta_jL_j} and the triangle inequality it follows that 
$\Vert x_{n+s}-x_{n}\Vert<\delta_j$ for all $n\geq\nu_{2,j}$ and all integers $s\in\{1,2,\ldots,L_{j}\}$. Since $x_n,x_{n+s}\in B[x,q]\bigcap\Omega$, we conclude 
that 
\begin{equation}
|g_{j}(x_{n+s})-g_{j}(x_{n})|<\tau/2\text{ whenever }n\geq\nu_{2,j}.
\end{equation}

\end{proof}

\begin{lemma}
\label{lem:cluster} Under the assumptions of Lemma \ref{lem:fejerM}, any weak
cluster point of a sequence $\{x_{n}\}_{n=0}^{\infty},$ generated by Algorithm 
\ref{alg:perturbed-csp}, belongs to $C$.
\end{lemma}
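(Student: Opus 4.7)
The plan is to combine the almost-cyclic control property with the asymptotic estimates from the previous lemmas and the weak sequential lower semicontinuity hypothesis on each $g_j$.

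First I would extract a weakly convergent subsequence $\{x_{n_k}\}_{k=0}^\infty$ with $x_{n_k}\rightharpoonup x^*$. Because $\Omega$ is closed and convex, it is weakly closed, and since $x_{n_k}\in\Omega$ for all $k$ (Lemma \ref{lem:fejerM}), we get $x^*\in\Omega$. It remains to show that $g_j(x^*)\leq 0$ for every $j\in J$.

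Fix $j\in J$ and $\tau>0$, and let $L_j$ be the constant from the control condition \eqref{eq:control}. Take the integer $\nu_{2,j}=\nu_{2,j}(\tau)\geq\nu_1(\tau)$ provided by Lemma \ref{lem:n_2}, and restrict attention to indices $n_k\geq\nu_{2,j}$. By the control property, there exists $s_k\in\{0,1,\ldots,L_j-1\}$ such that $i(n_k+s_k)=j$. Set $m_k:=n_k+s_k\geq\nu_1$. Then $g_j(x_{m_k})=g_{i(m_k)}(x_{m_k})<\tau/2$ by Lemma \ref{lem:n_1}. If $s_k=0$ this already gives $g_j(x_{n_k})<\tau/2$; if $s_k\geq 1$, Lemma \ref{lem:n_2} yields $|g_j(x_{m_k})-g_j(x_{n_k})|<\tau/2$, so that in either case
\begin{equation}
g_j(x_{n_k})<\tau\quad\text{for all sufficiently large }k.
\end{equation}

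Now I invoke the weak sequential lower semicontinuity of $g_j$ from Condition \ref{cond:LowerSemiCont}: from $x_{n_k}\rightharpoonup x^*$ it follows that
\begin{equation}
g_j(x^*)\leq\liminf_{k\to\infty}g_j(x_{n_k})\leq\tau.
\end{equation}
Since $\tau>0$ was arbitrary, $g_j(x^*)\leq 0$, hence $x^*\in C_j$. As this holds for every $j\in J$, we conclude $x^*\in C$. The main obstacle is a bookkeeping one, namely bridging the indices at which $g_{i(n)}(x_n)$ is known to be small (those with $i(n)=j$, guaranteed only within windows of length $L_j$) with the indices $n_k$ along the weakly convergent subsequence; this is exactly what Lemma \ref{lem:n_2} was tailored to bridge, using the fact that $\|x_{n+1}-x_n\|\to 0$ (a consequence of Lemma \ref{lem:n_0} and Fejér monotonicity of $\{\|x_n-x\|\}$) together with the uniform continuity of $g_j$ on the bounded orbit.
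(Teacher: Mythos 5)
Your proof is correct and follows essentially the same route as the paper: locate an index $m_k=n_k+s_k$ with $i(m_k)=j$ inside the control window, use Lemma \ref{lem:n_1} to bound $g_j(x_{m_k})$, transfer the bound back to $g_j(x_{n_k})$ via Lemma \ref{lem:n_2}, and finish with weak sequential lower semicontinuity. Your direct appeal to Lemma \ref{lem:n_1} (rather than the paper's explicit active/inactive case split) and your separate treatment of the $s_k=0$ case are fine and, if anything, slightly cleaner.
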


\begin{proof}
Suppose that $y\in H$ is a weak cluster point of $\{x_{n}\}_{n=0}^{\infty}$,
i.e., a subsequence $\{x_{n_{k}}\}_{k=0}^{\infty}$ of $\{x_{n}\}_{n=0}^{\infty}$ 
converges weakly to $y$.

Let $j\in J$ and $\tau>0$ be given. Let $k$ be large enough so that 
$n_{k}>\nu_{2,j}$ where $\nu_{2,j}$ is from Lemma \ref{lem:n_2}. Since the control
sequence satisfies \eqref{eq:control} there exists an integer $s\in\lbrack
n_{k},L_{j}-1+n_{k}]$ such that $i(s)=j$. From Lemma \ref{lem:n_2} we know
that 
\begin{equation}
g_{j}(x_{n_{k}})-g_{j}(x_{s})<\tau/2.
\end{equation}
Consequently, if $g_{j}(x_{s})\leq0$, then 
\begin{equation}
g_{j}(x_{n_{k}})=g_{j}(x_{n_{k}})-g_{j}(x_{s})+g_{j}(x_{s})<\tau/2+0.
\label{eq:g_jr2}%
\end{equation}
If $g_{j}(x_{s})>0$, then an active step occurs at step $s+1$. Since
$s\geq n_{k}>\nu_{2,j}\geq\nu_{1}$ and $j=i(s)$, it follows from the
definitions of $\nu_{1},\nu_{2,j}$ and from Lemma \ref{lem:n_1} that 
$g_{j}(x_{s})<\tau/2$. Hence, as in \eqref{eq:g_jr2}, we have
\begin{equation}
g_{j}(x_{n_{k}})<\tau/2+\tau/2=\tau.
\end{equation}
Therefore, from the weakly sequential lower semicontinuity of $g_{j}$ we
conclude that the inequality $g_{j}(y)\leq\liminf_{k\rightarrow\infty}g_{j}(x_{n_{k}} 
)\leq\tau$ holds for each $\tau>0$. As a result, $g_{j}(y)\leq0$ for each $j\in J$
and, thus, $y\in C$.
\end{proof}

In order to prove Theorem \ref{thm:resiliency} we need one of the following
two general lemmas.  
\begin{lemma}
\label{lem:WeakLimit} Suppose that $\{x_{n}\}_{n=0}^{\infty}$ is a bounded
sequence and that the limit  $\lim_{n\rightarrow\infty}\Vert x_{n}-z\Vert$ 
exists for each weak limit point $z$ of the sequence. Then the 
whole sequence converges weakly.
\end{lemma}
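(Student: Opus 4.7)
The plan is to proceed by contradiction, assuming the sequence has two distinct weak cluster points and deriving a contradiction from the standing assumption that $\lim_{n\to\infty}\|x_n - z\|$ exists for every such $z$.

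First, I would invoke boundedness together with the fact that the closed ball in a real Hilbert space is weakly sequentially compact. Consequently, the set $W$ of weak cluster points of $\{x_n\}_{n=0}^\infty$ is nonempty. If I can prove that $W$ is a singleton $\{z^\ast\}$, then the whole sequence converges weakly to $z^\ast$: otherwise some subsequence would stay weakly away from $z^\ast$, yet by boundedness it would admit a further weakly convergent subsequence whose limit lies in $W\setminus\{z^\ast\}$, contradicting $W=\{z^\ast\}$.

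So the heart of the argument is uniqueness of the weak cluster point. Suppose by way of contradiction that $z_1, z_2\in W$ with $z_1\neq z_2$, and let $\{x_{n_k}\}$, $\{x_{m_k}\}$ be subsequences converging weakly to $z_1$ and $z_2$, respectively. The key identity is
\begin{equation}
\|x_n-z_1\|^2-\|x_n-z_2\|^2 = 2\langle x_n,\, z_2-z_1\rangle + \|z_1\|^2-\|z_2\|^2.
\end{equation}
By hypothesis both $\|x_n-z_1\|$ and $\|x_n-z_2\|$ have limits as $n\to\infty$, so the left-hand side converges; hence the scalar sequence $\langle x_n,\, z_2-z_1\rangle$ converges to some real number $L$. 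Passing to the two subsequences and using weak convergence yields
\begin{equation}
L=\langle z_1,\,z_2-z_1\rangle=\langle z_2,\,z_2-z_1\rangle,
\end{equation}
which rearranges to $\|z_2-z_1\|^2=0$, contradicting $z_1\neq z_2$.

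The only subtle step is the final deduction from \emph{uniqueness of a weak cluster point} to \emph{weak convergence of the whole sequence}; this is standard but worth stating carefully, and it does rely on boundedness (to supply the subsequence of a subsequence in the argument above). No additional assumptions beyond those stated in the lemma are needed, and no property specific to our algorithm enters the proof — this is a purely functional-analytic fact about bounded sequences in Hilbert space.
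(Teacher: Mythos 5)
Your proof is correct. The paper itself does not prove Lemma \ref{lem:WeakLimit}; it obtains it as a special case of \cite[Lemma 3.4]{ReemBregmanII2012} (a result set in the more general framework of weak-strong spaces and Bregman distances) and notes it can also be extracted from \cite[Theorem 4.2]{GoebelReich} or \cite[Proposition 1(iii)]{AlberIusemSolodov1998}. What you supply instead is the classical direct argument that underlies all of these references: weak sequential compactness of bounded sets gives a nonempty set of weak cluster points; the polarization identity
$\Vert x_n-z_1\Vert^{2}-\Vert x_n-z_2\Vert^{2}=2\langle x_n,z_2-z_1\rangle+\Vert z_1\Vert^{2}-\Vert z_2\Vert^{2}$
together with the existence of both limits forces $\langle x_n,z_2-z_1\rangle$ to converge, and evaluating the limit along the two subsequences yields $\Vert z_2-z_1\Vert^{2}=0$; uniqueness of the weak cluster point plus boundedness then upgrades to weak convergence of the whole sequence. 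All three steps are sound, and you are right to flag the last one as the place where boundedness is genuinely used (a subsequence violating weak convergence must itself have a weakly convergent sub-subsequence, whose limit lies in the weakly closed complement of the offending weak neighborhood and is therefore a second cluster point). The trade-off is the usual one: your route is self-contained, elementary, and specific to Hilbert space, whereas the paper's citation buys generality (the same statement in settings where the norm is replaced by a Bregman-type distance), which the authors exploit in their closing remarks about extensions to Banach spaces and Bregman distances.
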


Lemma \ref{lem:WeakLimit}  is a particular case of \cite[Lemma 3.4]%
{ReemBregmanII2012} (take there $X$ to be a Hilbert space, $\mathbb{T}$ to be
the weak topology, $D(x,y)=\Vert x-y\Vert$ or $D(x,y)=\Vert x-y\Vert^{2}$, and
also use \cite[Example 2.5]{ReemBregmanII2012} or \cite[Example 2.6]%
{ReemBregmanII2012}). Lemma \ref{lem:WeakLimit} can also be deduced, after
some manipulations, from \cite[Theorem 4.2]{GoebelReich} or from the proof of
\cite[Proposition 1(iii)]{AlberIusemSolodov1998}.

\begin{lemma}
\label{lem:BrowderOpialBB} Let $F$ be a closed and convex subset of a Hilbert
space $H$, and suppose that $\{x_{n}\}_{n=0}^{\infty}$ is a bounded sequence 
in $H$ such that
 
\begin{enumerate}
[(a)]

\item \label{item:Fejer} $\{x_{n}\}_{n=0}^{\infty}$ is Fej\'er monotone with respect to $F$, that is, the sequence $\{\Vert x_{n}-x\Vert\}_{n=0}^{\infty}$ 
is decreasing for each $x\in F$.

\item Each weak cluster point of the sequence $\{x_{n}\}_{n=0}^{\infty}$ lies 
in $F$.
\end{enumerate}

Then $\{x_{n}\}_{n=0}^{\infty}$ converges weakly to a point in $F$. Alternatively, 
if \eqref{item:Fejer} holds and the interior of $F$ is nonempty, 
then the sequence converges strongly to a point in $H$.
\end{lemma}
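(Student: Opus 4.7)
The plan is to treat the two conclusions separately, using Lemma \ref{lem:WeakLimit} for the weak case and a classical interior-based Fej\'er argument for the strong case.

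For the weak convergence claim, I would first observe that hypothesis \eqref{item:Fejer} immediately furnishes, for each fixed $x\in F$, a monotone nonincreasing nonnegative sequence $\{\|x_n-x\|\}_{n=0}^{\infty}$, and hence the limit $\lim_{n\to\infty}\|x_n-x\|$ exists. Next, I would combine this with hypothesis (b): if $z$ is any weak cluster point of $\{x_n\}_{n=0}^{\infty}$, then $z\in F$, so $\lim_{n\to\infty}\|x_n-z\|$ exists. Together with the assumed boundedness of $\{x_n\}_{n=0}^{\infty}$, these are exactly the hypotheses of Lemma \ref{lem:WeakLimit}, which produces a weak limit $x^{\ast}$ of the whole sequence. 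Since $x^{\ast}$ is itself a weak cluster point, (b) places it in $F$, finishing the first part.

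For the strong convergence claim, I would exploit the interior assumption to convert Fej\'er monotonicity into a norm Cauchy estimate. Fix $y\in\textnormal{int}(F)$ and $\rho>0$ with the closed ball $B[y,\rho]\subseteq F$. For indices $m<n$ with $x_n\neq x_m$, set
\begin{equation}
v:=\frac{x_n-x_m}{\|x_n-x_m\|},\qquad z:=y+\rho v,
\end{equation}
so that $z\in F$. Expanding $\|x_k-z\|^2=\|x_k-y\|^2-2\rho\langle x_k-y,v\rangle+\rho^2$ for $k\in\{m,n\}$ and using the Fej\'er inequality $\|x_n-z\|^2\leq\|x_m-z\|^2$ yields, after cancellation of the $\rho^2$ terms,
\begin{equation}
2\rho\|x_n-x_m\|=2\rho\langle x_n-x_m,v\rangle\leq\|x_m-y\|^2-\|x_n-y\|^2.
\end{equation}
Since $y\in F$, the monotone nonnegative sequence $\{\|x_n-y\|\}_{n=0}^{\infty}$ is Cauchy in $\mathbb{R}$, so the right-hand side tends to $0$ as $m,n\to\infty$. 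Therefore $\{x_n\}_{n=0}^{\infty}$ is Cauchy in $H$, and completeness delivers a strong limit in $H$ (the trivial case $x_n=x_m$ requires no argument).

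The main obstacle is the choice of the auxiliary direction $v$ and the test point $z=y+\rho v$ in the strong-convergence step: this is the single nontrivial idea that converts the interior-point information into a contraction-style bound on $\|x_n-x_m\|$. Everything else amounts to applying the already proved Lemma \ref{lem:WeakLimit} and to routine Hilbert-space expansions together with the elementary fact that bounded monotone real sequences are Cauchy.
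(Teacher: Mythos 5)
Your proof is correct. Note, however, that the paper does not actually prove Lemma \ref{lem:BrowderOpialBB}: it imports the weak-convergence part from Browder/Opial (or from \cite[Theorem 2.16(ii)]{bb96}) and the strong-convergence part from \cite[Theorem 2.16(iii)]{bb96}, presenting Lemma \ref{lem:WeakLimit} and Lemma \ref{lem:BrowderOpialBB} merely as two interchangeable tools for the proof of Theorem \ref{thm:resiliency}. What you have done differently is to make the dependence explicit: your weak-convergence argument shows that Lemma \ref{lem:BrowderOpialBB}(a)--(b) is in fact a corollary of Lemma \ref{lem:WeakLimit}, since Fej\'er monotonicity gives the existence of $\lim_{n\to\infty}\Vert x_n-z\Vert$ for every $x\in F$, and hypothesis (b) ensures every weak cluster point is such an $x$; identifying the weak limit as a member of $F$ then again uses (b). This buys a self-contained derivation and clarifies that the two lemmas are not independent alternatives for the weak case. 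Your strong-convergence argument is the classical interior-point Fej\'er computation (essentially the one behind \cite[Theorem 2.16(iii)]{bb96}): the choice $z=y+\rho v$ with $v=(x_n-x_m)/\Vert x_n-x_m\Vert$ is exactly the right test point, the expansion and cancellation are correct, and the conclusion that $2\rho\Vert x_n-x_m\Vert\leq\Vert x_m-y\Vert^{2}-\Vert x_n-y\Vert^{2}$ makes $\{x_n\}_{n=0}^{\infty}$ a Cauchy sequence is valid because the monotone bounded sequence $\{\Vert x_n-y\Vert^{2}\}_{n=0}^{\infty}$ converges. Correctly, you only use hypothesis (a) and the nonempty interior in this part, matching the statement's claim of a strong limit in $H$ rather than in $F$.
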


The weak convergence part of 
Lemma \ref{lem:BrowderOpialBB} is from either \cite[Lemma 6]{Browder1967} (but, 
as noted in \cite{Browder1967}, this lemma was essentially proved by Opial 
in \cite[Lemma 1]{Opial1967}) or \cite[Theorem 2.16(ii)]{bb96}. The strong convergence part 
is from \cite[Theorem 2.16(iii)]{bb96}. 

It is interesting to note that both lemmas hold in a more general context:
Lemma \ref{lem:WeakLimit} holds in the general setting of weak-strong spaces
and corresponding Bregman distances without Bregman functions, while Lemma 
\ref{lem:BrowderOpialBB} can be generalized to uniformly convex Banach spaces
having a weakly continuous duality mapping \cite[Lemma 11]{Browder1967} (see
also \cite[Lemma 3]{Opial1967}). In addition, as observed in \cite[Theorem
5.5, Proposition 5.10]{BauschkeCombettes2011}, the subset $F$ does not have to
be closed and convex but rather it can be arbitrary nonempty (or,
respectively, with a nonempty interior) when the space is Hilbert. In fact, as
observed \cite[Proposition 3.10]{Combettes2001}, in this case $\{x_{n}\}_{n=0}^{\infty}$
 may be just quasi-Fej\'er.

Now we are ready to prove Theorem \ref{thm:resiliency}.

\begin{proof}
[\textbf{proof of Theorem \ref{thm:resiliency}}] Let $x\in C$ be such that
$d(x,x_{0})<\mu$. There exists such an $x$ since $d(x_{0},C)<\mu$. From Lemma
 \ref{lem:fejerM} (with this $x$) it follows that $\{x_{n}\}_{n=0}^{\infty}$ is 
contained in the ball $B[x,\mu]$. Hence it has at least one weak cluster point.
Any weak cluster point $y$ of the sequence belongs to this ball since by the
lower semicontinuity of the norm we have $\Vert y-x\Vert\leq\liminf_{n\rightarrow
\infty}\Vert x_{n}-x\Vert\leq \mu$. From Lemma \ref{lem:cluster} we know that
$y\in C$. In addition, since
\begin{equation}
\Vert x_{0}-y\Vert\leq\Vert x_{0}-x\Vert+\Vert x-y\Vert\leq2\mu
\end{equation}
we can apply Lemma  \ref{lem:fejerM} with $y$ instead of $x$ to conclude that
the sequence $\{\Vert x_{n}-y\Vert\}_{n=0}^{\infty}$ of 
nonnegative numbers is decreasing and hence converges to a nonnegative number. The previous consideration holds
for any weak limit point. As a result, Lemma \ref{lem:WeakLimit} ensures that
$\{x_{n}\}_{n=0}^{\infty}$ converges weakly to some point, which, by Lemma
 \ref{lem:cluster}, belongs to $C$, and, by the above, to $F:=B[x_{0},2\mu]\cap
C$.

Alternatively, the above already proves that any weak cluster point $y$ of the
sequence belongs to the closed and convex subset $F$ and also that the
nonnegative sequence $\{\Vert x_{n}-y\Vert\}_{n=0}^{\infty}$ is decreasing.
Hence, from Lemma \ref{lem:BrowderOpialBB} it follows that $\{x_{n}\}_{n=0}^{\infty}$ 
converges weakly to some point in $F$. By the same lemma,
the convergence is strong if $F$ has a nonempty interior. The corresponding
limit point is in $F$ since it coincides with the unique weak limit point
which is there. The strong convergence holds also if the space is 
finite-dimensional since in this case the weak and strong topologies coincide.
\end{proof}

\section{Computational results}\label{sec:ComputationalResults}
In this section we present a concrete example of the CFP with 
zero-convex functions, together with relevant computational results. 
The example is related to Examples \ref{ex:Voronoi} and \ref{ex:VoronoiWeighted} above. 
The context is molecular biology. 

\subsection{The setting} 
The setting of the example is as follows. There is 
 a material located in a 3D box $\Omega$ and composed of two 
 types of molecules. Each molecule type is modeled by a ball. One 
 type has radius $r$ and the other has radius $R>r$, measured in 
 angstroms (\AA). This scenario  
 is common in molecular biology \cite{GTL1995,GPF1997} where the 
 first molecule typw is water ($r=1.4$\AA)  and the second type  is 
a material which comes in contact with water such as some 
compounds of a protein (on the protein surface). An example 
of such a material is alpha carbon (CA) whose radius is $R=1.87$\AA. 
As explained in Example \ref{ex:VoronoiWeighted} above and the references 
therein, the additively weighted Voronoi cell  of a given molecule  plays 
an  important role in this context. 

Consider now a water molecule whose center is $p$. Denote by 
$V_p$ its  additively weighted Voronoi cell. 
We look for a point in $V_p$ which is not too far from $p$ 
and not too far from a certain neighboring alpha carbon molecule. 
In other words, we want to find a point in the intersection of $V_p$ and 
two balls. Such a point may help in trimming parts of the interaction 
interface using a spherical probe  \cite{KWCKLBK2006}. 

In what follows we formulate the problem as a 
convex feasibility problem.  
Let the locations of all molecules different from $p$ be denoted by the 
3-dimensional vectors $a_0,a_1,\ldots,a_{\ell}$. 
Let $I_{\textnormal{water}}=
\{0,1,\ldots,j_{\textnormal{water}}\}$ be the set of indices of water molecules and 
$I_{\alpha}=\{j_{\textnormal{water}}+1,j_{\textnormal{water}}+2,\ldots,\ell\}$ be the set of indices of the alpha carbon molecules. 
For each $j\in I:=\{0,1,\ldots,\ell\}=I_{water}\bigcup I_{\alpha}$ let $w_j=r$ when molecule $j$ is a water molecule 
and $w_j=R$ when this molecule is alpha carbon. 
From Example \ref{ex:VoronoiWeighted} we know that 
\begin{multline}
V_p=\{x\in \Omega\, |\, d(x,p)-r\leq d(x,a_j)-w_j,\,j\in I\}
=\bigcap_{j\in I}\{x\in \Omega\, |\, d(x,p)-r\leq d(x,a_j)-w_j\}\\
=\left(\bigcap_{j\in I_{\textnormal{water}}}\{x\in \Omega\, |\, d(x,p)\leq d(x,a_j)\}\right)
\bigcap\left(\bigcap_{j\in I_{\alpha}}\{x\in \Omega\, |\, d(x,p)-d(x,a_j)+R-r\leq 0\}\right).
\end{multline}
Given $j\in I_{\textnormal{water}}$ let $C_j=\{x\in \Omega\,|\, d(x,p)\leq d(x,a_j)\}$. 
This set is the intersection of a half-space and $\Omega$ and it can be written as  
$C_j=g_j^{\leq 0}$ 
where $g_j:\Omega\to\R$ is the function defined by $g_j(x)=\langle x-0.5(a_j+p),(a_j-p)/\|a_j-p\|\rangle$. 
Given $j\in I_{\alpha}$, let $g_j:\Omega\to\R$ be defined 
by $g_j(x)=d(x,p)-d(x,B_j)$, where $B_j$ is the closed ball of radius $R-r$ 
around $a_j$, and let $C_j=\{x\in \Omega\,|\, d(x,p)-d(x,a_j)+R-r\leq 0\}$.  
Example \ref{ex:VoronoiWeighted} above shows that $C_j=g_j^{\leq 0}$. Finally, 
define two additional functions $g_{\ell+1},g_{\ell+2}:\Omega\to\R$ by $g_{\ell+1}(x)=d(x,p)-\rho$ and 
$g_{\ell+2}(x)=d(x,a_{\ell})-\rho$, where $\rho$ is the radius of the probe 
and $a_{\ell}$ is the location of the alpha carbon molecule mentioned earlier and related to the probe. 
Let $C_j=g_j^{\leq 0}$, $j=\ell+1,\ell+2$, and let  $J=\{0,1,2,\ldots,\ell+2\}$. 
Our goal is to find a point in the set 
\begin{equation}
C:=V_p\bigcap C_{\ell+1} \bigcap C_{\ell+2}=\bigcap_{j \in J}C_j.
\end{equation}
Example \ref{ex:Voronoi} above ensures 
that $g_j$ is zero-convex (and continuous) for each $j\in J$. Hence  
 $C_j$ is closed and convex 
for all $j\in J$. For the selection of the 0-subgradients   
it suffices to consider $y\notin g_j^{\leq 0}$ and to divide the discussion 
into several cases.  If $j=\ell+1$, 
then $g_j$ (and its extension to $\R^3$ defined by the same formula) is convex 
and since it is smooth at $y$ we can take $t=\nabla g_j(y)=(y-p)/\|y-p\|$. 
The norm of $t$ is bounded by 1. 
In the same way we can take $t=(y-a_{\ell})/\|y-a_{\ell}\|$ when $j=\ell+2$. 
If $j\in I$, then  we can use \eqref{eq:t_voronoi} with  
$a=a_j+(w_j-r)(y-a_j)/\|y-a_j\|$ if $y\notin B_j$ and $a=y$ otherwise, because 
this $a$ satisfies $d(y,B_j)=d(y,a)<d(y,p)$ (we denote 
$B_j:=\{a_j\}$ when $j\in I_{\textnormal{water}}$). 
According to Example \ref{ex:Voronoi}, the norms of the resulting 0-subgrdients 
are bounded by 2. 

%
%
Regarding  the locations 
of the molecules, we assume that they roughly form a two-sided arrangement, where the 
CA molecules are in one side of the cube $\Omega$, and the water molecules are in 
another side of $\Omega$. The molecule located at $p$ is in the middle of the cube, 
namely, $p=(0,0,0)$.
It may happen that this configuration 
of locations is not likely 
to be realized (or will not be stable), since these data are not taken 
from measurements or from related computer experiments. However, different locations of the molecules  
will merely result in different values of some parameters but will usually not 
affect the essential properties of the setting (zero-convexity of the functions, etc.). 
The main goal of this 
example is to illustrate the methods 
and concepts discussed in this paper. To see that the algorithm really works 
also in other configurations, we made simulations in the case of 
random configurations of molecules in 3D and higher dimensions. See Table \ref{table:HigherDimensions} below. 

\subsection{Concrete values in the simulations} 
In the concrete simulations the box was $\Omega=[-4,4]^3$ (in the higher dimensional version 
of the problem we took $\Omega=[-4,4]^{\dim}$). There were 16 water molecules 
located at 
$a_0=(3.5, -3.5, -3.5)$, $a_1=(3.5, 0.0, -3.5)$, $a_2=(3.5, 3.5, -3.5)$, 
$a_3=(3.5, -3.5, 0.0)$, $a_4=(3.5, 0.0, 0.0)$, $a_5=(3.5, 3.5, 0.0)$, 
$a_6=(3.5, -3.5, 3.5)$, $a_7=(3.5, 0.0, 3.5)$, $a_8=(3.5, 3.5, 3.5)$, 
$a_9=(0.0, -3.5, -3.5)$, $a_{10}=(0.0, 0.0, -3.5)$, $a_{11}=(0.0, 3.5, -3.5)$,
$a_{12}=(0.0, -3.5, 0.0)$, $a_{13}=(0.0, 3.5, 0.0)$, $a_{14}=(0.0, -3.5, 3.5)$,
$a_{15}=(0.0, 3.5, 3.5)$, and 10 CA molecules located at  
$a_{16}=(-3.5, -3.5, -3.5)$,
$a_{17}=(-3.5, 0.0, -3.5)$,
$a_{18}=(-3.5, 3.5, -3.5)$,
$a_{19}=(-3.5, -3.5, 0.0)$,
$a_{20}=(-3.5, 0.0, 0.0)$,
$a_{21}=(-3.5, 3.5, 0.0)$,
$a_{22}=(-3.5, -3.5, 3.5)$,
$a_{23}=(-3.5, 0.0, 3.5)$,
$a_{24}=(-3.5, 3.5, 3.5)$,
$a_{25}=(0.0, 0.0, 3.5)$.  The maximum index was therefore $\ell=25$ and the 
total number of functions was $28=\ell+3=:\ell_3$.  

For the stopping condition, 
we defined a variable called ``smallNumber'' and checked every period  
that $g_j(x_n)\leq smallNumber$ 
for all $j\in J$, namely that $x_n$ is in the $smallNumber$-level set of $g_j$ 
for all $j\in J$. We took $smallNumber=0.00001$. 
When the control was cyclic, the period mentioned above was the length of a cycle, namely 
$\ell_3$ (the total number of functions). 
When the control was almost cyclic, the period was $3\ell_3$ as explained below. 
If the number of iterations exceeded a certain large number chosen by the user 
($5\cdot 10^6$ in our case) without finding a feasible point, then the process 
stopped with an output saying this.  
 
The almost cyclic control was constructed in the following way. First, we constructed  
an array called \emph{almost$\_$cycle} of length $2\ell_3$ (starting from 0) whose first 
$\ell_3$ entires were selected randomly 
from $\{0,1\}$. For $k\in \{\ell_3,\ell_3+1,\ldots,2\ell_3-1\}$, entry number $k$ was $1-$\emph{almost$\_$cycle}$[k-\ell_3]$. 
We constructed the control $i(n)$ as follows: when both   
\emph{almost$\_$cycle}$[n\mod (2\ell_3)]=1$ and $(n\mod 2\ell_3) \in \{0,1,\ldots,\ell_3-1\}$ held 
true, then $i(n)$ was $n\mod (2\ell_3)$. When both \emph{almost$\_$cycle}$[n\mod (2\ell_3)]=1$ and  
 $(n\mod 2\ell_3)\in \{\ell_3,\ell_3+1,\ldots,2\ell_3-1\}$ held true, we had 
 $i(n)=(n\mod 2\ell_3)-\ell_3$. Otherwise (namely, when \emph{almost$\_$cycle}$[n\mod (2\ell_3)]=0)$ the control value $i(n)$ was selected randomly from $\{0,1,\ldots,\ell_3-1\}$. A simple checking 
(which merely needs to take into account the case \emph{almost$\_$cycle}$[n\mod (2\ell_3)]=1$)  
shows that every index $j\in J=\{0,1,\ldots,\ell+2\}$ is selected at least once  
in any block  of nonnegative consecutive integers whose length is at least $3\ell_3$. 
Thus, this control is indeed almost cyclic with period $3\ell_3$.

For the perturbations, we constructed random vectors whose length 
is the right-hand side of \eqref{eq:b^nu}. The user could also choose to perform a 
calculation with zero perturbations. 

For the relaxation parameters, we either took $\lambda_n=\epsilon_1$ for all $n$, or 
 $\lambda_n=2-\epsilon_2$ for all $n$, or $\lambda_n=0.5(\epsilon_1+2-\epsilon_2)$ for all $n$, or 
 $\lambda_n$=a random number in the interval $[\epsilon_1,2-\epsilon_2]$ for all $n$.
 
\subsection{The computational results} The tables below describe the computational 
results. Here is a legend of abbreviation that are used: no.=the serial number of each experimental run of the algorithm; perturb=the perturbation terms were nonzero; 
ac=almost cyclic; c=cyclic; min numb. iter.=minimum number of iterations among 10 trials; max numb. iter.=maximum number of iterations among 10 trials; aver. numb. iter.=average number of iterations among 10 trials; feasible point: the feasible point obtained after the specified number of iterations in the minimum case; dim=dimension.

\begin{table}[htbp]
  \centering
  \caption{Two-sided 3D arrangement, $x_0=(4,3.853,4)$, $\rho=2.0318$}
 
     \begin{tabular}{|p{0.4cm}|l|l|l|p{0.9cm}|p{0.96cm}|p{0.9cm}|p{0.9cm}|p{0.9cm}|p{2.6cm}|}
    \toprule
    \textbf{no.}& \boldmath{}\textbf{$\epsilon_1$}\unboldmath{} & \boldmath{}\textbf{$\epsilon_2$}\unboldmath{} & \textbf{$\lambda_n$} & \textbf{control} & \textbf{perturb} & \textbf{min numb. iter.} & 
    \textbf{max numb. iter.} & \textbf{aver. numb. iter.} &\textbf{feasible point} \\
 
    \midrule
   1      & 0.303 & 0.57  & $1.43$ & ac    & no    & 84    & 2688  & 621.6 & $(-0.053,0.375,1.504)$ \\
    
      2     & 0.303 & 0.57  & $0.303$ & ac    & no    & 25788 & 26880 & 26342.4 & $(0.288,0.283,1.509)$ \\
    3      & 0.303 & 0.57  & random & c     & no    & 5404 & 5880 & 5656 & $(-0.030,0.403,1.509)$ \\
    4     & 0.303 & 0.57  & $0.88$ & c     & no    & 6104  & 6104  & 6104  & $(-0.003,0.404,1.509)$ \\
    5      & 0.303 & 0.57  & $1.43$ & c     & no    & 1764  & 1764  & 1764  &  $(-0.310,0.258,1.509)$ \\
    6      & 0.303 & 0.57  & $0.303$ & c     & no    & 25368 & 25368 & 25368 & $(0.263,0.306,1.509)$ \\
    7      & 0.303 & 0.57  & random & ac    & no    & 168   & 8064  & 6745.2 & $(0.340,0.193,1.508)$ \\
    8      & 0.303 & 0.57  & random & ac    & yes   & 7476  & 8316  & 7845.6 & $(0.338,0.221,1.509)$ \\
    9      & 0.303 & 0.57  & $1.43$ & ac    & yes   & 168   & 2688  & 1142.4 & $(0.029,0.143,1.503)$ \\
    10     & 0.303 & 0.57  & $0.88$ & c     & yes   & 6104  & 6104  & 6104  & $(-0.003,0.404,1.509)$ \\
    11     & 0.303 & 0.57  & $1.43$ & c     & yes   & 1764  & 1764  & 1764  & $(-0.31,0.258,1.509)$ \\
    12     & 0.303 & 0.57  & $0.303$ & c     & yes   & 25368 & 25368 & 25368 & $(0.264,0.306,1.509)$ \\
    13     & 1     & 1     & $1$ & c     & no    & 4676  & 4676  & 4676  & $(-0.090, 0.397, 1.509)$ \\
    14     & 1     & 1     & $1$ & c     & yes   & 4676  & 4704  & 4678.8 & $(-0.089,0.394,1.509)$ \\
    15     & 1     & 1     & $1$ & ac    & no    & 6804  & 7644  & 7341.6 & $(0.199,0.351,1.509)$ \\
    16     & 1     & 1     & random & ac    & yes   & 6804  & 7644  & 7257.6 & $(0.198,0.352,1.509)$ \\
    17     & 0.1   & 1.9   & $0.1$ & c     & no    & 84924 & 84924 & 84924 & $(0.285,0.286,1.509)$ \\
    18     & 0.1   & 1.9   & $0.1$ & c     & yes   & 84924 & 84924 & 84924 & $(0.285,0.286,1.509)$ \\
    19     & 0.01  & 1.99  & $0.01$ & c     & no    & 884772 & 884772 & 884772 & $(0.289,0.282,1.509)$ \\
    20     & 0.01  & 1.99  & $0.01$ & c     & yes   & 884772 & 884772 & 884772 & $(0.289,0.281,1.509)$ \\
    21     & 1.9   & 0.1   & $1.9$ & c    & no    & 168   & 168   & 168   & $(-0.051, 0.057, 1.498)$ \\
    22     & 1.9   & 0.1   & $1.9$ & c     & yes   & 168   & 168   & 168   & $(-0.051,0.057,1.498)$ \\
    23     & 1.99  & 0.01  & $1.99$ & c     & no    & 308   & 308   & 308   & $(-0.001, 0.001, 1.470)$ \\
    24     & 1.99  & 0.01  & $1.99$ & c     & yes   & 308   & 308   & 308   & $(0.000,0.000,1.470)$ \\
    25     & 1.95  & 0.01  & $1.95$ & c     & no    & 224   & 224   & 224   & $(-0.011, 0.013, 1.469)$ \\
    26     & 1.95  & 0.01  & $1.95$ & c     & yes   & 224   & 224   & 224   & $(-0.011,0.013,1.469)$ \\
    27     & 1.95  & 0.01  & $1.99$ & c     & no    & 252   & 252   & 252   & $(-0.004, 0.004, 1.484)$ \\
    28     & 1.95  & 0.01  & $1.99$ & c     & yes   & 308   & 308   & 308   & $(0.000,0.000,1.470)$ \\
    29     & 1.95  & 0.01  & $1.97$ & c     & no    & 252   & 252   & 252   & $(-0.004, 0.004, 1.485)$ \\
    30     & 1.95  & 0.01  & $1.97$ & c     & yes   & 252   & 252   & 252   & $(-0.004,0.004,1.484)$ \\
    31     & 1.95  & 0.01  & random & c     & no    & 252   & 280   & 254.8 & $(-0.623,0.747,0.730)$ \\
    32     & 1.99  & 0.01  & $1.99$ & ac    & no    & 168   & 504   & 302.4 & $(0.008,0.000,1.476)$ \\
    33     & 0.01  & 1.99  & $0.01$ & ac    & no    & 863016 & 883764 & 879018 & $(0.293,0.278,1.509)$ \\
    34     & 1.4   & 0.6   & $1.4$ & c     & no    & 1932  & 1932  & 1932  & $(-0.304, 0.265, 1.509)$ \\
    35     & 1.4   & 0.6   & $1.4$ & c     & yes   & 1932  & 1932  & 1932  & $(-0.304,0.265,1.509)$ \\
    36     & 0.6   & 1.4   & $0.6$ & c     & no    & 10752 & 10752 & 10752 & $(0.151,0.374,1.509)$ \\
    37     & 0.6   & 1.4   & $0.6$ & c     & yes   & 10752 & 10752 & 10752 & $(0.156,0.373,1.509)$ \\
    38     & 0.7   & 1.3   & $0.7$ & c     & no    & 8596  & 8596  & 8596  & $(0.097,0.392,1.509)$ \\
    39     & 1.95  & 0.05  & $1.95$ & c     & no    & 224   & 224   & 224   & $(-0.011, 0.013, 1.470)$ \\
    40     & 1.96  & 0.04  & $1.96$ & c     & no    & 252   & 252   & 252   & $(-0.007, 0.008, 1.513)$ \\
    41     & 2.02  & 0.1   & $2.02$ & c     & no    & 448   & 448   & 448   & $(0,0,1.473)$ \\
    42     & 2.02  & 1.4   & $2.02$ & c     & no    & 448   & 448   & 448   & $(0,0,1.473)$ \\
    43     & 2.02  & 1.4   & $2.02$ & ac    & no    & 84    & 504   & 289.3 & $(-0.123,0.140,1.494)$ \\
    44     & 1.4   & 1.4   & $1.4$ & c     & yes   & 1932  & 1932  & 1932  & $(-0.304,0.265,1.509)$ \\
    45     & 1.7   & 0.2   & $1.7$ & c     & yes   & 140   & 168   & 148.4 & $(-0.271,0.201,1.510)$ \\

    \bottomrule
    \end{tabular}%
  \label{table:3Dx0PositivemmRho}
\end{table}%

\begin{table}[htbp]
  \centering
  \caption{Two-sided 3D arrangement, $x_0=(-4,3.853,-4)$}
   \begin{tabular}{|p{0.4cm}|l|l|l|p{0.5cm}|p{0.9cm}|p{0.96cm}|p{0.9cm}|p{0.9cm}|p{0.9cm}|p{2.7cm}|}
  
    \toprule
    \textbf{no.}&{\boldmath{}\textbf{$\rho$}\unboldmath{}} & \boldmath{}\textbf{$\epsilon_1$}\unboldmath{} & \boldmath{}\textbf{$\epsilon_2$}\unboldmath{} & \textbf{$\lambda_n$} & \textbf{control} & \textbf{perturb} & \textbf{min numb. iter.} & 
    \textbf{max numb. iter.} & \textbf{aver. numb. iter.} &\textbf{feasible point} \\
  \midrule
  
   1     & 3     & 0.02  & 1.5   & $0.02$ & ac    & yes   & 17136 & 18228 & 17816.4 & $(-0.908,0.984,0.815)$ \\
   
    2     & 3     & 0.02  & 1.5   & $0.02$ & c     & no    & 17724 & 17724 & 17724 & $(-0.921, 0.986, 0.821)$ \\
    3     & 3     & 0.02  & 1.5   & $0.02$ & c     & yes   & 17724 & 17724 & 17724 & $(-0.925,0.983,0.821)$ \\
    4     & 3     & 0.7   & 1.5   & $0.7$ & c     & no    & 280   & 280   & 280   & $(-1.163, 0.998, 0.921)$ \\
    5     & 3     & 0.7   & 1.5   & $0.7$ & c     & yes   & 280   & 280   & 280   & $(-1.166,0.988,0.919)$ \\
    6     & 3     & 1.7   & 0.2   & $1.7$ & c     & no    & 28    & 28    & 28    & $(-0.448, 0.359, 0.567)$ \\
    7     & 3     & 1     & 1     & $1$ & c     & no    & 28    & 28    & 28    & $(-1.137, 1.098, 0.950)$ \\
    8     & 3     & 1     & 1     & $1$ & c     & yes   & 28    & 56    & 42    & $(-1.153,1.088,0.954)$ \\
    9     & 3     & 1.99  & 0.01  & $1.99$ & c     & yes   & 28    & 28    & 28    & $(-0.380,0.228,0.713)$ \\
    10    & 2.0318 & 1.7   & 0.2   & $1.7$ & c     & yes   & 140   & 140   & 140   & $(-0.103,0.080,1.472)$ \\
    11    & 2.0318 & 1.7   & 0.2   & $1.7$ & c     & no    & 112   & 112   & 112   & $(-0.104, 0.083, 1.473)$ \\
    12    & 2.0318 & 1.4   & 0.6   & $1.4$ & c     & no    & 1736  & 1736  & 1736  & $(-0.283, 0.288, 1.509)$ \\
    13    & 2.0318 & 1.4   & 0.6   & $1.4$ & c     & yes   & 1708  & 1764  & 1744.4 & $(-0.278,0.292,1.509)$ \\
    14    & 2.0318 & 1     & 1     & $1$ & c    & yes   & 4704  & 4704  & 4704  & $(-0.286,0.285,1.509)$ \\
    15    & 2.0318 & 1     & 1     & $1$ & c     & no    & 4704  & 4704  & 4704  & $(-0.290, 0.281, 1.509)$ \\
    16    & 2.0318 & 0.1   & 1.9   & $0.1$ & c     & no    & 84224 & 84224 & 84224 & $(-0.282, 0.289, 1.509)$ \\
    17    & 2.0318 & 0.1   & 1.9   & $0.1$ & c     & yes   & 84168 & 84280 & 84218.4 & $(-0.281,0.290,1.509)$ \\
    18    & 2.0318 & 1.9   & 1.9   & $1.9$ & c     & yes   & 168   & 196   & 170.8 & $(-0.014,0.006,1.504)$ \\
    19    & 2.0318 & 1.9   & 1.9   & $1.9$ & c     & no    & 168   & 168   & 168   & $(-0.022, 0.011, 1.477)$ \\
    20    & 2.0318 & 1.9   & 1     & $1.9$ & c     & no    & 168   & 168   & 168   & $(-0.022, 0.011, 1.477)$ \\
    21    & 1.5   & 1.9   & 0.1   & $1.9$ & c     & no    & $5\cdot 10^6$ & $5\cdot 10^6$ & $5\cdot 10^6$ & not found \\
    22    & 1     & 1.9   & 0.1   & $1.9$ & c     & no    & $5\cdot 10^6$ & $5\cdot 10^6$ & $5\cdot 10^6$ & not found \\

     \bottomrule
   
    \end{tabular}%
  \label{table:3Dx0Negative}%
\end{table}%

\begin{table}[htbp]
  \centering
  \caption{random configurations in various dimensions}
    \begin{tabular}{|l|l|l|p{0.4cm}|p{0.4cm}|p{1cm}|p{1cm}|p{0.9cm}|p{1.4cm}|p{1cm}|p{0.7cm}|}
    \toprule
    \textbf{no.}&{\boldmath{}\textbf{$\rho$}\unboldmath{}} & \boldmath{}\textbf{$\epsilon_1$}\unboldmath{} & \boldmath{}\textbf{$\epsilon_2$}\unboldmath{} & \textbf{$\lambda_n$} & \textbf{control} & \textbf{perturb} & \textbf{min numb. iter.} & 
    \textbf{max numb. iter.} & \textbf{aver. numb. iter.} &\textbf{dim} \\
    
    \midrule
     1     & 75    & 1.99  & 0.01  & $1.99$ & c     & yes   & 56    & 84    & 67.2  & 2500 \\
  
    2     & 180   & 1.99  & 0.01  & $1.99$ & c     & yes   & 0     & 0     & 0     & 2500 \\
    3     & 60    & 1.99  & 0.01  & $1.99$ & c     & yes   & 224   & 476   & 364 & 2500 \\
    4     & 59  & 1.99  & 0.01  & $1.99$ & c     & yes   & 336 & 1428  & 638.4  & 2500 \\
    5     & 59  & 1.99  & 0.01  & $1.99$ & c     & no    & 392  & 980  & 616  & 2500 \\
    6     & 13    & 1.99  & 0.01  & $1.99$ & c     & yes   & 84    & 280   & 128.8 & 100 \\
    7     & 13    & 1.5   & 0.4   & $1.5$ & c     & no    & 56    & 84    & 64.4  & 100 \\
    8     & 13    & 1.7   & 0.3   & $1.7$ & c     & no    & 84    & 140   & 95.2  & 100 \\
    9     & 40    & 1.6   & 0.4   & $1.6$ & c     & no    & 56    & 84    & 81.2  & 1000 \\
    10    & 50    & 1.9   & 0.1   & $1.9$ & c     & no    & 56    & 56    & 56    & 1000 \\
    11    & 50    & 1.9   & 0.1   & $1.9$ & c     & no    & 56    & 56    & 56    & 1000 \\
    12    & 40    & 1     & 1     & $1$ & c     & yes   & 1680  & 2660  & 2063.6 & 1000 \\
    13    & 3     & 1     & 1     & $1$ & c     & no    & 28    &  $5\cdot10^6$  & 500151.2 & 3 \\
    14    & 3     & 1     & 1     & $1$ & c     & yes   & 28    & 184996 & 18743.2 & 3 \\
    15    & 3     & 1     & 1     & $1$ & ac    & no    & 84    & 1344  & 294   & 3 \\
    16    & 3     & 1.99  & 0.01  & $1.99$ & c     & no    & 28    & 112   & 53.2  & 3 \\
    17    & 3     & 1.99  & 0.01  & $1.99$ & ac    & no    & 84    & 84    & 84    & 3 \\
    18    & 3     & 0.01  & 1.99  & $0.01$ & c     & no    & 28504 & 82852 & 46015.2 & 3 \\
    19    & 3     & 0.01  & 1.99  & $0.01$ & ac    & no    & 26544 & 112392 & 47292 & 3 \\
    20    & 2.0318 & 0.01  & 1.99  & $0.01$ & c     & no    & 863240 &  $5\cdot10^6$ & 2526148 & 3 \\
    21    & 2.0318 & 1.99  & 0.01  & $1.99$ & c     & no    & 56    & $5\cdot10^6$ & 1500210 & 3 \\

   \bottomrule
   
    \end{tabular}%
  \label{table:HigherDimensions}%
\end{table}%

\subsection{Discussion}
The results show  that usually the perturbation terms have little 
influence on both the number of iterations and the obtained feasible point  
(see, e.g, line 13 and beyond in Table \ref{table:3Dx0PositivemmRho}). However, 
sometimes it may have a certain influence, when combined with another 
source of randomization (e.g., the random almost cyclic control used in 
the simulations), as shown in lines 1,9 and 7-8 of Table \ref{table:3Dx0PositivemmRho}). 
In order to draw stronger conclusions, more simulations are needed.  

The relaxation parameters seem to contribute significantly to 
the speed of convergence: the greater they are, the faster the convergence, 
but this dependence is not purely monotone (lines 21-24 of Table  \ref{table:3Dx0PositivemmRho}). On the other hand, because of \eqref{eq:ActiveStep} one may 
expect that the greater the product $\epsilon_1\epsilon_2$, the faster the convergence, but at least in our setting this has not 
been observed. In this connection, an interesting and unexplained phenomenon is 
 described in lines 41-44 of Table \ref{table:3Dx0PositivemmRho}: we have  
$\epsilon_1+\epsilon_2>2$ but still the algorithm works. However, when we tried to take  $\epsilon_1\geq 2.1$ the program crashed. 

Regarding the control, sometimes (e.g., line 13 comparing to line 15 in 
Table  \ref{table:3Dx0PositivemmRho}) the cyclic control 
leads to faster convergence, but not always (line 1 comparing to line 5 in Table  \ref{table:3Dx0PositivemmRho}). From the comparisons of line 23 to line 32 
we see that the speed can also be quite similar. 
However, since the comparison was  limited (not only because of the number of 
simulations and the way the control was created, but also because we used a 
concrete type of zero-convex functions), and since we sometimes had some problems 
with the random number generator (and hence with the random  
vector generator), one has to be careful when drawing 
  conclusions regarding 
the advantage of one control over the other. 

In the higher dimensional version of the original 3D setting, the data in Table \ref{table:HigherDimensions} show that the algorithm works in this case too. This is 
of course not really surprising, in view of  Theorem \ref{thm:resiliency}, 
but still one has to be careful since in some rare cases \eqref{eq:wp_wa} 
can be violated (when the dimension grows usually 
$0.47=R-r<\|a_j-p\|$ even if each component of $a_j-p$ is very small). The  last lines 
of this table show that the algorithm 
works when the dimension is 3 and locations of the molecules are random   
(with the exception that we always took $p=(0,0,0)$ and $a_{\ell}=(0,0,3.5)$). 
The value $5\cdot 10^6$ that sometimes appear there means that no feasible point 
was found after $5\cdot 10^6$ iterations.

\section{Further discussion}\label{sec:FurtherDiscussion}
This section concludes the paper with further discussion of certain issues. 
In Subsection \ref{subsec:different} we discuss the possibility of inner perturbations.  
In Subsection \ref{subsec:Compare} we compare briefly the SSP approach for solving 
the CFP to other possible optimization approaches. In Subsection  \ref{subsec:ApproximateMinimization} we explain how the results of this paper can be used for approximate minimization. Finally, in Subsection \ref{subsec:Conclude} we mention several open problems and lines for further investigation. 

\subsection{Two alternative presentations of perturbation
resilience\label{subsec:different}}

The perturbation resilience result established in Theorem \ref{thm:resiliency}
above looks different in nature than the results described in
\cite{ButnariuDavidiHermanKazantsev,CensorDavidiHerman,CDHST2013,DavidiPhD,HermanDavidi}. 
There the perturbed iterative step was of the form 
\begin{equation}
x_{n+1}=A_{n}(x_{n}+b_{n}) \label{eq:x}%
\end{equation}
for some sequence of perturbation vectors $b_{n}$ and a sequence of
algorithmic operators $A_{n}:\mathbb{R}^{d}\rightarrow\mathbb{R}^{d}$. That
format enabled the creation of \textit{superiorized algorithms} that use the
perturbations proactively in order to achieve an additional aim while being
guaranteed that the original convergence of the algorithm is preserved. 
In contrast, in 
\eqref{eq:iterative}, at least when $\Omega=H$, the perturbed iterative step
has the form
\begin{equation}
x_{n+1}=A_{n}(x_{n})+\widetilde{b}_{n} \label{eq:super}%
\end{equation}
where
\begin{equation}
\widetilde{b}_{n}:=\left\{
\begin{array}
[c]{ll}%
b_{n}, & \text{if }g_{i(n)}(x_{n})>0,\\
0, & \text{if }g_{i(n)}(x_{n})\leq0.
\end{array}
\right.  \label{eq:equ-1}%
\end{equation}
In this form the perturbations express the computational (numerical) error
resulting from a non-ideal computation of $A_{n}(x_{n})$. However, it is
possible to obtain a convergence result in the spirit of (\ref{eq:x}) by
modifying an argument which appears in \cite[p. 541]%
{ButnariuDavidiHermanKazantsev}. Indeed, define the sequence of operators
$A_{n}: H\to H$
\begin{equation}
A_{n}(x):=\left\{
\begin{array}
[c]{ll}%
x-\lambda_{n}\displaystyle{\frac{g_{i(n)}(x)}{\Vert t_{n}\Vert^{2}}}t_{n}, &
\text{if }g_{i(n)}(x)>0,\\
x, & \text{if }g_{i(n)}(x)\leq0,
\end{array}
\right.  \label{eq:equ-2}%
\end{equation}
and a new algorithmic sequence of vectors%
\begin{equation}
\left\{
\begin{array}
[c]{l}%
z_{0}=A_{0}(x_{0}),\\
z_{n+1}=A_{n+1}(z_{n}+\widetilde{b}_{n}).
\end{array}
\right.  \label{eq:equ-3}%
\end{equation}
Using this notation we obtain the following proposition.

\begin{proposition}
\label{prop:InternalPerturbations} Let $\{x_{n}\}_{n=0}^{\infty}$ be a sequence
in $\Omega:=H$, generated by \eqref{eq:super}, and let $\{z_{n}\}_{n=0}^{\infty}$
be the sequence generated by (\ref{eq:equ-3}) with $\widetilde{b}_{n}$ and
$A_{n}(x)$ defined as in (\ref{eq:equ-1}) and (\ref{eq:equ-2}), respectively.
Suppose that $\{b_{n}\}_{n=1}^{\infty}$ is a sequence in $H$ satisfying
$\lim_{n\to\infty} b_{n}=0$. If $\{x_{n}\}_{n=0}^{\infty}$ converges weakly to
some $x_{\ast}$, then also $\{z_{n}\}_{n=0}^{\infty}$ converges weakly to
$x_{\ast}$ and vice versa. If $\{x_{n}\}_{n=0}^{\infty}$ converges strongly,
then $\{z_{n}\}_{n=0}^{\infty}$ converges strongly to the same limit and vice versa.
\end{proposition}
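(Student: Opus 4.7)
The plan is to exploit an explicit algebraic relation between the two sequences. A direct computation based on the defining formulas shows that $z_n$ and $x_{n+1}$ differ only by a single perturbation term. Specifically, I would establish by induction on $n$ the identity
\begin{equation}
z_n = x_{n+1} - \widetilde{b}_n, \qquad n\in\mathbb{N}\cup\{0\}.
\end{equation}
For the base case $n=0$ this follows at once from $z_0=A_0(x_0)$ and $x_1=A_0(x_0)+\widetilde{b}_0$. For the inductive step, assuming the identity holds at stage $n$, we have $z_n+\widetilde{b}_n = x_{n+1}$, hence
\begin{equation}
z_{n+1} = A_{n+1}(z_n+\widetilde{b}_n)=A_{n+1}(x_{n+1})=x_{n+2}-\widetilde{b}_{n+1},
\end{equation}
where the last equality uses the definition of $\{x_n\}$ in \eqref{eq:super}.

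Once this identity is in place, the rest is essentially immediate. From \eqref{eq:equ-1} we have $\|\widetilde{b}_n\|\leq\|b_n\|\to 0$, so the identity $z_n - x_{n+1} = -\widetilde{b}_n$ implies that $z_n - x_{n+1}\to 0$ in norm. Therefore, for any continuous linear functional $\phi$ on $H$, $\phi(z_n)-\phi(x_{n+1})\to 0$, which immediately yields the equivalence of weak convergence of $\{z_n\}$ and of $\{x_{n+1}\}$ (and hence of $\{x_n\}$, since tail behavior is unaffected by a single index shift) to the same limit $x_*$. Likewise, strong convergence of one sequence implies strong convergence of the other to the same limit by the triangle inequality $\|z_n-x_*\|\leq\|x_{n+1}-x_*\|+\|\widetilde{b}_n\|$ and the symmetric inequality.

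I do not anticipate any serious obstacle: the whole argument hinges on correctly unraveling the recursions \eqref{eq:super} and \eqref{eq:equ-3} to produce the identity $z_n=x_{n+1}-\widetilde{b}_n$. The only point requiring a bit of care is bookkeeping the index shift (so that weak/strong convergence of $\{x_{n+1}\}_{n\geq 0}$ is correctly identified with that of $\{x_n\}_{n\geq 0}$) and noting that no assumption on continuity of the operators $A_n$ is needed, since the identity is exact rather than asymptotic.
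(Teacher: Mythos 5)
Your proof is correct and follows essentially the same route as the paper: both establish the identity $x_{n+1}=z_{n}+\widetilde{b}_{n}$ by induction and then transfer weak/strong convergence using $\Vert\widetilde{b}_{n}\Vert\leq\Vert b_{n}\Vert\to 0$. Your write-up merely spells out the induction that the paper leaves implicit.
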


\begin{proof}
It follows, by induction, that%
\begin{equation}
x_{n+1}=z_{n}+\widetilde{b}_{n},\quad\forall n\in\mathbb{N}\cup\{0\}.
\label{eq:xnznbn}%
\end{equation}
Since $\lim_{n\to\infty} b_{n}=0$ we have $\lim_{n\rightarrow\infty}\Vert
b_{n}\Vert=0$. Thus $\lim_{n\rightarrow\infty}\Vert\widetilde{b}_{n}\Vert=0$.
Since $x_{\ast}$ is the weak limit of the sequence $\{x_{n}\}_{n=0}^{\infty}$ it
follows from \eqref{eq:xnznbn} that the weak $\lim_{n\rightarrow\infty}z_{n}$
exists and equals $x_{\ast}$. A similar reason implies that if $\{x_{n}\}_{n=0}^{\infty}$ 
converges strongly, then $\{z_{n}\}_{n=0}^{\infty}$ converges 
strongly to the same limit. Finally, the reverse directions, namely the
convergence of $\{x_{n}\}_{n=0}^{\infty}$ from the convergence of 
$\{z_{n}\}_{n=0}^{\infty}$, hold by the same reasoning.
\end{proof}

Since under the conditions of Theorem  \ref{thm:resiliency} any sequence
$\{x_{n}\}_{n=0}^{\infty}$, generated by Algorithm \ref{alg:perturbed-csp}, 
converges (weakly or strongly) to a point in the feasible set, then so does the
sequence $\{z_{n}\}_{n=0}^{\infty}$, generated by (\ref{eq:equ-3}). Thus, Theorem  
\ref{thm:resiliency} and Proposition  \ref{prop:InternalPerturbations} can be
used in the superiorization methodology by allowing the algorithmic sequence
to have the form defined in either \eqref{eq:super} or \eqref{eq:equ-3}.

\subsection{Comparison with other methods for solving the CFP}\label{subsec:Compare}
A possible way to solve the CFP is to formulate it as a minimization problem. 
For example, one can define a function $f:\Omega\to\R$ by $f(x):=\max\{\sup_{j\in J}g_{j}(x),0\}$ and solve the problem 
\begin{equation}\label{eq:min_f}
\min_{x\in\Omega}f(x)
\end{equation}
which has an optimal value $0$, given that the CFP is feasible (i.e., $C$ from \eqref{eq:nonempty} is nonempty). One may 
use many of the known methods to solve the above optimization problem,
e.g., the usual subgradient descent methods. However, these methods 
require the functions $g_j$ to be convex (so that $f$ will be convex), while 
in Algorithm  \ref{alg:perturbed-csp} we allow the functions $g_j$ to be zero-convex 
(in \cite{SolodovZavriev1998} the target function $f$ may be nonconvex, but 
no convergence to the optimal value is proved unless $f$ is 
strongly convex, and additional assumptions are needed in the analysis). 
In addition, each iteration in Algorithm  \ref{alg:perturbed-csp} (in 
\eqref{eq:iterative}) depends only on one function $g_j$, while in \eqref{eq:min_f} 
each iteration depends on all the functions due to the definition of $f$. 
This dependence makes each iteration computationally demanding when many functions 
are involved. In addition, the convergence result described in Theorem  \ref{thm:resiliency} 
holds in a quite general setting, while in the case of \eqref{eq:min_f}, if 
for instance one allows perturbations, then some restrictions are imposed 
(e.g., the underlying  $\Omega$ should be compact or the function $f$ 
should have a set of sharp minima \cite{NedicBertsekas2010}). On the other hand, 
in the case of \eqref{eq:min_f} one may have convergence even if the 
problem is not feasible, while we do not 
know what happens in this case for the sequence generated by 
Algorithm  \ref{alg:perturbed-csp}.

\subsection{Approximate minimization}\label{subsec:ApproximateMinimization}
The results of this paper can be used for approximate 
minimization of a quasiconvex function $f:\Omega\to \R$ on $C=\bigcap_{j\in J} g_j^{\leq 0}$.  
More precisely, assume that  $\alpha\in \R$ 
is a known upper bound on $\inf_C f$ and that we want to find an $\alpha$-approximate minimizer  of $f$, that is, a point $x\in C$ satisfying $f(x)\leq \alpha$. 
Denote $g_{-1}:=f-\alpha$ (still quasiconvex and hence 0-convex),  and assume that all the 
assumptions of Theorem \ref{thm:resiliency} are satisfied with respect to the functions 
$g_j, j\in J\bigcup\{-1\}$. Assume also that $-1\notin J$. Apply Algorithm \ref{alg:perturbed-csp} with these functions. 
Theorem \ref{thm:resiliency} ensures that we will obtain a point $x$ belonging to the set  
$C\bigcap g_{-1}^{\leq 0}$, that is, a point $x\in C$ satisfying $f(x)\leq \alpha$, 
as required. The above generalizes \cite[Corollary 6.11(i)]{BauschkeCombettes2001} 
from the setting of approximate minimization of a convex function using 
the SSP without perturbations.

\subsection{Open questions and issues for future investigation}\label{subsec:Conclude} 
We conclude the paper by listing several open questions and lines 
for further investigation. 

Regarding weakening Theorem \ref{thm:resiliency}, we ask if 
Condition \ref{cond:boundedness} can be removed,  
for instance, when the growth of $\|t_n\|$ is not too large. 
Second, can the weak convergence  be extended to strong convergence without 
the assumption that the interior of $F$ is nonempty? Third, can the assumption \eqref{eq:control} on the control be relaxed to random (repetitive) controls? 
or at least can it be modified to other controls such as the 
most violated constraint control? In this connection, it may also be interesting 
to say something about the growth rate of the sequence $\{L_j\}_{j\in J}$ 
from \eqref{eq:control} when $J$ is infinite (see also \cite{CensorChenPajoohesh2011}).  

Another question is to obtain explicit error estimates for 
the speed of convergence in Theorem \ref{thm:resiliency}. It is not so easy to find such 
explicit estimates in many closely related theorems in the literature (theorems in 
which Fej\'er monotonicity is used for proving convergence), and unfortunately, 
so far this is true also regarding Theorem \ref{thm:resiliency}.  However, if 
one imposes additional assumptions, then it seems reasonable 
to believe that actually such explicit estimates (in fact, strong convergence in 
a linear rate) can be obtained. This belief is based on analogous results 
in the literature (for projection algorithms) in the case where the 
subsets $C_j$, $j\in J$ ($J$ is finite) are boundedly linearly regular 
(in particular, hyperplanes) \cite[Sections 5-7]{bb96}, or 
 certain affine subspaces \cite[Theorem 5.7.8]{BBL1997}, or a  
 Slater-type condition is satisfied and the control is almost cyclic 
 \cite[Theorem 7.18]{bb96}, \cite[Theorem 2]{DePierroIusem1988}. 

A different approach to the question of explicit estimates is to follow the analysis in 
\cite{Zaslavski2013a,Zaslavski2013b} in which one does not obtain a convergence 
result but rather obtains explicit time complexity estimates for 
approximate solutions. More precisely, given a tolerance parameter $\epsilon>0$ 
and an upper bound $\delta>0$ on the perturbations, one finds explicitly 
an iteration index $k_0$ and a point $x_{k_0}\in H$ such that $g_j(x_{k_0})\leq \epsilon$ 
for all $j$, under certain assumptions on the setting 
(e.g., there are finitely many convex and Lipschitz functions $g_j$ and  
the control is cyclic). In this case it may 
happen that $x_{k_0}$ is located far away from the intersection $C=\bigcap_{j\in J}C_j$, 
but perhaps under some additional assumptions on the subsets $C_j$, e.g., that there exists 
$\Delta\in (0,1]$ such that $\{x\in H|\,\, g_j(x)\leq \Delta,\,\,\,\forall j\in J\}\neq\emptyset$ (a Slater-type condition) 
and that $C$ is bounded, one can also find an explicit upper bound for 
$d(x_{k_0},C)$ as done in \cite[Section 6]{Zaslavski2013a}. 
The closely related analysis given in \cite{DePierroIusem1988}, which preceded 
\cite{Zaslavski2013a,Zaslavski2013b}, seems to help too in this direction.

The computational results of Section \ref{sec:ComputationalResults}, and, in particular, the  
improvement in the speed of convergence when the relaxation parameters grow, deserve an explanation. 
An intuitive and incomplete explanation of this phenomenon is the 
geometric interpretation of the algorithm which is closely related to Remark  \ref{rem:Geometric0-Convex}  and Figure \ref{fig:0ConvHyperplane}. 

It may be of interest to study further the notion of zero-convexity in various ways. 
One possibility is to follow the path of many works related to quasiconvex 
programming or generalized convexity, e.g.,  \cite{ADSZ2010,CambiniMartein2009-book,CM-LV-1998-handbook,HKS2005-handbook,Martinez-Legaz1988}, and in particular to study notions of duality in this context. 
Another possibility is to consider spaces which are more general than Hilbert spaces. 
As said after Definition \ref{def:0subdifferential}, the notion of zero-convexity can be generalized almost 
word for word to arbitrary normed spaces and beyond. This fact and the analysis of 
the proof of Theorem \ref{thm:resiliency} cause us to believe that 
 (perhaps slight variations of) this theorem  hold in the case where the setting is certain Banach spaces  (uniformly convex Banach spaces having a weakly continuous duality mapping),  certain Bregman distances (thus generalizing \cite{Kiwiel1998}), and certain Riemannian manifolds 
 (thus generalizing \cite{BentoMelo2012}). Indeed, the proof of Theorem \ref{thm:resiliency} is constructed 
in such a way that the assumption that $H$ is Hilbert does not appear in too 	
many places and at least in some places where it  appears there are more general results in 
the literature which can be used, as noted after Lemma \ref{lem:BrowderOpialBB}. 

In addition to generalizations of the above type, we believe that the notion of 
zero-convexity can be modified (and be useful) so it will cover zero-level-sets composed of 
a disjoint union of closed and convex subsets, and also to certain $\beta$-level sets 
instead of just $0$-level sets. 

Finally, it would be interesting to consider algorithmic schemes different from 
Algorithm \ref{alg:perturbed-csp} that will not be anymore sequential, but 
rather mixed or parallel (taking into account blocks, strings, weighted sums), and also 
to obtain results in the infeasible case (where the intersection $C$ from \eqref{eq:nonempty} is empty).

\begin{acknowledgements}
We thank Luba Tetruashvili  for many 
helpful comments. We thank Benar Svaiter for helpful 
discussions and suggestions, in particular, for his ideas regarding the
alternative proof of Proposition  \ref{prop:0ConvCharacterize}%
\eqref{item:LevelSet} mentioned in Remark  \ref{rem:ConevxZeroLevelImplyConvex}. 
Thanks are also due to Jefferson Melo, Alfredo Iusem, Simeon Reich, Alex Segal, 
and Mikhail Solodov  for helpful remarks, in particular regarding some of the references, 
and to Claudia Sagastiz\'abal for an helpful discussion on some general aspects of the paper. 
Finally, we greatly appreciate the constructive and insightful comments of
three anonymous reviewers and the Associate Editor which helped
us improve the paper.
This research was supported by the United States-Israel Binational Science Foundation 
(BSF) grant number 200912, by the US Department of Army award number W81XWH-10-1-0170, 
and by a special postdoc fellowship from IMPA.\bigskip\ 
\end{acknowledgements}



\end{document}